\crefname{hypothesis}{Hypothesis}{Hypotheses}
\title{A Local Discontinuous Galerkin approximation\\ for the \lowercase{$p$}-Navier--Stokes system,\\ Part I: Convergence analysis\thanks{Submitted to the editors \today.
}}
\author{Alex Kaltenbach\thanks{Department of Applied Mathematics, University of Freiburg, Ernst--Zermelo-Straße 1, D-79104 Freiburg, GERMANY. (\email{alex.kaltenbach@mathematik.uni-freiburg.de}).}
	\and Michael R\r{U}\v{Z}I\v{C}KA\thanks{Department of Applied Mathematics, University of Freiburg, Ernst--Zermelo-Straße 1, D-79104 Freiburg, GERMANY.
		(\email{rose@mathematik.uni-freiburg.de}).}}
\definecolor{rltred}{rgb}{0.75,0,0}
\definecolor{rltgreen}{rgb}{0,0.5,0}
\definecolor{rltblue}{rgb}{0,0,0.75}
\providecommand{\meantmp}[2]{#1\langle{#2}#1\rangle}
\providecommand{\mean}[1]{\meantmp{}{#1}}
\providecommand{\jumptmp}[2]{#1\llbracket{#2}#1\rrbracket}
\providecommand{\jump}[1]{\jumptmp{}{#1}}
\providecommand{\avgtmp}[2]{#1\{{#2}#1\}}
\providecommand{\avg}[1]{\avgtmp{}{#1}}
\providecommand{\bigavg}[1]{\avgtmp{\big}{#1}}
\providecommand{\flux}[1]{{\widehat{#1}}}
\providecommand{\PiDG}{{\Uppi_{h}^{k}}}
\providecommand{\PiDGn}{{\Uppi_{h_n}^{k}}}
\providecommand{\Pia}{{\Uppi_h^{0}}}
\providecommand{\Pian}{{\Uppi_{h_n}^{0}}}
\providecommand{\Vo}{\mathaccent23 V}
\providecommand{\Qo}{\mathaccent23 Q}
\providecommand{\Xhk}{\smash{X_h^k}}
\providecommand{\Vhk}{\smash{V_h^k}}
\providecommand{\Qhk}{\smash{Q_h^k}}
\providecommand{\Qhkc}{\smash{Q_{h,c}^k}}
\providecommand{\Qhko}{{\mathaccent23 Q}_h^k}
\providecommand{\Qhkco}{{\mathaccent23 Q}_{h,c}^k}
\providecommand{\Vhkc}{\smash{V_{h,c}^k}}
\providecommand{\Vhkco}{\mathaccent23 V_{h,c}^k}
\providecommand{\Vhnk}{\smash{V_{h_n}^k}}
\providecommand{\Qhnkco}{{\mathaccent23 Q}_{h_n,c}^k}
\providecommand{\SSS}{\boldsymbol{\mathcal{S}}}
\newcommand{\Ghk}{\boldsymbol{\mathcal{G}}_h^k}
\newcommand{\Ghnk}{\boldsymbol{\mathcal{G}}_{h_n}^k\! }
\newcommand{\Dhk}{\boldsymbol{\mathcal{D}}_h^k}
\newcommand{\Dhnk}{\boldsymbol{\mathcal{D}}_{h_n}^k}
\newcommand{\Divhk}{\mathcal{D}\dot{\iota}\nu_h^k}
\newcommand{\Divhnk}{\mathcal{D}\dot{\iota}\nu_{h_n}^k}
\newcommand{\Rhk}{\boldsymbol{\mathcal{R}}_h^k}
\newcommand{\Rhnk}{\boldsymbol{\mathcal{R}}_{h_n}^{\smash{k,{\textup{sym}}}}}
\newcommand{\Rhks}{\boldsymbol{\mathcal{R}}_h^{\smash{k,{\textup{sym}}}}}
\newcommand{\WDG}{W^{1,p}(\mathcal{T}_h)}
\newcommand{\WDGn}{W^{1,p}(\mathcal{T}_{h_n})}
\providecommand{\divo}{\textrm{div}\,}
\providecommand{\sss}{\avg{\abs{\Pia \bfL_h^{\textup{sym}}}}}
\providecommand{\sssn}{\avg{\abs{\Pian \!\bfL_{h_n}^{\textup{sym}}}}}
\providecommand{\sssl}{\avg{\abs{\Pia (\Dhk \bfv_h +\Rhks\bfv^*)}}}
\providecommand{\ssslo}{\avg{\abs{\Pia (\Dhk \bfu_h + [\bfH_h^k\bfv^*]^{\textup{sym}})}}}
\begin{document}

\maketitle

\begin{abstract}
   In the present paper, we propose a Local Discontinuous Galerkin
  (LDG) approximation for fully non-homogeneous systems of $p$-Navier--Stokes type.
  On the basis~of~the~primal~formulation, we
  prove well-posedness, stability (\textit{a priori}
  estimates), and weak convergence of the method. To this end, we propose a new DG discretization of the
  convective term and develop an abstract~non-conforming theory of
  pseudo-monotonicity, which is applied to our problem. We also use our approach to treat the $p$-Stokes problem. 
\end{abstract}

\begin{keywords}
    discontinuous Galerkin, $p$-Navier--Stokes system, weak convergence
\end{keywords}

\begin{MSCcodes}
    76A05, 35Q35, 65N30, 65N12, 65N15   
\end{MSCcodes}

\section{Introduction}

We consider the numerical approximation of steady systems~of $p$-Navier--Stokes type, i.e., 
\begin{equation}
  \label{eq:p-navier-stokes}
  \begin{aligned}
    -\divo\SSS(\bfD\bfv)+[\nabla\bfv]\bfv+\nabla q&=\bfg -\divo \bfG \qquad&&\text{in }\Omega\,,\\
    \divo\bfv&=g \qquad&&\text{in }\Omega\,,
    \\
    \bfv &= \bfv_0 &&\text{on } \partial\Omega\,,
  \end{aligned}
\end{equation}
using a Local Discontinuous Galerkin (LDG) scheme. The physical
problem motivating this study is the steady motion of a
homogeneous, incompressible~fluid~with~shear-dependent viscosity. More
precisely, for a given vector field $\bfg\hspace{-0.15em}\colon\hspace{-0.15em}\Omega\hspace{-0.15em}\to\hspace{-0.15em} \mathbb{R}^d$~and~a~given~tensor field $\bfG\hspace{-0.1em}\colon\hspace{-0.1em}\Omega\hspace{-0.1em}\to\hspace{-0.1em} \mathbb{R}^{d\times d}$, jointly
describing external body forces, as well as a given divergence
$g\hspace{-0.1em}\colon\hspace{-0.1em}\Omega\hspace{-0.1em}\to\hspace{-0.1em} \mathbb{R}$, and a given Dirichlet boundary condition
$\bfv_0\colon\partial\Omega\to \mathbb{R}^d$, we~seek~for~a~velocity vector
field ${\bfv\hspace{-0.1em}=\hspace{-0.1em}(v_1,\dots,v_d)^\top\hspace{-0.1em}\colon\hspace{-0.1em}\Omega\hspace{-0.1em}\to\hspace{-0.1em}
  \mathbb{R}^d}$~and~a~kinematic~\mbox{pressure}~${q\hspace{-0.1em}\colon\hspace{-0.1em}\Omega\hspace{-0.1em}\to\hspace{-0.1em} \mathbb{R}}$~\mbox{solving}~\eqref{eq:p-navier-stokes}.  Here,
$\Omega\hspace{-0.1em}\subseteq\hspace{-0.1em} \mathbb{R}^d$, $d\hspace{-0.1em}\in\hspace{-0.1em} \{2,3\}$, is a bounded polyhedral
domain having~a~Lipschitz~\mbox{continuous} boundary $\partial\Omega$. The
extra stress tensor $\SSS(\bfD\bfv)\hspace{-0.1em}\colon\hspace{-0.1em}\Omega\hspace{-0.1em}\to\hspace{-0.1em} \mathbb{R}^{d\times d}$
depends on the~strain~rate~tensor
$\smash{\bfD\bfv\hspace{-0.1em}\coloneqq \hspace{-0.1em}\frac{1}{2}(\nabla
  \bfv+\nabla\bfv^\top)\hspace{-0.1em}\colon\hspace{-0.1em}\Omega\hspace{-0.1em}\to\hspace{-0.1em} \mathbb{R}^{d\times d}_{\textup{sym}}}$,
i.e., the symmetric part~of~the~\mbox{velocity}~\mbox{gradient} tensor $\bfL\hspace{-0.1em}\coloneqq \hspace{-0.1em}\nabla \bfv=(\partial_j v_i)_{i,j=1,\dots,d}\colon\Omega\to \mathbb{R}^{d\times
  d}$. The convective term $\smash{[\nabla\bfv]\bfv\colon\Omega\to
  \mathbb{R}^d}$~is defined via
$([\nabla\bfv]\bfv)_i\hspace{-0.1em}\coloneqq \hspace{-0.1em}\smash{\sum_{j=1}^d{v_j\partial_j
    v_i}}$ for all $i\hspace{-0.1em}=\hspace{-0.1em}1,\dots,d$.  The prescribed~Dirichlet~boundary
condition ${\bfv_0\colon\partial\Omega\to \mathbb{R}^d}$ and the prescribed
divergence $g\colon\Omega\to
\mathbb{R}$~have~to~satisfy
\begin{align}
    \int_{\Omega}{g\,\textup{d}x}=\int_{\partial\Omega}{\bfv_0\cdot\bfn\,\textup{d}s}\,,\label{eq:compatibility}
\end{align}
where $\bfn\colon\partial\Omega\to \smash{\mathbb{S}^{d-1}}$ denotes the unit normal vector field to $\Omega$ pointing outwards.
Physical interpretation and discussion of some non-Newtonian fluid models~can~be~found,~e.g., in \cite{bird,mrr,ma-ra-model}.

Throughout the paper, we  assume that the extra stress tensor~$\SSS$~has~\mbox{$(p,\delta)$-structure} (cf.~Assumption~\ref{assum:extra_stress}). The relevant example falling into this class is 
\begin{align*}
    \SSS(\bfD\bfv)=\mu\, (\delta+\vert \bfD\bfv\vert)^{p-2}\bfD\bfv\,,
\end{align*}
where $p\in (1,\infty)$, $\delta\ge 0$, and $\mu>0$.\newpage

Mathematical investigation of fluids with shear-dependent viscosities
started with the celebrated work of O. Ladyzhenskaya
(cf.~\cite{lady-bo}). In recent years, there has been enormous
progress in the understanding of this problem, and we refer the~reader~to \cite{mnrr,ma-ra-model,hugo-boundary,hugo-petr-rose} and the references
therein for a detailed discussion.

\enlargethispage{4mm}%

Introducing the unknowns $\bfL,\bfS ,\bfK\colon\Omega\to
\mathbb{R}^{d\times d}$, the system \eqref{eq:p-navier-stokes} can be
re-written as a ``first order'' system, i.e., \\[-3mm]
\begin{equation}
  \label{eq:p-navier-stokes-DG}
  \begin{aligned}
    \bfL=\nabla\bfv,\quad \bfS  = \SSS(\bfL^{\textup{sym}}),\quad \bfK&=\bfv\otimes\bfv\qquad&&\text{in }\Omega\,,\\-\divo \bfS +\smash{\tfrac{1}{2}}\divo\bfK+\smash{\tfrac{1}{2}}(\bfL-g\mathbf{I}_d)\bfv +\nabla q&=\bfg -\divo \bfG \qquad&&\text{in }\Omega\,,\\
   \divo\bfv&=g \qquad&&\text{in }\Omega\,,
    \\
    \bfv &= \bfv_0 &&\text{on } \partial\Omega\,,\\[-1.5mm]
  \end{aligned}
\end{equation}
where we used in \eqref{eq:p-navier-stokes-DG}$_2$ that
$[\nabla \bfv]\bfv=\frac{1}{2}\divo(\bfv\otimes
\bfv)+\frac{1}{2}(\nabla \bfv-\divo\bfv\,\mathbf{I}_d)\bfv$ and
\eqref{eq:p-navier-stokes}$_{1,2}$. {Note that this is a
  modification of the standard Temam modification of the convective
  term taking into account that the prescribed divergence is
  non-trivial.}  Discontinuous Galerkin (DG) methods for elliptic
problems have been introduced in the late 90's. They are by now
well-understood and rigorously analyzed in the context of linear
elliptic problems (cf.~\cite{arnold-brezzi} for the Poisson
problem).~In~contrast~to~this, only few papers treat $p$-Laplace type
problems using DG methods
(cf.~\cite{ern-p-laplace,BufOrt09,dkrt-ldg,CS16,sip,QS19}), or
non-conforming methods (cf.~\cite{Bar21}). There are several
investigations of $p$-Stokes problems using FE methods
(cf.~\cite{San1993,BL1994b,BL1993a,bdr-phi-stokes,Hi13a}) 
and using DG
methods (cf.~\cite{bustinza-fluid,CHSW13,GS15,BCPH20}). Several papers
treat the Navier--Stokes problem ($p=2$) by means of DG methods. We
refer to \cite{CKS05,CKS09,NPC11,ern,ern-book,CCQ17} to name a few.
In addition,~the~convergence~analysis for the $p$-Navier--Stokes
problem using FE methods can be found in
\cite{die-sueli-2013,KS16,KPS18,KS19,FGS22}.  On the other hand, to
the best of the authors knowledge, there are no investigations using
DG methods for the $p$-Navier--Stokes problem
\eqref{eq:p-navier-stokes} {and it is desirable to fill this
  lacuna. This motivated us to develop an abstract framework for the
  convergence analysis of DG formulations
  of~\eqref{eq:p-navier-stokes} and related problems. This framework
  extends the quasi non-conforming setting from
  \cite{alex-rose-nonconform} to the fully non-conforming setting
  (cf.~\cite{kr-unsteady-dg} for the evolutionary case). We exemplify
  the potential of this abstract framework on an equal order LDG
  scheme of~\eqref{eq:p-navier-stokes} with discontinuous velocity and
  continuous pressure for low regularity data from $(W^{1,p}_0(\Omega))^*$
  and prove the first
  convergence for a DG~formulation of~\eqref{eq:p-navier-stokes}. On
  the other hand, the framework is general enough to be used for other
  DG schemes, which exploit the advantages of DG methods as
  e.g.~flexibility of the local polynomial degree, flexibility of the
  meshes, possibility of low order methods, enforcing the divergence
  constraint exactly with low order elements, or using discontinuous
  pressure. Moreover, using hybridisation DG methods become
  competitive in terms of computational costs. None of these aspects is
  discussed in this paper, but is left for future work.}%
In Part I of the paper, we develop an abstract theory
for steady non-conforming pseudo-monotone problems and propose a new DG
discretization of the convective term. Moreover, we present easily
verifiable sufficient conditions, which ensure the
applicability of the abstract theory. Finally, we apply this theory to the
$p$-Navier--Stokes and~the $p$-Stokes problem. For the former, we
restrict ourselves to the case $p\in (2,\infty)$,
since~we~deal~with~fully non-homogeneous boundary conditions, while
for the latter one, all $p\in (1,\infty)$~are~covered. {In~Part~II of
the paper, we prove convergence rates for the velocity of the
homogeneous $p$-Navier--Stokes~problem and the homogeneous
$p$-Stokes~problem. In particular, we prove optimal linear convergence
rates for the velocity of an LDG scheme with only linear ansatz functions, which is
not possible with conforming inf-sup stable FE methods in general
shape-regular meshes. Part III of the paper provides convergence
rates for the pressure of the same problems. In particular, we prove linear convergence
rates for the pressure~of~the~same~LDG~scheme.}

\textit{This paper is organized as follows:} \!In Section
\ref{sec:preliminaries}, we introduce the employed~\mbox{notation}, define
 relevant function spaces, 
 basic assumptions on the extra stress~tensor~$\SSS$~and~its consequences,
 weak formulations in Problem (Q) and Problem (P) of the~system~\eqref{eq:p-navier-stokes}, and discrete operators.
 In Section \ref{sec:ldg}, we define our numerical
fluxes~and~derive~the flux \hspace{-0.1mm}and \hspace{-0.1mm}the \hspace{-0.1mm}primal
\hspace{-0.1mm}formulation, \hspace{-0.1mm}i.e, \hspace{-0.1mm}Problem~\hspace{-0.1mm}(Q$_h$) \hspace{-0.1mm}and \hspace{-0.1mm}Problem~\hspace{-0.1mm}(P$_h$),~\hspace{-0.1mm}of~\hspace{-0.1mm}the~\hspace{-0.1mm}\mbox{system}~\hspace{-0.1mm}\eqref{eq:p-navier-stokes}.  In Section \ref{sec:non-conform}, we
introduce a general concept of non-conforming approximations and
non-conforming pseudo-monotonicity.  In Section \ref{sec:application},
we prove the existence~of~DG~solutions
(cf.~Proposition~\ref{prop:well_posed}), the stability of the method,
i.e., a priori estimates (cf.~Proposition~\ref{prop:stab}), and the
convergence~of~DG solutions (cf.~Theorem~\ref{thm:convergence}, Theorem~\ref{thm:convergence.1}). Theorem~\ref{thm:convergence} 
is the first (weak) convergence result for an LDG method for steady systems of \mbox{$p$-Navier--Stokes type.} In Section~\ref{sec:experiments},
we confirm our theoretical findings via numerical experiments.

\section{Preliminaries}\label{sec:preliminaries}

\subsection{Function spaces}

We employ $c, C>0$ to denote generic constants,~that~may change from line
to line, but are not depending on the crucial quantities.~Moreover,~we write $f\sim g$ if and only if there exists constants $c,C>0$ such
that $c\, f \le g\le C\, f$.

For $k\in \setN$ and $p\in [1,\infty]$, we employ the customary
Lebesgue spaces $(L^p(\Omega), \smash{\norm{\cdot}_p}) $ and Sobolev
spaces $(W^{k,p}(\Omega), \smash{\norm{\cdot}_{k,p}})$, where $\Omega
\subseteq \setR^d$, $d\in \{2,3\}$, is a bounded,~polyhedral Lipschitz domain. The space $\smash{W^{1,p}_0(\Omega)}$
is defined as those functions from $W^{1,p}(\Omega)$ whose trace vanishes on $\partial\Omega$. We equip
$\smash{W^{1,p}_0(\Omega)}$ 
with the norm $\smash{\norm{\nabla\,\cdot\,}_p}$. 

We do not distinguish between function spaces for scalar,
vector-~or~\mbox{tensor-valued} functions. However, we denote
vector-valued functions by boldface letters~and~tensor-valued
functions by capital boldface letters. The standard scalar product
between~two vectors is denoted by $\bfa \cdot\bfb$, while the
  Frobenius scalar product between~two~tensors is denoted by
  $\bfA: \bfB$.  The mean value of a locally integrable function $f$
  over a measurable set $M\subseteq \Omega$ is denoted by
  ${\mean{f}_M\coloneqq \smash{\dashint_M f
      \,\textup{d}x}\coloneqq \smash{\frac 1 {|M|}\int_M f
      \,\textup{d}x}}$. Moreover, we employ the notation
  $\hskp{f}{g}\coloneqq \int_\Omega f g\,\textup{d}x$, whenever the
  right-hand~side~is~\mbox{well-defined}.

  We will also use Orlicz and Sobolev--Orlicz spaces
  (cf.~\cite{ren-rao}).~A~real~convex~\mbox{function}
  $\psi \colon \mathbb{R}^{\geq 0} \to \mathbb{R}^{\geq 0}$ is said to be
  an {N-function}, if it holds $\psi(0)=0$,
  $\psi(t)>0$~for~all~${t>0}$, $\lim_{t\rightarrow0} \psi(t)/t=0$, and
  $\lim_{t\rightarrow\infty} \psi(t)/t=\infty$. 
  We~define the (convex) \mbox{conjugate N-func-} tion $\psi^*$ by
  ${\psi^*(t)\hspace*{-0.1em}\coloneqq \hspace*{-0.1em} \sup_{s \geq
      0} (st \hspace*{-0.1em}- \hspace*{-0.1em}\psi(s))}$~for all
  ${t \!\geq \!0}$, which satisfies
  $(\psi^*)'\! =\!  (\psi')^{-1}\!\!$. A given N-function
  $\psi$~satisfies~the~\mbox{$\Delta_2$-condition} (in short,
  $\psi \hspace*{-0.1em}\in\hspace*{-0.1em}\Delta_2$), if there exists
  $K\hspace*{-0.1em}>\hspace*{-0.1em} 2$ such that ${\psi(2\,t) \hspace*{-0.1em}\leq\hspace*{-0.1em} K\,
    \psi(t)}$ for all
  $t \hspace*{-0.1em}\ge\hspace*{-0.1em}
  0$. We denote the smallest such constant by
  $\Delta_2(\psi)\hspace*{-0.1em}>\hspace*{-0.1em}0$.  We need the
  following version of the $\varepsilon$-Young inequality: for every
  ${\varepsilon> 0}$,~there~exists a constant $c_\epsilon>0 $,
  depending  on $\Delta_2(\psi),\Delta_2( \psi ^*)<\infty$, such
  that~for~every~${s,t\geq 0}$,~it~holds
\begin{align}
  \label{ineq:young}
    t\,s&\leq \epsilon \, \psi(t)+ c_\epsilon \,\psi^*(s)\,.
\end{align}

\subsection{Basic properties of the extra stress tensor}

Throughout the paper, we will assume that the extra stress tensor $\SSS$ has $(p,\delta)$-structure, which will be~defined~now. A detailed discussion and full proofs can be found in \cite{die-ett,dr-nafsa}. For a tensor $\bfA\in \mathbb{R}^{d\times d}$, we denote its symmetric part by ${\bfA^{\textup{sym}}\coloneqq \frac{1}{2}(\bfA+\bfA^\top)\in \mathbb{R}^{d\times d}_{\textup{sym}}\coloneqq \{\bfA\in \mathbb{R}^{d\times d}\mid \bfA=\bfA^\top\}}$.

For $p \in (1,\infty)$ and~$\delta\ge 0$, we define a special N-function
$\phi=\phi_{p,\delta}\colon\smash{\mathbb{R}^{\ge 0}\to \mathbb{R}^{\ge 0}}$~by
\begin{align} 
  \label{eq:5} 
  \varphi(t)\coloneqq  \int _0^t \varphi'(s)\, \mathrm ds\,,\quad\text{where}\quad
  \varphi'(t) \coloneqq  (\delta +t)^{p-2} t\quad\textup{ for all }t\ge 0\,.
\end{align}
It is well-known that $\phi$ is balanced
(cf.~\cite{dr-nafsa,br-multiple-approx}), since $ {\min\set{1,p-1}\,( \delta+t)^{p-2} \le \varphi''(t)}$ $\leq
\max\set{1,p-1}( \delta+t)^{p-2}$ for~all~${t,\delta\ge 0}$. Moreover, $\phi$ satisfies, independent~of~${\delta\ge  0}$, the
$\Delta_2$-condition with $\Delta_2(\phi) \leq c\, 2^{\max \set{2,p}}$. 
Apart from that, the conjugate function $\phi^*$ satisfies, independent of ${\delta\ge  0}$, the
$\Delta_2$-condition with $\Delta_2(\phi^*) \leq c\,2^{\max \set{2,p'}}$~as~well~as ${\phi^*(t) \sim
(\delta^{p-1} + t)^{p'-2} t^2}$ uniformly in $t\ge 0$, where $\smash{1= \frac{1}{p} +
\frac{1}{p'}}$.

An important tool in our analysis are shifted N-functions
$\set{\psi_a}_{\smash{a \ge 0}}$  (cf.~\cite{DK08,dr-nafsa}). For a given N-function $\psi\colon\mathbb{R}^{\ge 0}\to \mathbb{R}^{\ge
  0}$, we define the family  of shifted N-functions ${\psi_a\colon\mathbb{R}^{\ge
    0}\to \mathbb{R}^{\ge 0}}$,~${a \ge 0}$, for every $a\ge 0$, via
\begin{align}
  \label{eq:phi_shifted}
  \psi_a(t)\coloneqq  \int _0^t \psi_a'(s)\, \mathrm ds\,,\quad\text{where }\quad
  \psi'_a(t)\coloneqq \psi'(a+t)\frac {t}{a+t}\,,\quad\textup{ for all }t\ge 0\,.
\end{align}
Shifted N-functions have the following important property: if
$\psi,\psi^* \in \Delta_2$, then for every $\varepsilon \in (0,1)$, there exists a
constant $c_\varepsilon=c_\varepsilon( \Delta_2(\psi))>0$ such that for~all~${a,b,t \ge 0}$,~it~holds
\begin{align}
  \label{eq:shift}
  \psi_a(t) \le c_\varepsilon\, \psi_b(t) +\varepsilon\, \psi_b(\abs{b-a})\,.
\end{align}

\begin{remark} \label{rem:phi_a}
  {\rm 
    For the above defined N-function $\varphi $ we~have, uniformly in $a,t\!\ge\! 0$, that
    $\phi_a(t) \sim (\delta+a+t)^{p-2} t^2$~and $(\phi_a)^*(t)
    \sim ((\delta+a)^{p-1} + t)^{p'-2} t^2$.  The families
    $\set{\phi_a}_{\smash{a \ge 0}}$~and $\set{(\phi_a)^*}_{\smash{a \ge 0}}$ satisfy, uniformly in $a \ge 0$,
    the $\Delta_2$-condition~with~$\Delta_2(\phi_a) \leq c\, 2^{\smash{\max \set{2,p}}}$ and
    $\Delta_2((\phi_a)^*) \leq c\, 2^{\smash{\max \set{2,p'}}}$,
    respectively. Moreover, note that $(\phi_{p,\delta})_a(t)=\phi_{p,
      \delta+a}(t)$ for all $t,a,\delta\ge 0$, and that the function
    $(a \mapsto \phi_a(t))\colon\mathbb{R}^{\ge 0}\to\mathbb{R}^{\ge 0} $ for every $t\ge 0$~is~non-decreasing for $p\ge 2$ and non-increasing for $p\le 2$.
    }
  \end{remark}

%
\begin{assumption}[Extra stress tensor]\label{assum:extra_stress}
  We assume that the extra stress tensor \linebreak$ \SSS\hspace{-0.15em}\colon\hspace{-0.15em} \mathbb{R}^{d \times d}
  \hspace{-0.15em}\to\hspace{-0.15em} \mathbb{R}^{d \times d}_{\textup{sym}} $ belongs to $C^0(\mathbb{R}^{d \times
    d}\hspace{-0.1em},\mathbb{R}^{d \times d}_{\textup{sym}} ) $, satisfies $\SSS (\hspace{-0.05em}\bfA\hspace{-0.05em}) \hspace{-0.15em}=\hspace{-0.15em} \SSS 
  (\hspace{-0.05em}\bfA^{\textup{sym}}\hspace{-0.05em} )$~for~all~${\bfA\hspace{-0.15em}\in\hspace{-0.15em} \mathbb{R}^{d \times d}}\hspace{-0.1em}$, and $\SSS (\mathbf 0)=\mathbf 0$. Furthermore, we
  assume that the tensor $\SSS$ has {\rm $(p, \delta)$-structure}, i.e.,
  for some $p \in (1, \infty)$, $ \delta\in [0,\infty)$, and the
  N-function $\varphi=\varphi_{p,\delta}$ (cf.~\eqref{eq:5}), there
  exist constants $C_0, C_1 >0$ such that
   \begin{equation}
     \label{eq:ass_S}
     \begin{aligned}
       \big({\SSS}(\bfA) - {\SSS}(\bfB)\big) : \big(\bfA-\bfB
       \big) &\ge C_0 \,\phi_{\abs{\bfA^{\textup{sym}}}}(\abs{\bfA^{\textup{sym}} -
         \bfB^{\textup{sym}}}) \,,
       \\
       \abs{\SSS(\bfA) - \SSS(\bfB)} &\le C_1 \,
       \phi'_{\abs{\bfA^{\textup{sym}}}}\big(\abs{\bfA^{\textup{sym}} -
         \bfB^{\textup{sym}}}\big)
     \end{aligned}
   \end{equation}
   are satisfied for all $\bfA,\bfB \in \mathbb{R}^{d \times d} $.  The
   constants $C_0,C_1>0$ and $p\in (1,\infty)$ are called the 
     characteristics of $\SSS$.
\end{assumption}

\begin{remark}
  {\rm (i) Let $\phi$ be defined in \eqref{eq:5} and let
    $\{\phi_a\}_{a\ge 0}$ be the corresponding family of the~shifted \mbox{N-functions}. Then, the operators 
    $\SSS_a\colon\mathbb{R}^{d\times d}\to \smash{\mathbb{R}_{\textup{sym}}^{d\times
      d}}$, $a \ge 0$, defined, for every $a \ge 0$
    and~$\bfA \in \mathbb{R}^{d\times d}$, via
\begin{align}
  \label{eq:flux}
  \SSS_a(\bfA) \coloneqq 
  \frac{\phi_a'(\abs{\bfA^{\textup{sym}}})}{\abs{\bfA^{\textup{sym}}}}\,
  \bfA^{\textup{sym}}\,, 
\end{align}
have $(p, \delta +a)$-structure (cf.~Remark~\ref{rem:phi_a}).  In this case, the characteristics of
$\SSS_a$ depend only on $p\in (1,\infty)$ and are independent of
$\delta \geq 0$ and $a\ge 0$.

{(ii) Note that the $(p,\delta)$-structure contains as particular cases, e.g.,~the Ladyzhenskaya model, the Smagorinski model, power law models, the
Carreau--Yasuda model and the  Powell--Eyring model. 
  }}
\end{remark}

\subsection{The $p$-Navier--Stokes system} 
Let us briefly recall some well-known~facts about the $p$-Navier--Stokes system \eqref{eq:p-navier-stokes}. For $p\in (1,\infty)$, we define the function spaces
 \begin{align*}
     \begin{aligned}
     V&\coloneqq (W^{1,p}(\Omega))^d\,,&&\Vo\coloneqq (W^{1,p}_0(\Omega))^d\,,\\
     Q&\coloneqq L^{p'}(\Omega)\,,&&\Qo\coloneqq L_0^{p'}(\Omega)\coloneqq \big\{f\in L^{p'}(\Omega)\;|\;\mean{f}_\Omega=0\big\}\,.
     \end{aligned}
 \end{align*}
    With this notation, the weak formulation of problem \eqref{eq:p-navier-stokes} is the following.
    
\textbf{Problem (Q).} For given $(\bfg,\bfG,g,\bfv_0)^\top\!\in\!
L^{p'}(\Omega)\times L^{p'}(\Omega)\times
L^p(\Omega)\times W^{\smash{1-\frac{1}{p},p}}(\partial\Omega)$ with
\eqref{eq:compatibility}, find $(\bfv,q)\in V\times \Qo$ such that
$\bfv=\bfv_0$ in $L^p(\partial\Omega)$ and for all $(\bfz,z)^\top\in
\Vo\times Q $, it holds 
\begin{align}\label{eq:q1}
    (\SSS(\bfD\bfv),\bfD\bfz)+([\nabla\bfv]\bfv,\bfz)-(q,\divo\bfz)&=(\bfg,\bfz)+(\bfG,\nabla\bfz)\,,\\
    (\divo\bfv,z)&=(g,z)\label{eq:q2}\,.
\end{align}

Alternatively, we can reformulate Problem (Q) ``hiding'' the pressure.

\textbf{Problem (P).} For given $(\bfg,\bfG,g,\bfv_0)^\top\!\in\! L^{p'}(\Omega)\times L^{p'}(\Omega)\times L^p(\Omega)\times W^{\smash{1-\frac{1}{p},p}}(\partial\Omega)$ with \eqref{eq:compatibility}, find $\bfv\in V(g)$ with $\bfv=\bfv_0$ in $L^p(\partial\Omega)$
such~that~for~all~${\bfz\in \Vo(0)}$, it holds
\begin{align}\label{eq:p}
    (\SSS(\bfD\bfv),\bfD\bfz)+([\nabla\bfv]\bfv,\bfz)&=(\bfg,\bfz)+(\bfG,\nabla\bfz)\,,
\end{align}
where for every $f\in L^p_0(\Omega)$
\begin{align*}
    V(f)\coloneqq \{\bfz\in V\mid \divo \bfz=f\}\,,\quad \Vo(f)\coloneqq V(f)\cap \Vo\,.
\end{align*}

The notions ``Problem (Q)'' and ``Problem (P)'' are traditional, cf.~\cite{BF1991}. Note that $\Vo(f)\hspace{-0.17em}\neq\hspace{-0.17em} \emptyset$ \hspace{-0.1mm}for \hspace{-0.1mm}all \hspace{-0.1mm}$f\hspace{-0.17em}\in\hspace{-0.17em} L^p_0(\Omega)$ \hspace{-0.1mm}by \hspace{-0.1mm}the \hspace{-0.1mm}solvability \hspace{-0.1mm}of \hspace{-0.1mm}the \hspace{-0.1mm}divergence \hspace{-0.1mm}equation~\hspace{-0.1mm}(cf.~\hspace{-0.1mm}\mbox{\cite[\hspace{-0.15em}Thm.~\hspace{-0.15em}6.6]{john}}). This and the theory of pseudo-monotone operators  yield the existence of a weak solution of Problem (P) if $p> 2$, cf.~\cite[Theorem 2.27]{r-mol-inhomo}. DeRham's lemma, the solvability of the divergence equation, and the negative norm theorem, then, ensure the solvability~of Problem (Q).

\subsection{DG spaces, jumps and averages}\label{sec:dg-space}

\subsubsection{Triangulations}

\!In \hspace{-0.1mm}what \hspace{-0.1mm}follows, \hspace{-0.1mm}we \hspace{-0.1mm}always \hspace{-0.1mm}denote \hspace{-0.1mm}by \hspace{-0.1mm}$\mathcal{T}_h$, \hspace{-0.1mm}$h\!>\!0$,~\hspace{-0.1mm}a~\hspace{-0.1mm}\mbox{family}~\hspace{-0.1mm}of uniformly \hspace{-0.1mm}shape \hspace{-0.1mm}regular \hspace{-0.1mm}and \hspace{-0.1mm}conforming \hspace{-0.1mm}triangulations \hspace{-0.1mm}of~\hspace{-0.1mm}${\Omega\hspace{-0.15em}\subseteq\hspace{-0.15em} \mathbb{R}^d}$,~${d\hspace{-0.15em}\in\hspace{-0.15em} \set{2,3}}$,~\hspace{-0.1mm}cf.~\hspace{-0.1mm}\cite{BS08},~\hspace{-0.1mm}each consisting of \mbox{$d$-dimensional} simplices $K$.
The parameter $h>0$, refers to the maximal mesh-size of $\mathcal{T}_h$, i.e., if we define $h_K\coloneqq \textup{diam}(K)$ for all $K\in \mathcal{T}_h$,~then~${h\coloneqq \max_{K\in \mathcal{T}_h}{h_K}}$.
For simplicity, we always assume  that  $h \le 1$.
 For a simplex $K\! \in\! \mathcal{T}_h$,
we denote by $\rho_K\!>\!0$, the supremum of diameters~of~inscribed~balls. We assume that there~is~a~constant~$\omega_0\!>\!0$, independent
of $h>0$, such that ${h_K}{\rho_K^{-1}}\le
\omega_0$ for every $K \in \mathcal{T}_h$. The smallest such constant~is~called~the~chunkiness~of $(\mathcal{T}_h)_{h>0}$. 
By $\Gamma_h^{i}$, we denote the~interior faces and put $\Gamma_h\coloneqq  \Gamma_h^{i}\cup \partial\Omega$.
We assume~that~each~simplex~${K \in \mathcal{T}_h}$ has at most one~face~from~$\partial\Omega$.  We introduce the following scalar products~on~$\Gamma_h$
\begin{align*}
  \skp{f}{g}_{\Gamma_h} \coloneqq  \smash{\sum_{\gamma \in \Gamma_h} {\langle f, g\rangle_\gamma}}\,,\quad\text{ where }\quad\langle f, g\rangle_\gamma\coloneqq \int_\gamma f g \,\textup{d}s\quad\text{ for all }\gamma\in \Gamma_h\,,
\end{align*}
if all the integrals are well-defined. Similarly, we define the products 
$\skp{\cdot}{\cdot}_{\partial\Omega}$ and~$\skp{\cdot}{\cdot}_{\Gamma_h^{i}}$. 

\subsubsection{Broken function spaces and projectors}

\!For every $m \hspace{-0.1em}\in\hspace{-0.1em} \setN_0$~and~${K\hspace{-0.1em}\in\hspace{-0.1em} \mathcal{T}_h}$,
we denote by ${\mathcal P}_m(K)$, the space of
polynomials of degree at most $m$ on $K$. Then, for
given~$p\in (1,\infty)$ and $k \in \setN_0$,
we define the spaces
\begin{align}
  \begin{split}
    Q_h^k&\coloneqq \big\{ q_h\in L^1(\Omega)\,\mid q_h|_K\in \mathcal{P}_k(K)\text{ for all }K\in \mathcal{T}_h\big\}\,,\\
    V_h^k&\coloneqq \big\{\bfv_h\in L^1(\Omega)^d\,\mid \bfv_h|_K\in \mathcal{P}_k(K)^d\text{ for all }K\in \mathcal{T}_h\big\}\,,\\
    X_h^k&\coloneqq \big\{\bfX_h\in L^1(\Omega) ^{d \times d}\,\mid \bfX_h|_K\in \mathcal{P}_k(K) ^{d \times d}\text{ for all }K\in \mathcal{T}_h\big\}\,,\\
        W^{1,p}(\mathcal T_h)&\coloneqq \big\{\bfw_h\in L^1(\Omega)^d\mid \bfw_h|_K\in W^{1,p}(K)^d\text{ for all }K\in \mathcal{T}_h\big\}\,.
  \end{split}\label{eq:2.19}
\end{align}
In addition, for given $k \in \setN_0$, we define $\Qhkc\coloneqq  Q_h^k\cap C^0(\overline{\Omega})$ and $\Vhkc\coloneqq  V_h^k\cap C^0(\overline{\Omega})$.
Note that  $W^{1,p}(\Omega)\subseteq \WDG$ and
$V_h^k\subseteq \WDG$. We denote by ${\PiDG:L^1(\Omega)\to V_h^k}$, the (local)
$L^2$--projection into $V_h^k$, which for every $\bfv \hspace{-0.1em}\in\hspace{-0.1em}
\smash{L^1(\Omega)}$ and $\bfz_h\hspace{-0.1em}
\in \hspace{-0.1em}V_h^k$ is~defined~via
\begin{align}
  \label{eq:PiDG}
  \bighskp{\PiDG \bfv}{\bfz_h}=\hskp{\bfv}{\bfz_h}\,.
\end{align}
Analogously, we define the (local)
$L^2$--projection into $X_h^k$, i.e., ${\PiDG\colon L^1(\Omega) \to \Xhk}$.

For  every $\bfw_h\hspace{-0.1em}\in\hspace{-0.1em} \WDG$, we denote by $\nabla_h \bfw_h\hspace{-0.1em}\in\hspace{-0.1em} L^p(\Omega)$,
the \textbf{local gradient},~defined via
$(\nabla_h \bfw_h)|_K\hspace{-0.1em}\coloneqq \hspace{-0.1em}\nabla(\bfw_h|_K)$ for~all~${K\hspace{-0.1em}\in\hspace{-0.1em}\mathcal{T}_h}$.
For every $K\hspace{-0.1em}\in\hspace{-0.1em} \mathcal{T}_h$, ${\bfw_h\hspace{-0.1em}\in\hspace{-0.1em} \WDG}$~admits~an
interior~trace~${\textrm{tr}^K(\bfw_h)\in L^p(\partial K)}$. For each face
$\gamma\in \Gamma_h$ of a given simplex $K\in \mathcal{T}_h$, we define this
interior trace by
$\smash{\textup{tr}^K_\gamma(\bfw_h)\in L^p(\gamma)}$. Then, for
 every $\bfw_h\in \WDG$ and interior faces $\gamma\in \Gamma_h^{i}$ shared by
adjacent elements $K^-_\gamma, K^+_\gamma\in \mathcal{T}_h$,
we~denote~by
\begin{align}
  \{\bfw_h\}_\gamma&\coloneqq \smash{\frac{1}{2}}\big(\textup{tr}_\gamma^{K^+}(\bfw_h)+
  \textup{tr}_\gamma^{K^-}(\bfw_h)\big)\in
  L^p(\gamma)\,, \label{2.20}\\
  \llbracket\bfw_h\otimes\bfn\rrbracket_\gamma
  &\coloneqq \textup{tr}_\gamma^{K^+}(\bfw_h)\otimes\bfn^+_\gamma+
    \textup{tr}_\gamma^{K^-}(\bfw_h)\otimes\bfn_\gamma^- 
    \in L^p(\gamma)\,,\label{eq:2.21}
\end{align}
the \textbf{average} and \textbf{normal jump}, resp., of $\bfw_h$ on $\gamma$.
    Moreover,  for every $\bfw_h\in \WDG$ and boundary faces $\gamma\!\in\! \partial\Omega$, we define boundary averages and 
    boundary~jumps,~resp.,~via
    \begin{align}
      \{\bfw_h\}_\gamma&\coloneqq \textup{tr}^\Omega_\gamma(\bfw_h) \in L^p(\gamma)\,,\label{eq:2.23a} \\
      \llbracket \bfw_h\otimes\bfn\rrbracket_\gamma&\coloneqq 
      \textup{tr}^\Omega_\gamma(\bfw_h)\otimes\bfn \in L^p(\gamma)\,,\label{eq:2.23} 
    \end{align}
    where $\bfn\colon\partial\Omega\to \mathbb{S}^{d-1}$ denotes the unit normal vector field to $\Omega$ pointing outward. 
    Analogously, we
    define $\{\bfX_h\}_\gamma$ and $ \llbracket\bfX_h\bfn\rrbracket_\gamma
    $~for all $\bfX_h \in \Xhk$ and $\gamma\in \Gamma_h$. Furthermore, if there is no
    danger~of~confusion, we will omit the index $\gamma\in \Gamma_h$,~in~particular,  if we interpret jumps and averages as global functions defined on the whole of $\Gamma_h$.

\subsubsection{DG gradient and jump operators}

For every $k\in \mathbb{N}_0$ and  face $\gamma\in \Gamma_h$, we define the
\textbf{(local)~jump~\mbox{operator}}
${\boldsymbol{\mathcal{R}}_{h,\gamma}^k \colon\WDG \to X_h^k}$~for~every~${\bfw_h\in \smash{\WDG}}$ (using
Riesz representation)~via
\begin{align}
  \big(\boldsymbol{\mathcal{R}}_{h,\gamma}^k\bfw_h,\bfX_h\big)
  \coloneqq \big\langle \llbracket\bfw_h\otimes\bfn\rrbracket_\gamma,\{\bfX_h\}_\gamma\big\rangle_\gamma
  \quad\text{ for all }\bfX_h\in X_h^k\,.\label{eq:2.25}
\end{align}
For every $k\in \mathbb{N}_0$, the \textbf{(global) jump operator} $\smash{\Rhk\coloneqq \sum_{\gamma\in \Gamma_h}{\boldsymbol{\mathcal{R}}_{\gamma,h}^k}\colon\WDG \to X_h^k}$, 
by definition, for every $\bfw_h\in \smash{\WDG}$ and  $\bfX_h\in X_h^k$ satisfies
\begin{align}
  \big(\Rhk\bfw_h,\bfX_h\big)=\big\langle
  \llbracket\bfw_h\otimes\bfn\rrbracket,\{\bfX_h\}\big\rangle_{\Gamma_h}\,.\label{eq:2.25.1}
\end{align}
In addition, for every $k\in \mathbb{N}_0$, the \textbf{DG gradient operator} 
$  \Ghk\colon\WDG\to L^p(\Omega)$ is  defined, for every $\bfw_h\in \smash{\WDG}$, via
\begin{align}
  \boldsymbol{\mathcal G}^k_{h}\bfw_h\coloneqq 
  \nabla_h\bfw_h-\boldsymbol{\mathcal R}^k_h\bfw_h
  \quad\text{ in }L^p(\Omega)\,.\label{eq:DGnablaR} 
\end{align}
In particular, for every $\bfw_h\in \smash{\WDG}$ and $\bfX_h\in X_h^k$, we have that 
\begin{align}
\big(\Ghk\bfw_h,\bfX_h\big)=(\nabla_h\bfw_h,\bfX_h)
  -\big\langle \llbracket
  \bfw_h\otimes\bfn\rrbracket,\{\bfX_h\}\big\rangle_{\Gamma_h}
\,.  \label{eq:DGnablaR1}
\end{align}
Note that for every $\bfv\in \Vo$, we have that $\Ghk \bfv=\nabla\bfv
$ in $L^p(\Omega)$.
Apart from that, for every $\bfw_h\in \WDG$, we  introduce the \textbf{DG norm} via
\begin{align}
    \|\bfw_h\|_{\nabla,p,h}\coloneqq \|\nabla_h\bfw_h\|_p+h^{\frac{1}{p}}\big\|h^{-1}\jump{\bfw_h\otimes \bfn}\big\|_{p,\Gamma_h}\,,
\end{align}
which turns $\WDG$ into a Banach space\footnote{The completeness of $\WDG$ equipped with $\|\cdot\|_{\nabla,p,h}$, for every fixed $h>0$, follows from ${\|\bfw_h\|_p\leq c\,\|\bfw_h\|_{\nabla,p,h}}$ for all $\bfw_h\in \smash{\WDG}$ (cf.~\cite[Lemma A.9]{dkrt-ldg}) and an element-wise application of the trace theorem.\vspace{-1cm}}. Owing to \cite[{(A.26)--(A.28)}]{dkrt-ldg}, there exists a constant $c>0$ such that $\bfw_h\in \smash{\WDG}$, it holds
\begin{align}\label{eq:eqiv0}
    c^{-1}\,\|\bfw_h\|_{\nabla,p,h}\leq \big\|\Ghnk\bfw_h\big\|_p+h^{\frac{1}{p}}\big\|h^{-1}\jump{\bfw_h\otimes \bfn}\big\|_{p,\Gamma_h}\leq c\,\|\bfw_h\|_{\nabla,p,h}\,.
\end{align}

\subsubsection{Symmetric \hspace{-0.1mm}DG \hspace{-0.1mm}gradient \hspace{-0.1mm}and \hspace{-0.1mm}symmetric \hspace{-0.1mm}jump \hspace{-0.1mm}operators}

\!\!\mbox{Following} \cite[Sec. 4.1.1]{BCPH20}, we define a symmetrized
version~of~the~DG~gradient.~For~${\bfw_h\hspace{-0.1em}\in\hspace{-0.1em}
  \WDG}$, we denote by
$\bfD_h\bfw_h\coloneqq [\nabla_h\bfw_h]^{\textup{sym}}\in
L^p(\Omega;\mathbb{R}^{d\times
  d}_{\textup{sym}})$,~the~\mbox{\textbf{local~symmetric~gradient}}.\linebreak
In addition, for every $k\in \setN_0$ and
$\smash{X_h^{\smash{k,\textup{sym}}}\coloneqq X_h^k\cap
  L^p(\Omega;\mathbb{R}^{d\times d}_{\textup{sym}})}$, we define the
\textbf{symmetric DG gradient
  operator}~$ \smash{\Dhk\colon\WDG\to L^p(\Omega;\mathbb{R}^{d\times
    d}_{\textup{sym}})}$ 
via
$\smash{\Dhk\bfw_h\coloneqq [\Ghk\bfw_h]^{\textup{sym}}}
  $, i.e., if we
introduce the \textbf{symmetric jump {operator}}
$\smash{\Rhks\colon\WDG\to X_h^{\smash{k,\textup{sym}}}}$~via
$\smash{\Rhks\bfw_h\coloneqq [\Rhk\bfw_h]^{\textup{sym}}\in
  X_h^{\smash{k,\textup{sym}}}}$, for every ${\bfw_h\in
  \WDG}$, we have that
	\begin{align}\label{eq:defD}
	\smash{\Dhk\bfw_h =\bfD_h\bfw_h
	-\Rhks\bfw_h
	\quad\text{ in }L^p(\Omega;\mathbb{R}^{d\times d}_{\textup{sym}})\,.}
	\end{align}
	In particular, for every $\bfw_h\in \WDG$ and $\bfX_h\in X_h^{k,\textup{sym}}$, we have that
	\begin{align}
		\smash{\big(\Dhk\bfw_h,\bfX_h\big)
		=(\bfD_h\bfw_h,\bfX_h)
		-\big\langle \llbracket \bfw_h\otimes\bfn\rrbracket,\{\bfX_h\}\big\rangle_{\Gamma_h}\,.}\label{eq:2.24}
	\end{align}
	Apart from that, for every $\bfw_h\in \WDG$, we  introduce the \textbf{symmetric DG norm}~via 
	\begin{align}
		\smash{\|\bfw_h\|_{\bfD,p,h}\coloneqq \|\bfD_h\bfw_h\|_p
		+\smash{h^{\frac{1}{p}}\big\|  h^{-1} \llbracket\bfw_h\otimes\bfn\rrbracket\big\|_{p,\Gamma_h}}}\,.\label{eq:2.29}
	\end{align}
	The following discrete Korn inequality on $V_h^k$ shows that $\|\cdot\|_{\bfD,p,h}\sim\|\cdot\|_{\nabla,p,h}$ on $V_h^k$ 
	and, thus, forms a cornerstone of the numerical  analysis of the $p$-Navier--Stokes~system~\eqref{eq:p-navier-stokes}.
	 
	 \begin{proposition}[Discrete Korn inequality]\label{korn}
	    For every $p\in (1,\infty)$ and $k\in \setN$, there
		exists a constant~${c_{\mathbf{Korn}}>0}$ such that 
		for every $\bfv_h\in V_h^k$, it holds
		\begin{align}
		\smash{\|\bfv_h\|_{\nabla,p,h}\leq c_{\mathbf{Korn}}\,\|\bfv_h\|_{\bfD,p,h}}\label{eq:equi1}\,.
		\end{align}
	\end{proposition}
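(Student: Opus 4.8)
The plan is to reduce \eqref{eq:equi1} to a \emph{broken Korn inequality} for piecewise polynomials and then to transfer the latter to a conforming vector field. Observe first that the facet contributions $h^{\frac1p}\big\|h^{-1}\jump{\bfv_h\otimes\bfn}\big\|_{p,\Gamma_h}$ occurring in $\|\cdot\|_{\nabla,p,h}$ and in $\|\cdot\|_{\bfD,p,h}$ are \emph{identical}, and that $\vert\bfD_h\bfv_h\vert\le\vert\nabla_h\bfv_h\vert$ pointwise (so that the reverse bound $\|\bfv_h\|_{\bfD,p,h}\le\|\bfv_h\|_{\nabla,p,h}$ is trivial and not needed here). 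Hence it suffices to show
\begin{align*}
  \|\nabla_h\bfv_h\|_p\le c\,\big(\|\bfD_h\bfv_h\|_p+h^{\tfrac1p}\big\|h^{-1}\jump{\bfv_h\otimes\bfn}\big\|_{p,\Gamma_h}\big)\qquad\text{for all }\bfv_h\in\Vhk\,,
\end{align*}
with $c>0$ depending only on $p$, $k$, $d$ and $\omega_0$.

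Next I would introduce a conforming averaging operator $A_h\colon\Vhk\to\Vhkco$ onto continuous, piecewise polynomial vector fields of degree $k$ that vanish on $\partial\Omega$ (so that $A_h\bfv_h\in(W^{1,p}_0(\Omega))^d$); this is where the hypothesis $k\ge1$ enters, since for $k=0$ this target space is trivial. The operator is defined node-wise: at each interior Lagrange node the value of $A_h\bfv_h$ is the arithmetic average of the elementwise values of $\bfv_h$ at that node, and at each boundary node it is set to zero. Using the equivalence of norms on the (scaled) reference simplex together with the local trace and inverse inequalities available on shape-regular meshes, one obtains the elementwise estimate
\begin{align*}
  \|\nabla_h(\bfv_h-A_h\bfv_h)\|_p^p\le c\sum_{\gamma\in\Gamma_h}h_\gamma^{1-p}\big\|\jump{\bfv_h\otimes\bfn}\big\|_{p,\gamma}^p\,,
\end{align*}
where interior faces $\gamma\in\Gamma_h^{i}$ contribute the genuine jumps of $\bfv_h$ while boundary faces $\gamma\subseteq\partial\Omega$ contribute the full boundary traces of $\bfv_h$ --- which is precisely what the boundary convention \eqref{eq:2.23} records in $\jump{\bfv_h\otimes\bfn}$. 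By the scaling $h_\gamma\sim h_K$ of shape-regular meshes, the right-hand side is, up to $h$-independent constants, the facet seminorm appearing in both $\|\cdot\|_{\nabla,p,h}$ and $\|\cdot\|_{\bfD,p,h}$.

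Then I would invoke the classical (conforming) Korn inequality on $(W^{1,p}_0(\Omega))^d$, valid for every $p\in(1,\infty)$, applied to $A_h\bfv_h$, giving $\|\nabla(A_h\bfv_h)\|_p\le c\,\|\bfD(A_h\bfv_h)\|_p$. Combining the triangle inequality with the previous step, with $\|\bfD_h(\bfv_h-A_h\bfv_h)\|_p\le\|\nabla_h(\bfv_h-A_h\bfv_h)\|_p$, and with $\nabla_h(A_h\bfv_h)=\nabla(A_h\bfv_h)$ (as $A_h\bfv_h$ is globally $W^{1,p}$), one arrives at
\begin{align*}
  \|\nabla_h\bfv_h\|_p&\le\|\nabla_h(\bfv_h-A_h\bfv_h)\|_p+\|\nabla(A_h\bfv_h)\|_p\\
  &\le c\,h^{\tfrac1p}\big\|h^{-1}\jump{\bfv_h\otimes\bfn}\big\|_{p,\Gamma_h}+c\,\big(\|\bfD_h\bfv_h\|_p+\|\nabla_h(\bfv_h-A_h\bfv_h)\|_p\big)\,,
\end{align*}
and absorbing the last term again by the estimate of the second step yields the broken Korn inequality, hence \eqref{eq:equi1}.

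The main obstacle is the construction of $A_h$ together with the $h$-independent $L^p$-bound of the second step, and in particular the correct treatment of the Dirichlet boundary: one must verify that forcing the boundary nodal values to vanish produces, on boundary elements, a contribution bounded by the $L^p$-norm of the boundary trace of $\bfv_h$ with exactly the same $h$-scaling as the interior jump penalty, and that the elementwise errors telescope into a sum over the faces of $\Gamma_h$ with only $O(1)$ overlap. This is by now standard --- the relevant scaling arguments are essentially those behind \cite[(A.26)--(A.28)]{dkrt-ldg} and the symmetric-gradient calculus of \cite[Sec.~4.1.1]{BCPH20}, and a closely related quasi-interpolation operator is used in \cite{alex-rose-nonconform} --- but it is the technical heart of the proof; everything else is soft and only uses the triangle inequality and the pointwise domination $\vert\bfD_h\bfv_h\vert\le\vert\nabla_h\bfv_h\vert$.
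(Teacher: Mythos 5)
Your argument is essentially the paper's proof: both use the node-averaging quasi-interpolant into $\Vhkco$ (interior-node averaging, zero at boundary nodes), bound $\|\nabla_h(\bfv_h-A_h\bfv_h)\|_p$ by the jump seminorm, apply the classical Korn inequality on $\Vo$ to the conforming part, and conclude by the triangle inequality, exploiting that the facet terms of $\|\cdot\|_{\nabla,p,h}$ and $\|\cdot\|_{\bfD,p,h}$ coincide. The only difference is that the paper imports your ``technical heart'' (the averaging estimate with the correct boundary treatment) directly from \cite[Proposition~14]{BCPH20} instead of redoing the scaling argument.
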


	\begin{proof}
          In principle, we proceed as in \cite[Lemma 1]{BPG19}, where
          the case $p=2$~is~treated, and solely replace
          \cite[Inequality (12)]{BPG19} by \cite[Proposition
          14]{BCPH20} in doing so.  However, for the convenience
          of~the~reader, we give a thorough proof here.
		
          Let $\mathcal{N}_h^k$ be the set of degrees of freedom
          associated~with~$\Vhkc$ and $\mathcal{T}_h(z)$ the set of
          simplices sharing $z$ if $z\in \mathcal{N}_h^k$.  For every
          $k\hspace{-0.1em}\in\hspace{-0.1em} \setN$, the
          node-averaging quasi-interpolant
          $\smash{\mathcal{J}_{\textup{av},h}^k:V_h^k\to
            \Vhkco\coloneqq \Vhkc\cap \Vo}$ is defined  for every
          $\bfv_h\in V_h^k$ via 
		\begin{align*}
			\mathcal{J}_{\textup{av},h}^k\bfv_h\coloneqq \smash{\sum_{z\in \mathcal{N}_h^k}}{\langle\bfv_h\rangle_z\psi_z},\quad
		\langle\bfv_h\rangle_z\coloneqq 
			\begin{cases}
				\frac{1}{\textup{card}(\mathcal{T}_h(z))}
				\sum_{T\in \mathcal{T}_h(z)}{(\bfv_h|_{T})(z)}&\;\text{ if }
				z\in \Omega\\\mathbf{0}&\;\textup{ if }z\in \partial\Omega
			\end{cases}\,,
		\end{align*}
		where $\smash{(\psi_z)_{z\in \mathcal{N}_h^k}}$ is the respective shape basis of $\Vhkc$. 
		Appealing to \cite[Proposition~14]{BCPH20}, there exists a constant $c_{\textup{av}}>0$ such that for every $\bfv_h\in V_h^k$, it holds
		 \begin{align}\label{ineq0}
		 		\smash{\big\|\nabla_h\big(\bfv_h-\mathcal{J}_{\textup{av},h}^k\bfv_h\big)\big\|_p
		 	\leq c_{\textup{av}}\,h^{\frac{1}{p}}\big\|  h^{-1}\llbracket \bfv_h\rrbracket\big\|_{p,\Gamma_h}\,.}
		 \end{align}
	 	Using $\llbracket \mathcal{J}_{\textup{av},h}^k\bfv_h\otimes\bfn\rrbracket=\mathbf{0}$ in $L^p(\Gamma_h)$ since $\mathcal{J}_{\textup{av},h}^k\bfv_h\in  \Vo$ and $\vert \llbracket \bfv_h\rrbracket\vert =\vert \llbracket \bfv_h\otimes\bfn\rrbracket\vert $~in~$L^p(\Gamma_h)$, we deduce from \eqref{ineq0} that
		 \begin{align}
		 	\smash{\big\|\bfv_h-\mathcal{J}_{\textup{av},h}^k\bfv_h\big\|_{\nabla,p,h}
		 	\leq (1+c_{\textup{av}})\,h^{\frac{1}{p}}\big\|  h^{-1}\llbracket \bfv_h\otimes\bfn\rrbracket\big\|_{p,\Gamma_h}\,.}\label{ineq}
		 \end{align}
		 From \eqref{ineq0} and \eqref{ineq}, using $\|\mathcal{J}_{\textup{av},h}^k\bfv_h\|_{\nabla,p,h}
		   =\|\nabla\mathcal{J}_{\textup{av},h}^k\bfv_h\|_p
		  \leq \tilde{c}_{\textbf{Korn}}\|\bfD\mathcal{J}_{\textup{av},h}^k\bfv_h\|_p$, by the classical Korn inequality, inasmuch as $\smash{\mathcal{J}_{\textup{av},h}^k\bfv_h\in \Vo}$,~we~conclude~that
		 \begin{align*}
		 	\|\bfv_h\|_{\nabla,p,h}&\leq \big\|\bfv_h-\mathcal{J}_{\textup{av},h}^k\bfv_h\big\|_{\nabla,p,h}
		 	+\big\|\mathcal{J}_{\textup{av},h}^k\bfv_h\big\|_{\nabla,p,h}
		 	 \\[-0.25mm]&\leq (1+c_{\textup{av}})\,h^{\frac{1}{p}}\big\|  h^{-1} \llbracket\bfv_h\otimes\bfn\rrbracket\big\|_{p,\Gamma_h}
		 	 +\tilde{c}_{\textbf{Korn}}\,\big\|\bfD\mathcal{J}_{\textup{av},h}^k\bfv_h\big\|_p
		 	 \\[-0.25mm]&\leq (1+c_{\textup{av}})\,\|\bfv_h\|_{\bfD,p,h}
		 	 +\tilde{c}_{\textbf{Korn}}\,\|\bfD_h\bfv_h\|_p
		 	 +\tilde{c}_{\textbf{Korn}}\,\big\|\bfD_h\big(\mathcal{J}_{\textup{av},h}^k\bfv_h-\bfv_h\big)\big\|_p
		 	 \\[-0.25mm]&\leq (1+c_{\textup{av}})\,\|\bfv_h\|_{\bfD,p,h}
		 	 +\tilde{c}_{\textbf{Korn}}\,\|\bfD_h\bfv_h\|_p
		 	+\tilde{c}_{\textbf{Korn}}\,c_{\textup{av}}\,h^{\frac{1}{p}}\big\|  h^{-1}\llbracket \bfv_h\otimes\bfn\rrbracket\big\|_{p,\Gamma_h}
		 	 \\[-0.25mm]&\leq (1+\tilde{c}_{\textbf{Korn}})\,(1+c_{\textup{av}})\,\|\bfv_h\|_{\bfD,p,h}\,.
		 \end{align*}
	\end{proof}
	
	For the symmetric DG norm, there holds a similar relation like \eqref{eq:eqiv0}.
	
	\begin{proposition}\label{equivalences}
          For every $p\in (1,\infty)$
          and $k\in \setN_0$, there exists
          a constant~${c>0}$
          such that for every $\bfw_h\in \WDG$, it holds
		\begin{align}
		\smash{c^{-1}\,\|\bfw_h\|_{\bfD,p,h}
		\leq \big\|\Dhk\bfw_h\big\|_p
		+h^{\frac{1}{p}}\big\|  h^{-1}\llbracket\bfw_h\otimes\bfn\rrbracket\big\|_{p,\Gamma_h}
		\leq c\,\|\bfw_h\|_{\bfD,p,h}\,.}
		\label{eq:equi2}
		\end{align}
	\end{proposition}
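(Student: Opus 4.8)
The plan is to mimic the argument underlying \eqref{eq:eqiv0}, which is exactly the analogous statement for the full (non-symmetric) DG gradient, but to track the symmetric parts throughout. The key observation is the defining identity \eqref{eq:defD}, namely $\Dhk\bfw_h = \bfD_h\bfw_h - \Rhks\bfw_h$, together with the fact that $\Rhks\bfw_h = [\Rhk\bfw_h]^{\textup{sym}}$ and $\bfD_h\bfw_h = [\nabla_h\bfw_h]^{\textup{sym}}$. Since taking symmetric parts is a norm-non-increasing linear projection on $\mathbb{R}^{d\times d}$, the estimate \eqref{eq:eqiv0} will transfer once I control the jump operator $\Rhks$ in terms of the jump seminorm.

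First I would establish the two inequalities in \eqref{eq:equi2} separately. For the upper bound, I start from $\Dhk\bfw_h = \bfD_h\bfw_h - \Rhks\bfw_h$, so by the triangle inequality $\|\Dhk\bfw_h\|_p \le \|\bfD_h\bfw_h\|_p + \|\Rhks\bfw_h\|_p$. The term $\|\Rhks\bfw_h\|_p \le \|\Rhk\bfw_h\|_p$ since $\Rhks\bfw_h$ is the pointwise symmetric part of $\Rhk\bfw_h$, and $\|\Rhk\bfw_h\|_p \le c\,h^{\frac1p}\|h^{-1}\llbracket\bfw_h\otimes\bfn\rrbracket\|_{p,\Gamma_h}$ is the standard stability bound for the (global) lifting/jump operator on shape-regular meshes — this is precisely one of the ingredients behind \cite[(A.26)--(A.28)]{dkrt-ldg}, which I may invoke. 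Adding $h^{\frac1p}\|h^{-1}\llbracket\bfw_h\otimes\bfn\rrbracket\|_{p,\Gamma_h}$ to both sides and recalling the definition \eqref{eq:2.29} of $\|\cdot\|_{\bfD,p,h}$ gives the right-hand inequality. For the lower bound, I again use \eqref{eq:defD} in the form $\bfD_h\bfw_h = \Dhk\bfw_h + \Rhks\bfw_h$, so $\|\bfD_h\bfw_h\|_p \le \|\Dhk\bfw_h\|_p + \|\Rhks\bfw_h\|_p \le \|\Dhk\bfw_h\|_p + c\,h^{\frac1p}\|h^{-1}\llbracket\bfw_h\otimes\bfn\rrbracket\|_{p,\Gamma_h}$ by the same lifting estimate; adding the jump term and using \eqref{eq:2.29} yields the left-hand inequality.

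**Main obstacle.** The only genuine technical point is the scaled stability estimate $\|\Rhk\bfw_h\|_p \le c\,h^{\frac1p}\|h^{-1}\llbracket\bfw_h\otimes\bfn\rrbracket\|_{p,\Gamma_h}$ (equivalently for $\Rhks$), i.e.\ that the lifting of the normal jumps is bounded, with the correct power of $h$, by the jump seminorm. This follows on uniformly shape-regular meshes from the Riesz-representation definition \eqref{eq:2.25.1}, a discrete inverse/trace inequality relating $\|\bfX_h\|_{p',\gamma}$ to $h^{-1/p}\|\bfX_h\|_{p',K}$ on simplices $K$ abutting $\gamma$, a Hölder duality argument on $\Gamma_h$, and a finite-overlap argument to pass from face-local to global norms; all of this is exactly \cite[(A.26)--(A.28)]{dkrt-ldg} (or \cite[Sec.~4.1.1]{BCPH20} for the symmetric version), which the statement already permits me to cite. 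Everything else is the triangle inequality and the definitions \eqref{eq:defD} and \eqref{eq:2.29}, so the proof is short once this lifting bound is in hand.
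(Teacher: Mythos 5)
Your proposal is correct and follows essentially the same route as the paper: both rest on the identity \eqref{eq:defD}, the triangle inequality, the fact that $\|\Rhks\bfw_h\|_p\le\|\Rhk\bfw_h\|_p$, and the lifting stability bound $\|\Rhk\bfw_h\|_p\le c\,h^{\frac1p}\|h^{-1}\jump{\bfw_h\otimes\bfn}\|_{p,\Gamma_h}$ cited from \cite{dkrt-ldg} (the paper invokes (A.25) there rather than (A.26)--(A.28), but this is the same ingredient). No gaps.
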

	
	\begin{proof}
          Appealing to \cite[(A.25)]{dkrt-ldg}, there exists a
          constant $c_{\boldsymbol{\mathcal{R}}}>0$ such that for
          every $\bfw_h\in \WDG$, it holds
		\begin{align}
		\smash{\|\Rhk \bfw_h\|_p
			\leq c_{\boldsymbol{\mathcal{R}}}\, h^{\frac{1}{p}}\big\|h^{-1}\llbracket \bfw_h\otimes\bfn\rrbracket\big\|_{p,\Gamma_h}\,.}\label{eq:eqiv1}
		\end{align}
		Using the identity \eqref{eq:defD} and \eqref{eq:eqiv1}, for every $\bfw_h\in V_h^k$, we conclude that
		\begin{align*}
                  \|\bfw_h\|_{\bfD,p,h}
                  &  \leq \big\|\Dhk\bfw_h\big\|_p
                    +\big\|\Rhks\bfw_h\big\|_p
                    +h^{\frac{1}{p}}\big\| h^{-1}\llbracket\bfw_h\otimes\bfn\rrbracket\big\|_{p,\Gamma_h}
                  \\[-0.25mm]
                  &\leq  \|\Dhk\bfw_h\|_p 
                    +(1+c_{\boldsymbol{\mathcal{R}}})\,h^{\frac{1}{p}}\big\| h^{-1}\llbracket\bfw_h\otimes\bfn\rrbracket\big\|_{p,\Gamma_h}
                  \\[-0.25mm]
                  &\leq  \|\bfD_h\bfw_h\|_p +\big\|\Rhks\bfw_h\big\|_p
                    +(1+c_{\boldsymbol{\mathcal{R}}})\,h^{\frac{1}{p}}\big\| h^{-1}\llbracket\bfw_h\otimes\bfn\rrbracket\big\|_{p,\Gamma_h}
                  \\[-0.25mm]
                  &\leq 2\,(1+c_{\boldsymbol{\mathcal{R}}})\,\|\bfw_h\|_{\bfD,p,h}\,.
		\end{align*}
	\end{proof}
	
\subsubsection{DG divergence operator}
    The \textbf{local divergence} is defined, for every $\bfw_h\!\in\!\WDG$, via $\textup{div}_h\bfw_h\coloneqq \text{tr}(\nabla_h \bfw_h)\!\in\! L^p(\Omega)$. 
	In addition, for every~${k\in \setN_0}$, the \textbf{DG~\hspace{-0.2mm}divergence \hspace{-0.2mm}operator} 
	\hspace{-0.2mm}$\Divhk\hspace{-0.17em}\colon\hspace{-0.17em}\WDG\hspace{-0.17em}\to \hspace{-0.17em}L^p(\Omega)$ \hspace{-0.2mm}is \hspace{-0.2mm}defined,~\hspace{-0.2mm}for~\hspace{-0.2mm}every~\hspace{-0.2mm}${\bfw_h\hspace{-0.17em}\in\hspace{-0.17em} \WDG}$, via $\smash{\Divhk\bfw_h\coloneqq \text{tr}(\Ghk\bfw_h)
		=\text{tr}(\Dhk\bfw_h)\in L^p(\Omega)}$,~i.e., 
	\begin{align*}    
	    \smash{\Divhk\bfw_h=\textup{div}_h\bfw_h-\textup{tr}(\Rhk\bfw_h)\quad\text{ in }L^p(\Omega)\,.}
	\end{align*}
	In particular, 
	for every $\bfw_h\in \WDG$ and $z_h  \in \smash{Q_h^k}$, we have that
	\begin{align}
	    \label{eq:div-dg.0}
        \begin{aligned}
	    \big(\Divhk\bfw_h,z_h\big)&=(\textup{div}_h\bfw_h,z_h)
		-\big\langle \llbracket \bfw_h\cdot\bfn\rrbracket,\{z_h\}\big\rangle_{\Gamma_h}\\[-0.5mm]
		&=-(\bfw_h,\nabla_h z_h)
		+\big\langle \{\bfw_h\cdot\bfn\}, \llbracket z_h\rrbracket\big\rangle_{\Gamma_h^{i}}\,,
		\end{aligned}
	\end{align}
    i.e., $\smash{(\Divhk \bfw_h,z_h)=-( \bfw_h,\nabla z_h)}$ if $z_h  \in \smash{\Qhkc}$. Thus,  for all $\bfv\in \smash{W^{1,p}_0(\Omega)}$~and~$z_h\in \smash{\Qhkc}$
\begin{align}
  \label{eq:div-dg}
  \begin{aligned}
    \smash{\big(\Divhk \PiDG \bfv,z_h\big)=-( \bfv,\nabla z_h)=(\divo\bfv, z_h)}\,.
\end{aligned}
\end{align}
\section{Fluxes and LDG formulations}\label{sec:ldg}
	
	To obtain the LDG formulation of \eqref{eq:p-navier-stokes} for 
    $k \in \setN$, we multiply the equations in \eqref{eq:p-navier-stokes-DG}$_{1,2}$ by
    $\bfX_h,\bfY_h, \bfZ_h\in \Xhk$,~${\bfz_h\hspace{-0.1em}\in\hspace{-0.1em} \Vhk}$,~and~${z_h\hspace{-0.1em}\in\hspace{-0.1em} \Qhk}$, resp., use integration-by-parts, replace in the volume integrals 
    $\bfL$,~$\bfS$,~$\bfK$,~$\bfG$,~$\bfv$,~and~$q$ by the discrete objects $\bfL_h,\bfS_h,\bfK_h, \PiDG \bfG\in\Xhk$,~$\bfv_h\in \Vhk$,~and~${q_h\in \Qhko\coloneqq \Qhk\cap \Qo}$,~resp., and in the surface integrals $\bfS$, $\bfK$, $\bfG$, $\bfv$, and $q$ by the numerical fluxes     
    $\smash{\widehat{\bfS}_h\hspace{-0.12em}\coloneqq \hspace{-0.12em}\widehat{\bfS}(\bfv_h,\bfL_h)}$, $\smash{\widehat{\bfK}_h\hspace{-0.12em}\coloneqq \hspace{-0.12em}\widehat{\bfK}(\bfv_h)}$, $\smash{\widehat{\bfG}_h\hspace{-0.12em}\coloneqq \hspace{-0.12em}\widehat{\bfG}(\PiDG\bfG)}$,
    $\smash{\widehat{\bfv}_{h,\sigma}\hspace{-0.2em}\coloneqq \hspace{-0.12em}\widehat{\bfv}_\sigma(\bfv_h)}$,
    $\smash{\widehat{\bfv}_{h,q}}\hspace{-0.12em}\coloneqq \hspace{-0.12em}\smash{\widehat{\bfv}_q(\bfv_h,q_h)}$,
    and~$\smash{\widehat{q}_{h}\hspace{-0.12em}\coloneqq \hspace{-0.12em}\widehat{q}(q_h)}$,~resp., and obtain, also using that $\bfS_h -\tfrac{1}{2} \bfK_h-q_h \mathbf{I}_d\in X^{k,\textup{sym}}_h$, that
    \begin{align}
        \label{eq:flux-K}  \int_K \bfL_h : \bfX_h\,\textup{d}x &= -\int_K \bfv_h
            \cdot\divo \bfX_h\,\textup{d}x + \int_{\partial K} \widehat{\bfv}_{h,\sigma}
            \cdot \bfX_h \bfn\,\textup{d}s\,,
        \\
        \int_K \bfS_h : \bfY_h \,\textup{d}x &= \int_K \SSS(\bfL_h^{\textup{sym}}) : \bfY_h\,\textup{d}x\,,\notag
        \\
        \int_K \bfK_h : \bfZ_h \,\textup{d}x &= \int_K \bfv_h\otimes \bfv_h : \bfZ_h\,\textup{d}x\,,\notag
        \\
        \int_K \big(\bfS_h -\tfrac{1}{2} \bfK_h-q_h \mathbf{I}_d\big): \bfD_h \bfz_h\,\textup{d}x &= \int_K     \big(\bfg-\tfrac{1}{2}(\bfL_h-g\mathbf{I}_d)\bfv_h \big)\cdot
        \bfz_h +\PiDG\bfG \colon\nabla_h \bfz_h
        \,\textup{d}x\notag\\&\quad +\int_{\partial K} \bfz_h \cdot
        \big (\widehat{\bfS}_h-\widehat{\bfK}_h-\widehat{q}_h \mathbf{I}_d-\widehat{\bfG}_h\big) \bfn
        \,\textup{d}s \,,\notag\\
        \int_K \bfv_h \cdot \nabla_h z_h \,\textup{d}x &= \int_{\partial K} \widehat{\bfv}_{h,q}\cdot \bfn z_h     \,\textup{d}s-\int_{K} g z_h     \,\textup{d}x\,.\notag
    \end{align}
    For given boundary data
    $\bfv_0\in W^{\smash{1-\frac{1}{p}},p}(\partial\Omega)$ and given
    divergence $g\in L^p(\Omega)$ such that \eqref{eq:compatibility}
    is satisfied, we denote by
    $\bfv^* \hspace{-0.1em}\in\hspace{-0.1em} W^{1,p}(\Omega)$ an
    extension of $\bfv_0$ with
    $\divo \bfv^*\hspace{-0.1em}=\hspace{-0.1em}g$~in~$L^p(\Omega)$,
    which exists according to \cite[(3.13)]{r-mol-inhomo}.  Using this
    extension, {and restricting ourselves to the case that $q_h\in \Qhkco\vcentcolon =\Qhko\cap C^0(\overline{\Omega})$}
    the numerical~fluxes~are, for every stabilization
    parameter $\alpha>0$, defined~via
    \begin{align}
    \label{def:flux-v1}
     \flux{\bfv}_{h,\sigma}(\bfv_h) &\coloneqq  
      \begin{cases}
        \avg{\bfv_h} &\text{on $\Gamma_h^{i}$}
        \\
        \bfv^* &\text{on $\partial\Omega$}
      \end{cases}\,,
      \quad \flux{\bfv}_{h,q}(\bfv_h) \coloneqq  
      \begin{cases}
        \avg{\bfv_h} &\text{on $\Gamma_h^{i}$}\\
        \bfv^* &\text{on $\partial\Omega$}
      \end{cases}\,,\\[-0.25mm]
          \label{def:flux-q}
     \flux{q}(q_h) &\coloneqq 
        q_h 
         \quad \text{on $\Gamma_h$}\,,\\[-0.25mm]
      \label{def:flux-S}
     \flux{\bfS}(\bfv_h, \bfS_h,\bfL_h) &\coloneqq 
        \avg{\bfS_h} \hspace*{-0.1em}- \hspace*{-0.1em}\alpha\, \SSS_{\smash{\sss}}\big(h^{-1}\jump{(\bfv_h     \hspace*{-0.1em}-\hspace*{-0.1em} \bfv_0^*)\hspace*{-0.1em}\otimes\hspace*{-0.1em} \bfn}\big)
         \quad \text{on $\Gamma_h$}\,,\\[-0.25mm]
         \label{def:flux-K}
     \flux{\bfK}(\bfv_h) &\coloneqq 
        \avg{\bfK_h} 
         \quad \text{on $\Gamma_h$}\,,\\[-0.25mm]
      \label{def:flux-F}
      \flux{\bfG}(\PiDG\bfG) &\coloneqq  
        \bigavg{\PiDG \bfG} \quad\text{on $\Gamma_h$}\,,
    \end{align}
    where the operator $\SSS_{\smash{\sss}}$ is defined as in
    \eqref{eq:flux}.  Thus we arrive~at~an~inf-sup stable system
    without using a pressure stabilization. {If one wants to work with a
    discontinuous pressure one has to modify the fluxes as follows:
    $\flux{q}(q_h) \coloneqq  \avg{q_h} $ on $\Gamma_h$ and
    $\flux{\bfv}_{h,q}(\bfv_h) \coloneqq 
    \avg{\bfv_h}+h\jump{q_h\bfn}$ on $\Gamma_h^{i}$,
    $\flux{\bfv}_{h,q}(\bfv_h) \coloneqq  \bfv^* $ on
    $\partial\Omega$.}
    
    Proceeding as in \cite{dkrt-ldg} and, in addition, using that $\PiDG$
    is self-adjoint,~we~arrive~at~the \textbf{flux formulation}
    of~\eqref{eq:p-navier-stokes}: For given 
    $\bfv^*\in W^{1,p}(\Omega)$, 
    $\bfg\in \smash{L^{p'}(\Omega)} $, $\bfG\in
    \smash{L^{p'}(\Omega)}$, and $g \in L^p(\Omega)$,
    find $(\bfL_h,\bfS_h,\bfK_h,\bfv_h,q_h)^\top \in \Xhk \times \Xhk\times\Xhk\times\Vhk \times \Qhkco$ such
    that for all $(\bfX_h,\bfY_h,\bfZ_h,\bfz_h,z_h)^\top$ $ \in  \Xhk \times \Xhk\times\Xhk\times\Vhk \times \Qhkc$,  it     holds
    \begin{align}
        \hskp{\bfL_h}{\bfX_h} &= \bighskp{\Ghk \bfv_h + \Rhk\bfv^*}{
          \bfX_h}\,,  \notag
        \\[-0.25mm]
        \hskp{\bfS_h}{\bfY_h} &= \hskp{\SSS(\bfL_h^{\textup{sym}})}{
          \bfY_h}\,, \notag
           \\[-0.25mm]
       \label{eq:DG} \hskp{\bfK_h}{\bfZ_h} &= \hskp{\bfv_h\otimes \bfv_h}{
          \bfZ_h}\,, 
        \\[-0.25mm]
        \bighskp{\bfS_h-\tfrac{1}{2}\bfK_h-q_h\mathbf{I}_d}{\Dhk \bfz_h} &=
        \hskp{\bfg-\tfrac{1}{2}(\bfL_h-g\mathbf{I}_d)\bfv_h}{\bfz_h}+\bighskp{\bfG}{\Ghk\bfz_h} \notag
        \\[-0.25mm]
        &\quad - \alpha \bigskp{\SSS_{\smash{\sss}}(h^{-1} \jump{(\bfv_h -\bfv^*)\otimes
            \bfn})}{ \jump{\bfz_h \otimes \bfn}}_{\Gamma_h}\,, \notag\\[-1mm]
        \bighskp{\Divhk \bfv_h}{z_h}&=\bighskp{g-\textrm{tr}(\Rhk\bfv^*)}{z_h}\,.  \notag
    \end{align}  
    
    Next, we eliminate in the system~\eqref{eq:DG} the
    variables $ \bfL_h\hspace{-0.15em}\in\hspace{-0.15em} \Xhk$, $ \bfS_h\hspace{-0.15em}\in\hspace{-0.15em}     \Xhk$~and~${\bfK_h\hspace{-0.15em}\in\hspace{-0.15em} \Xhk}$ to derive a system only expressed in
    terms of the two variables ${\bfv_h\in\Vhk}$~and~${q_h\in \Qhkco}$. 
{    Note that it follows from \eqref{eq:DG}$_{1,2,3}$ that
    \begin{align*}
      \bfL_h= \Ghk \bfv_h + \Rhk\bfv^*\,, \quad
      \bfS_h=\PiDG\SSS(\bfL_h^{\textup{sym}})\,,\quad \bfK_h= \PiDG
      (\bfv_h \otimes \bfv_h)\,.
    \end{align*}
    If we insert this into \eqref{eq:DG}$_4$,
    we get the discrete counterpart~of~Problem~(Q):}
    
    \textbf{Problem (Q$_h$).} For given 
    $(\bfg,\bfG,g,\bfv^*)^\top\!\in\!
    \smash{L^{p'}(\Omega)}\times\smash{L^{p'}(\Omega)}\times
    L^p(\Omega)\times W^{1,p}(\Omega)$,
    find $(\bfv_h,q_h)^\top\!\in V_h^k\times \Qhkco$ such that 
     for all $(\bfz_h,z_h)^\top \in \Vhk\times\Qhkco$, it holds
    \begin{align}
        &\bighskp{\SSS(\Dhk \bfv_h + \Rhks\bfv^*)-\tfrac{1}{2}\bfv_h\otimes \bfv_h-q_h\mathbf{I}_d}{\Dhk
          \bfz_h}\notag
        \\[-0.25mm]
       \label{eq:primal0.1} &=     \bighskp{\bfg-\tfrac{1}{2}(\Ghk \bfv_h +  \Rhk\bfv^*-g\mathbf{I}_d)\bfv_h}{\bfz_h}+\bighskp{\bfG}{\Ghk\bfz_h}
        \\[-0.25mm]
        &\quad - \alpha \bigskp{\SSS_{\smash{\sssl}}(h^{-1} \jump{(\bfv_h -\bfv^*)\otimes
            \bfn})}{ \jump{\bfz_h \otimes \bfn}}_{\Gamma_h}\,,\notag\\
      \label{eq:primal0.2}&\bighskp{\Divhk \bfv_h}{z_h}=\bighskp{g-\textrm{tr}(\Rhk\bfv^*)}{z_h}\,.
    \end{align}
    
    Next, we eliminate in the system~\eqref{eq:primal0.1}, \eqref{eq:primal0.2}, the
    variable $q_h\in \Qhkco$ to derive a system only expressed in
    terms of the single variable $\bfv_h\hspace{-0.1em}\in\hspace{-0.1em} \smash{\Vhk}$. To~this~end,~for~${f\hspace{-0.1em}\in\hspace{-0.1em} L^p_0(\Omega)}$, we introduce
    \begin{align*}
        V_h^k(f)\coloneqq \big\{\bfv_h\in V_h^k\mid \bighskp{\Divhk \bfv_h}{z_h}=(f,z_h)\textup{ for all }z_h\in     \Qhkc\big\}\,,
    \end{align*}
    which is non-empty due to the surjectivity of the Bogovski\u{\i}
    operator from~$W^{1,p}_0(\Omega)$~into $L^p_0(\Omega)$ and
    \eqref{eq:div-dg}.  Then, because
    $\hskp{z_h\mathbf{I}_d}{\Dhk \bfz_h}=\hskp{z_h}{\Divhk \bfz_h}=0$
    for all $z_h\in \Qhkc$ and $\bfz_h\in V_h^k(0)$,
    we~obtain~the~discrete counterpart of Problem (P):
    
    \textbf{Problem (P$_h$).} For given 
    $(\bfg,\bfG,g,\bfv^*)^\top\in
    \smash{L^{p'}(\Omega)}\times\smash{L^{p'}(\Omega)}\times L^p(\Omega)\times  W^{1,p}(\Omega)$, find $\bfv_h\in V_h^k(g-\textup{tr}(\Rhk\bfv^*))$ such that for all $\bfz_h\in     V_h^k(0)$,~it~holds
    \begin{align} \label{eq:primal}
        \begin{aligned}
          &\bighskp{\SSS(\Dhk \bfv_h + \Rhks\bfv^*)-\tfrac{1}{2}\bfv_h\otimes \bfv_h}{\Dhk
          \bfz_h}\\
       &=     \bighskp{\bfg-\tfrac{1}{2}(\Ghk \bfv_h +  \Rhk\bfv^*-g\mathbf{I}_d)\bfv_h}{\bfz_h}+\bighskp{\bfG}{\Ghk\bfz_h}\\
       &\quad - \alpha \bigskp{\SSS_{\smash{\sssl}}(h^{-1} \jump{(\bfv_h -\bfv^*)\otimes
            \bfn})}{ \jump{\bfz_h \otimes \bfn}}_{\Gamma_h}\,.
        \end{aligned}
    \end{align}
    In particular, note that
    $V_h^k(g-\textup{tr}(\Rhk\bfv^*))\neq \emptyset$ since
    $g-\textup{tr}(\Rhk\bfv^*)\in
    L^p_0(\Omega)$~due~to~\eqref{eq:compatibility}.  Problem (Q$_h$)
    and Problem (P$_h$) are called \textbf{primal formulations} of the
    system~\eqref{eq:p-navier-stokes}. In \hspace{-0.1mm}order
    \hspace{-0.1mm}to \hspace{-0.1mm}formulate \hspace{-0.1mm}Problem
    \hspace{-0.1mm}(P$_h$) \hspace{-0.1mm}as \hspace{-0.1mm}an
    \hspace{-0.1mm}equivalent \hspace{-0.1mm}operator
    \hspace{-0.1mm}equation \hspace{-0.1mm}in
    \hspace{-0.1mm}$\Vhk(0)$,~\hspace{-0.1mm}we~\hspace{-0.1mm}make
    the ansatz
    \begin{align}\label{ansatz}
        \bfu_h\coloneqq \bfv_h -\PiDG \bfv^*\in
    \Vhk(0)\,.
    \end{align}
    Using \eqref{ansatz}, the system \eqref{eq:primal} is equivalent
    to~seeking~${\bfu_h \in \Vhk(0)}$ such that for all
    ${\bfz_h \in \Vhk(0)}$, it holds
    \begin{align}
      \label{eq:op}
      \smash{\bigskp{A_h^k\bfu_h}{\bfz_h}_{\WDG}= \bigskp{f_h^k}{\bfz_h}_{\WDG}\,,}
    \end{align}
    where, introducing the abbreviations
    \begin{align}
        \begin{aligned}
            \smash{\bfh_h^k\bfv^*}&\coloneqq \smash{\PiDG \bfv^*-\bfv^*\in L^p(\Omega)\,,}\\
            \smash{\bfH_h^k\bfv^*}&\coloneqq \smash{\Ghk(\PiDG     \bfv^*-\bfv^*)+\nabla\bfv^*\in L^p(\Omega)\,,}
        \end{aligned}
    \end{align}
    the non-linear operator 
    \begin{align*}
        \smash{A_h^k\coloneqq S_h^k+B_h^k\colon \WDG \to     (\WDG)^*\,,}
    \end{align*}
    and the linear functional
    $f_h^k\in (\WDG)^* $ are defined, for  every ${\bfu_h, \bfz_h \in \WDG}$,~via
    \begin{align}
      \bigskp{S_h^k\bfu_h}{\bfz_h}_{\WDG}
      &\coloneqq  \bighskp{\SSS(\Dhk \bfu_h + [\bfH_h^k\bfv^*]^{\textup{sym}})}{\Dhk
        \bfz_h}\label{eq:bS}  
      \\[-1mm]
      &\quad + \!\alpha \bigskp{\SSS_{\smash{\!\ssslo}}  (h^{\!-1} \jump{(\bfu_h\!+\!\bfh_h^k\bfv^*)\!\otimes\!
        \bfn})}{ \jump{\bfz_h\! \otimes\! \bfn}}_{\Gamma_h},\notag
      \\
      \bigskp{B_h^k\bfu_h}{\bfz_h}_{\WDG}
      &\coloneqq -\tfrac{1}{2}\bighskp{(\bfu_h+\PiDG\bfv^*)\otimes  (\bfu_h+\PiDG\bfv^*)}{\Dhk
        \bfz_h}      \label{eq:bB}
      \\[-1mm]
      &\quad+
        \tfrac{1}{2}\bighskp{\PiDG\bfz_h\otimes(\bfu_h+\PiDG\bfv^*)}{
        \Ghk\bfu_h +\bfH_h^k\bfv^*-g\mathbf{I}_d}\,,
        \notag
      \\\label{eq:bb}
      \bigskp{f_h^k}{\bfz_h}_{\WDG} &\coloneqq \hskp{\bfg}{\bfz_h}+\bighskp{\bfG}{\Ghk\bfz_h}\,.
    \end{align}
    The derivation of well-posedness (i.e., solvability), stability (i.e., a priori estimates),~and weak convergence of the operator equation~\eqref{eq:op}~to a weak formulation~of~the~system 
    \eqref{eq:p-navier-stokes} \hspace{-0.2mm}naturally \hspace{-0.2mm}motivates \hspace{-0.2mm}an \hspace{-0.2mm}abstract \hspace{-0.2mm}weak \hspace{-0.2mm}convergence \hspace{-0.2mm}theory \hspace{-0.2mm}based~\hspace{-0.2mm}on~\hspace{-0.2mm}a~\hspace{-0.2mm}\mbox{non-con}forming generalization of the standard notion of pseudo-monotonicity.
	
    \section{Non-conforming \hspace{-0.1mm}pseudo-monotonicity}\label{sec:non-conform}

    \!In \hspace{-0.1mm}this \hspace{-0.1mm}section, \hspace{-0.1mm}we \hspace{-0.1mm}introduce~\hspace{-0.1mm}a~\hspace{-0.1mm}general concept of non-conforming pseudo-monotonicity. It can be~understood~as~a~\mbox{further} generalization of the recently introduced  quasi-non-conforming pseudo-monotonicity, cf.~\cite{alex-rose-nonconform}.
    
	\begin{definition}[Non-conforming approximation]\label{def:non-conform_approx}
    Let $V$ be a reflexive Banach space and $(X_n)_{n\in \setN}$ a sequence of Banach spaces such that $V\hspace{-0.15em}\subseteq\hspace{-0.15em} X_n$ with ${\|\hspace{-0.1em}\cdot\hspace{-0.1em}\|_{V}\hspace{-0.15em}=\hspace{-0.15em}\|\hspace{-0.1em}\cdot\hspace{-0.1em}\|_{X_n}}$~on $V$ for all $n\hspace{-0.1em}\in\hspace{-0.1em} \setN$~and~let $Y$ be a 
	Banach space such that $X_n\subseteq Y$~for~all~${n\in \mathbb{N}}$.~Then, a sequence of reflexive Banach spaces $V_n\hspace{-0.1em}\subseteq\hspace{-0.1em} X_n$, $n\hspace{-0.1em}\in\hspace{-0.1em} \mathbb{N}$, is called \textbf{a non-conforming~ap-proximation of $V$ with respect to $(X_n)_{n\in \setN}$ and $Y$}, if the following~conditions~are satisfied:
	\begin{itemize}
	    \item[(NC.1)]\hypertarget{NC.1}{} There exists a dense subset $D\subseteq V$ such that for every $v\in D$, there exists a sequence $v_n\in V_n$, $n\in \mathbb{N}$, with $\|v_n-v\|_{X_n}\to 0$ $(n\to \infty)$.
	    \item[(NC.2)]\hypertarget{NC.2}{} For each sequence
              $v_n\hspace{-0.1em}\in \hspace{-0.1em}V_{m_n}$,
              $n\hspace{-0.15em}\in \hspace{-0.15em}\setN$, where
              $(m_n)_{n\in
                \setN}\hspace{-0.15em}\subseteq\hspace{-0.15em}
              \setN$~with~${m_n\hspace{-0.15em}\to\hspace{-0.15em}
                \infty}$~${(n\hspace{-0.1em}\to\hspace{-0.1em}
                \infty)}$, from $\sup_{n\in
                \mathbb{N}}{\|v_n\|_{X_{m_n}}}<\infty$, it follows the
              existence of a {subsequence} $(v_{n_\ell})_{\ell \in
                \setN}$ of $(v_n)_{n\in \setN}$ 
              and an element $v\in V$ such that $v_{n_\ell}\weakto v$ in $Y$
              $(\ell\to \infty)$.
	\end{itemize}
	\end{definition}

{        Condition (\hyperlink{NC.1}{NC.1}) represents the strong
        approximability of the spaces $(V_n)_{n\in \setN}$ of $V$,
        which can be found in the literature
        (cf.~\cite{ern-theory}). Condition (\hyperlink{NC.2}{NC.2}) is
        the non-conforming analogue of the weak compactness of bounded
        sets in reflexive Banach spaces (cf.~\cite{ern-book} for a
        particular realization). The abstract condition seems to be
        new, since previous convergence results for DG schemes are
        mostly based on convergence rates, for which this weak
        condition is not needed. }
        
	Next, we introduce an extension of the notion of pseudo-monotonicity~to~the~frame-work of non-conforming approximations. 
	
	\begin{definition}[Non-conforming pseudo-monotonicity]\label{def:non-conform_pseudo}
	Let  $(V_n)_{n\in \setN}$ be a  non-conforming approximation of $V$ with respect to $(X_n)_{n\in \setN}$ and $Y$. Then, a sequence of operators  $A_n:X_n\to X_n^*$, $n\in \setN$, is called \textbf{non-conforming pseudo-monotone with respect to $(V_n)_{n\in \setN}$ and $A:V\to V^*$}, if for 
	each sequence $v_n\in V_{m_n}$, $n\in \setN$, where $(m_n)_{n\in \setN}\subseteq \setN$ with $m_n\to \infty$ $(n\to \infty)$, from
	\begin{gather}
	    \sup_{n\in \mathbb{N}}{\|v_n\|_{X_{m_n}}}<\infty\,,\qquad v_n\weakto v\quad\text{ in } Y\quad(n\to \infty)\,,\label{def:non-conform_pseudo.1}\\[-1mm]
	    \limsup_{n\to \infty}{\langle A_{m_n}\!v_n,v_n-v\rangle_{X_{m_n}}}\leq 0\,,\label{def:non-conform_pseudo.2}
	\end{gather}
	it follows that $\langle Av,v-z\rangle_V\hspace{-0.15em}\leq\hspace{-0.15em} \liminf_{n\to \infty}{\langle A_{m_n}\!v_n,v_n-z\rangle_{X_{m_n}}}\hspace{-0.15em}$ for all~$z\hspace{-0.15em}\in\hspace{-0.15em} V\hspace{-0.15em}$.~In~\mbox{particular}, note that \eqref{def:non-conform_pseudo.1} already implies that 
		$v\in V$ due to (\hyperlink{NC.2}{NC.2}).
	\end{definition}
	
	A  useful property of pseudo-monotone~operators is its stability under summation. This property is  shared by non-conforming pseudo-monotone operators.
	
	\begin{lemma}\label{def:non-conform_sum}
		Let  $(V_n)_{n\in \setN}$ be a  non-conforming approximation of $V$ with respect to $(X_n)_{n\in \setN}$ and $Y$. If
		$A_n,B_n:X_n\to X_n^*$, $n\in \setN$,~are~non-conforming pseudo-monotone 
		with respect to $(V_n)_{n\in\setN}$ and ${A,B:V\to V^*}$, resp., then   
		${A_n+B_n:X_n\to X_n^*}$, $n\hspace{-0.1em}\in\hspace{-0.1em} \setN $, is non-conforming pseudo-monotone 
		with respect to $(V_n)_{n\in\setN}$ and ${A\hspace{-0.1em}+\hspace{-0.1em}B\hspace{-0.1em}\colon\hspace{-0.1em}V\hspace{-0.1em}\to\hspace{-0.1em} V^*\hspace{-0.1em}}$.
	\end{lemma}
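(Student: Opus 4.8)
The plan is to follow the classical argument that pseudo-monotonicity is preserved under sums, transplanted to the non-conforming setting. First I would fix a sequence $v_n\in V_{m_n}$, $n\in\setN$, with $(m_n)_{n\in\setN}\subseteq\setN$, $m_n\to\infty$, satisfying $\sup_n\|v_n\|_{X_{m_n}}<\infty$, $v_n\weakto v$ in $Y$, and $\limsup_n\langle (A_{m_n}+B_{m_n})v_n,v_n-v\rangle_{X_{m_n}}\le 0$; by (\hyperlink{NC.2}{NC.2}) the weak limit satisfies $v\in V$. Since $A_{m_n}+B_{m_n}\colon X_{m_n}\to X_{m_n}^*$ is a well-defined map whose duality pairing splits additively, and since $\liminf a_n+\liminf b_n\le\liminf(a_n+b_n)$, the target inequality $\langle (A+B)v,v-z\rangle_V\le\liminf_n\langle (A_{m_n}+B_{m_n})v_n,v_n-z\rangle_{X_{m_n}}$ for all $z\in V$ would follow immediately once we can apply the non-conforming pseudo-monotonicity of $A$ and of $B$ separately to the sequence $(v_n)_{n\in\setN}$ — i.e.\ once we know that both $\limsup_n\langle A_{m_n}v_n,v_n-v\rangle_{X_{m_n}}\le 0$ and $\limsup_n\langle B_{m_n}v_n,v_n-v\rangle_{X_{m_n}}\le 0$.

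Establishing these two separate $\limsup$ inequalities from the single joint one is the only step requiring genuine work, and I would do it by contradiction. Suppose $\beta:=\limsup_n\langle A_{m_n}v_n,v_n-v\rangle_{X_{m_n}}>0$ (allowing $\beta=+\infty$), and choose a subsequence $(v_{n_\ell})_{\ell\in\setN}$ along which $\langle A_{m_{n_\ell}}v_{n_\ell},v_{n_\ell}-v\rangle_{X_{m_{n_\ell}}}\to\beta$. Writing (and suppressing the $m_{n_\ell}$ subscripts for readability) $\langle Bv_{n_\ell},v_{n_\ell}-v\rangle=\langle (A+B)v_{n_\ell},v_{n_\ell}-v\rangle-\langle Av_{n_\ell},v_{n_\ell}-v\rangle$ and taking $\limsup_\ell$, the first term has non-positive $\limsup$ (a subsequence of a sequence with non-positive $\limsup$) and the second term converges to $-\beta$, so $\limsup_\ell\langle B_{m_{n_\ell}}v_{n_\ell},v_{n_\ell}-v\rangle_{X_{m_{n_\ell}}}\le-\beta<0$. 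But $(v_{n_\ell})_{\ell\in\setN}$ — with index sequence $(m_{n_\ell})_{\ell\in\setN}$, which still tends to $\infty$ — satisfies all hypotheses of Definition~\ref{def:non-conform_pseudo} for $B$; testing the resulting conclusion with $z=v$ gives $0\le\liminf_\ell\langle B_{m_{n_\ell}}v_{n_\ell},v_{n_\ell}-v\rangle_{X_{m_{n_\ell}}}\le-\beta<0$, a contradiction. Hence $\limsup_n\langle A_{m_n}v_n,v_n-v\rangle_{X_{m_n}}\le 0$, and by exchanging the roles of $A$ and $B$ the same holds with $B$ in place of $A$.

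With both separate inequalities secured, I would invoke non-conforming pseudo-monotonicity of $A$ and of $B$ with respect to $(V_n)_{n\in\setN}$: for every $z\in V$, $\langle Av,v-z\rangle_V\le\liminf_n\langle A_{m_n}v_n,v_n-z\rangle_{X_{m_n}}$ and $\langle Bv,v-z\rangle_V\le\liminf_n\langle B_{m_n}v_n,v_n-z\rangle_{X_{m_n}}$; adding these and using superadditivity of $\liminf$ yields the claim. I expect the main obstacle to be purely bookkeeping in the contradiction step — keeping the subsequence indices $m_{n_\ell}$ straight and correctly handling the case $\beta=+\infty$ — while everything else is a direct appeal to the definitions together with elementary properties of $\limsup$ and $\liminf$.
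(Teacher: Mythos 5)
Your proposal is correct and follows essentially the same route as the paper's proof: split the pairing into the two summands, derive the separate $\limsup\le 0$ conditions by the same contradiction argument (applying the non-conforming pseudo-monotonicity of the other operator along the subsequence with $z=v$), and then sum the two resulting $\liminf$ inequalities. Your extra care with the case $\beta=+\infty$ and the explicit use of superadditivity of $\liminf$ are minor refinements of the same argument.
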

	
	\begin{proof}
		Let $v_n\in V_{m_n}$, $n\in\setN$, where $(m_n)_{n\in\setN}\subseteq\setN$ 
		with $m_n\to \infty$ $(n\to\infty)$,~be~a sequence satisfying 
		\eqref{def:non-conform_pseudo.1}  and $\limsup_{n\to\infty}{\langle(A_{m_n}+B_{m_n})v_n,v_n
			-v\rangle_{X_{m_n}}}\leq
		0$.~For~${n\in\setN}$, set ${a_n\coloneqq \langle A_{m_n}\!v_n,v_n
		-v\rangle_{X_{m_n}}}$ and
		$b_n\coloneqq \langle B_{m_n}\!v_n,v_n
		-v\rangle_{X_{m_n}}$.~Then,~${\limsup_{n\to\infty}{a_n}\hspace{-0.1em}\leq\hspace{-0.1em}
		0}$ and $\limsup_{n\to\infty}{b_n}\hspace{-0.1em}\leq\hspace{-0.1em}
		0$. In fact, suppose the contrary,~e.g.,~that
		${\limsup_{n\to\infty}{a_n}\hspace{-0.1em}=\hspace{-0.1em}a\hspace{-0.1em}>\hspace{-0.1em}
			0}$. Then, there exists a subsequence such that
		$a_{n_\ell}\hspace{-0.1em}\to\hspace{-0.1em}
		a$ $(\ell\hspace{-0.1em}\to\hspace{-0.1em}\infty)$ and, consequently, ${\limsup_{\ell\to\infty}{b_{n_\ell}}\hspace{-0.1em}\leq\hspace{-0.1em}
		\limsup_{\ell\to\infty}{a_{n_\ell}+b_{n_\ell}}-\lim_{\ell\to\infty}{a_{n_\ell}}\hspace{-0.1em}\leq\hspace{-0.1em}-a\hspace{-0.1em}<\hspace{-0.1em}0}$,~i.e., a contradiction, as then the non-conforming pseudo-monotonicity of
		${B_n:X_n\to X_n^*}$, $n\in \setN$, {choosing $z=v$,}
		implies $0\leq \liminf_{ n\to\infty}{b_n}<
		0$. Therefore, ${\limsup_{n\to\infty}{a_n}\leq
		0}$ and $\limsup_{n\to\infty}{b_n}\leq
		0$~hold,  and the non-conforming pseudo-monotonicity of the sequences
		${A_n,B_n:X_n\to  X_n^*}$,~${n\in \setN}$,
		implies $\langle Av,v-z\rangle_V\leq
		\liminf_{n\to\infty}{\langle A_{m_n}\!v_n,v_n
			-z\rangle_{X_{m_n}}}$ and
		$\langle Bv,v-z\rangle_V\leq
		\liminf_{n\to\infty}$ $ \langle B_{m_n}\!v_n, v_n
			-z\rangle_{X_{m_n}}$ for all $z\in V$. Summing these
		inequalities yields the assertion.  
	\end{proof}
	
	The following theorem gives sufficient conditions on a family of non-conforming pseudo-monotone operators such that the associated non-conforming approximation scheme is well-posed, stable, and weakly convergent to a solution of the problem to be approximated.
	
	\begin{theorem}\label{thm:non-conform_lim}
		Let  $(V_n)_{n\in \setN}$ be a  non-conforming approximation of $V$ with respect to $(X_n)_{n\in \setN}$ and $Y$. Moreover, let $A_n:X_n\to X_n^*$, $ n\in \setN$, be non-conforming pseudo-monotone with respect to $(V_n)_{n\in \setN}$ and $A:V\to V^*$ with~the~following~properties:
		\begin{itemize}
		    \item[(AN.1)]\hypertarget{AN.1}{} For every $n\in \setN$, $A_n:X_n\to X_n^*$ is pseudo-monotone.
		    \item[(AN.2)]\hypertarget{AN.2}{} There exists a
                      weakly coercive mapping
                      $\mathscr{C}\hspace{-0.1em}\colon\hspace{-0.1em}\mathbb{R}^{\ge
                        0}\hspace{-0.1em}\to\hspace{-0.1em}
                      \mathbb{R}$, i.e.,
                      ${\mathscr{C}(a)\hspace{-0.1em}\to\hspace{-0.1em}
                        \infty}$~${(a\hspace{-0.1em}\to\hspace{-0.1em}
                        \infty)}$, and a constant $c>0$ such that for all
                      $z_n\in V_n$ and $n\in \setN$, it holds\vspace*{-0.75mm}
                      \begin{align*}
                      \smash{\langle A_n z_n,z_n\rangle_{X_n}\ge
                      \mathscr{C}(\|z_n\|_{X_n})\|z_n\|_{X_n} -c\,.}
                      \end{align*}
		    \item[(AN.3)]\hypertarget{AN.3}{}  There exists a
                      non-decreasing mapping
                      $\mathscr{B}\colon\mathbb{R}^{\ge 0}\to
                      \mathbb{R}^{\ge 0}$ such that for all
                      $z_n\in V_n$ and $n\in \setN$, it holds\vspace*{-0.75mm}
                      \begin{align*}
                      \smash{\| A_n z_n\|_{X_n^*}\leq
                      \mathscr{B}(\|z_n\|_{X_n})\,.}
                        \end{align*}
		\end{itemize}
		Let $f\in V^*$ be a linear functional such that there exists a sequence  $f_n\in X_n^*$, $n\in \mathbb{N}$, with the following properties:
		\begin{itemize}
		    \item[(BN.1)]\hypertarget{BN.1}{} It holds $\sup_{n\in \setN} {\|f_n\|_{X_n^*}}<\infty$. 
		    \item[(BN.2)]\hypertarget{BN.2}{} For each sequence $v_n\hspace{-0.15em}\in\hspace{-0.15em} V_{m_n}$, $n\hspace{-0.15em}\in\hspace{-0.15em} \setN$, where $(m_n)_{n\in\setN}\hspace{-0.15em}\subseteq\hspace{-0.15em}\setN$ 
		with~${m_n\hspace{-0.15em}\to\hspace{-0.15em} \infty}$~${(n\hspace{-0.15em}\to\hspace{-0.15em}\infty)}$, satisfying \eqref{def:non-conform_pseudo.1} for some $v\hspace{-0.1em}\in \hspace{-0.1em} V$, it follows that ${\langle f_n,v_n\rangle_{X_{m_n}}\hspace{-0.1em}\to\hspace{-0.1em} \langle f,v\rangle_V}$~${(n\hspace{-0.1em}\to\hspace{-0.1em} \infty)}$.

		\end{itemize}
		Then, the following statements apply:
	    \begin{itemize}
	        \item[(i)] For every $n\in \setN$, there exists $v_n\in V_n$ such that $(\textup{id}_{V_n})^*A_nv_n=f_n$~in~$V_n^*$, i.e.,
		for every $z_n\in V_n$, it holds $\langle A_n v_n,z_n\rangle_{X_n}=\langle f_n,z_n\rangle_{V_n}$.
		\item[(ii)] It holds $\sup_{n\in\setN}{\|A_nv_n\|_{X_n^*}+\|v_n\|_{X_n}}<\infty$.
		\item[(iii)] There exists a subsequence $v_{m_n}\hspace{-0.15em}\in\hspace{-0.15em} V_{m_n}$, $n\hspace{-0.15em}\in\hspace{-0.15em} \mathbb{N}$,~where~${(m_n)_{n\in\setN}\hspace{-0.15em}\subseteq\hspace{-0.15em}\setN}$~with~${m_n\hspace{-0.15em}\to \hspace{-0.15em}\infty}$ $(n\to\infty)$, and $v\in V$ such that $v_{m_n}\weakto v$ $(n\to \infty)$ in $Y$ and $Av=f$~in~$V^*$. 
	    \end{itemize}
	\end{theorem}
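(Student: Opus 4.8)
The plan is to establish the three assertions successively, deriving (i) and (ii) by classical arguments at a fixed discretisation level and then obtaining (iii) by a limiting procedure that hinges on the non-conforming pseudo-monotonicity of $(A_n)_{n\in\setN}$. For (i), I would fix $n\in\setN$ and consider the restricted operator $T_n\coloneqq(\textup{id}_{V_n})^*A_n(\textup{id}_{V_n})\colon V_n\to V_n^*$. Since $\textup{id}_{V_n}\colon V_n\to X_n$ is a norm-preserving linear embedding, an unwinding of the defining condition shows that pseudo-monotonicity of $A_n$ (property (\hyperlink{AN.1}{AN.1})) is inherited by $T_n$, while (\hyperlink{AN.3}{AN.3}) makes $T_n$ bounded and (\hyperlink{AN.2}{AN.2}) together with the weak coercivity of $\mathscr{C}$ makes $T_n$ coercive, i.e.\ $\langle T_nz_n,z_n\rangle_{V_n}/\|z_n\|_{V_n}\to\infty$ as $\|z_n\|_{V_n}\to\infty$. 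Reflexivity of $V_n$ then allows me to invoke the main surjectivity theorem for bounded, coercive, pseudo-monotone operators (Brezis), producing $v_n\in V_n$ with $T_nv_n=(\textup{id}_{V_n})^*f_n$, which is precisely the asserted identity.

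For (ii), I would test this identity with $z_n=v_n$, obtaining $\langle A_nv_n,v_n\rangle_{X_n}=\langle f_n,v_n\rangle_{X_n}\leq\|f_n\|_{X_n^*}\,\|v_n\|_{X_n}$, and combine it with (\hyperlink{AN.2}{AN.2}) to get $\mathscr{C}(\|v_n\|_{X_n})\,\|v_n\|_{X_n}\leq\|f_n\|_{X_n^*}\,\|v_n\|_{X_n}+c$. If $(\|v_n\|_{X_n})_{n\in\setN}$ were unbounded, restricting to a subsequence tending to $\infty$ and using (\hyperlink{BN.1}{BN.1}) would keep $\mathscr{C}(\|v_n\|_{X_n})$ bounded, contradicting weak coercivity; hence $\sup_{n\in\setN}\|v_n\|_{X_n}<\infty$, and then $\sup_{n\in\setN}\|A_nv_n\|_{X_n^*}<\infty$ by (\hyperlink{AN.3}{AN.3}) and the monotonicity of $\mathscr{B}$.

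For (iii), (ii) permits applying (\hyperlink{NC.2}{NC.2}) to $(v_n)_{n\in\setN}$, yielding a subsequence---still denoted $v_{m_n}\in V_{m_n}$, $m_n\to\infty$---and some $v\in V$ with $v_{m_n}\weakto v$ in $Y$. The crux is to verify \eqref{def:non-conform_pseudo.2}. For $z$ in the dense set $D$ of (\hyperlink{NC.1}{NC.1}) I would choose $z_{m_n}\in V_{m_n}$ with $\|z_{m_n}-z\|_{X_{m_n}}\to0$; writing $v_{m_n}-v=(v_{m_n}-z_{m_n})+(z_{m_n}-v)$, testing the discrete equation with $v_{m_n}-z_{m_n}\in V_{m_n}$, and applying (\hyperlink{BN.2}{BN.2}) to $v_{m_n}-z_{m_n}\weakto v-z$ in $Y$ gives $\langle A_{m_n}v_{m_n},v_{m_n}-z_{m_n}\rangle_{X_{m_n}}=\langle f_{m_n},v_{m_n}-z_{m_n}\rangle_{X_{m_n}}\to\langle f,v-z\rangle_V$, whereas the remaining term is bounded by $\|A_{m_n}v_{m_n}\|_{X_{m_n}^*}\big(\|z_{m_n}-z\|_{X_{m_n}}+\|z-v\|_V\big)$, whose $\limsup$ is at most $C\,\|z-v\|_V$ by (ii). Sending $z\to v$ in $V$ (legitimate since $D$ is dense and $f\in V^*$) then yields \eqref{def:non-conform_pseudo.2}.

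Non-conforming pseudo-monotonicity of $(A_n)_{n\in\setN}$ now supplies, for every $z\in V$, the inequality $\langle Av,v-z\rangle_V\leq\liminf_{n\to\infty}\langle A_{m_n}v_{m_n},v_{m_n}-z\rangle_{X_{m_n}}$; the same splitting identifies this $\liminf$ as $\langle f,v-z\rangle_V$ for $z\in D$, and since both sides are affine and $\|\cdot\|_V$-continuous in $z$, the inequality $\langle Av,v-z\rangle_V\leq\langle f,v-z\rangle_V$ extends to all $z\in V$; replacing $z$ by $v\pm z$ gives $Av=f$ in $V^*$, completing (iii). I expect the main obstacle to be the genuine non-conformity: $v\in V$ is not an admissible test function for the discrete problems, so every passage to the limit must be routed through the density approximations of (\hyperlink{NC.1}{NC.1}), and their error terms are tamed only because of the uniform bound $\sup_n\|A_nv_n\|_{X_n^*}<\infty$ secured in (ii); a subsidiary point is to make the different modes of convergence (strong in $X_{m_n}$, weak in $Y$) interact correctly, which relies on the continuity of the embeddings $X_n\hookrightarrow Y$.
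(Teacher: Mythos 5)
Your proposal is correct, and parts (i) and (ii) coincide with the paper's argument (restriction of $A_n$ to $V_n$, Br\'ezis/Zeidler surjectivity theorem, testing with $v_n$ plus (AN.2), (BN.1), (AN.3)). In part (iii), however, you take a genuinely different route to the two key limit identities. The paper first observes that, by (ii), the restricted functionals $(\textup{id}_V)^*A_{m_n}v_{m_n}$ are bounded in $V^*$, extracts a weak limit $h\in V^*$, identifies $h=f$ by pairing with discrete approximations $z_{m_n}$ of $z$ in the dense set $D$, and then uses $h=f$ both to verify $\limsup_{n\to\infty}\langle A_{m_n}v_{m_n},v_{m_n}-v\rangle_{X_{m_n}}\le 0$ and to evaluate $\liminf_{n\to\infty}\langle A_{m_n}v_{m_n},v_{m_n}-z\rangle_{X_{m_n}}\le\langle f,v-z\rangle_V$ directly for \emph{all} $z\in V$. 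You avoid the auxiliary weak limit entirely: you obtain \eqref{def:non-conform_pseudo.2} by splitting $v_{m_n}-v=(v_{m_n}-z_{m_n})+(z_{m_n}-v)$, applying (BN.2) to the difference sequence $v_{m_n}-z_{m_n}$ and controlling the remainder by $\sup_n\|A_{m_n}v_{m_n}\|_{X_{m_n}^*}\,\|z-v\|_V$, then letting $z\to v$ over $D$; and you identify the $\liminf$ only for $z\in D$, extending the resulting inequality $\langle Av,v-z\rangle_V\le\langle f,v-z\rangle_V$ to all $z\in V$ by density and continuity of $Av,f\in V^*$ before concluding $Av=f$ via $z=v\pm\tilde z$. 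Your route is marginally more elementary (it does not invoke weak sequential compactness in $V^*$ beyond the bound from (ii)), at the price of the extra limit $z\to v$; the paper's route yields as a byproduct the weak convergence of the restricted residuals $(\textup{id}_V)^*A_{m_n}v_{m_n}\weakto f$ in $V^*$. The one step you share with the paper and treat at the same level of rigour is the passage from $\|z_{m_n}-z\|_{X_{m_n}}\to 0$ to $z_{m_n}\weakto z$ in $Y$ (the paper attributes it to (NC.2), you to the embeddings $X_n\subseteq Y$); this is not a gap relative to the published argument.
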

	
	\begin{proof}
	    \textit{(i).} Due to the conditions 
            (\hyperlink{AN.1}{AN.1})--(\hyperlink{AN.3}{AN.3}), for
            every fixed $n\in \mathbb{N}$, the operator ${A_n:X_n\to X_n^*}$ is pseudo-monotone, coercive and bounded, which, implies that also the restriction $(\textup{id}_{V_n})^*A_n:V_n\to V_n^*$ is pseudo-monotone, coercive and bounded.~As~a~result, since the spaces $(V_n)_{n\in \setN}$ are reflexive,
	    the main theorem on pseudo-monotone operators, cf.~\cite{zei-IIB}, yields the existence of $v_n\in V_n$~such~that~${(\textup{id}_{V_n})^*A_nv_n=f_n}$~in~$V_n^*$. 
	    
	    \textit{(ii).} For every $n\in \setN$, using (\hyperlink{AN.2}{AN.2}) and (i),~we~find~that
	   \begin{align*}
	     \mathscr{C}(\|v_n\|_{X_n})\|v_n\|_{X_n} -c\leq  \langle A_n v_n,v_n\rangle_{X_n}= \langle f_n,v_n\rangle_{V_n}\leq \|f_n\|_{V_n^*}\|v_n\|_{X_n}\,.
	   \end{align*}
	   Thus, (\hyperlink{BN.1}{BN.1})  and the  weakly coercivity of 
           $\mathscr{C}\colon\mathbb{R}^{\ge 0}\to \mathbb{R}$ imply
           $\sup_{n\in\setN}{\|v_n\|_{X_n}}<\infty$. Using
           (\hyperlink{AN.3}{AN.3}), we conclude that also $\sup_{n\in\setN}{\|A_nv_n\|_{X_n^*}}<\infty$.
	   
	   \textit{(iii).} Appealing to (ii) and (\hyperlink{NC.2}{NC.2}), there exists a subsequence $v_{m_n}\in V_{m_n}$, $n\in \mathbb{N}$, where $(m_n)_{n\in\setN}\hspace{-0.16em}\subseteq\hspace{-0.16em}\setN$ with $m_n\hspace{-0.16em}\to\hspace{-0.16em} \infty$ $(n\hspace{-0.16em}\to\hspace{-0.16em}\infty)$, and $v\hspace{-0.16em}\in\hspace{-0.16em} V$ such that ${v_{m_n}\hspace{-0.16em}\weakto\hspace{-0.16em} v}$~in~$Y$~${(n\hspace{-0.16em}\to\hspace{-0.16em} \infty)}$. 
	   In addition, (ii) implies ${\sup_{n\in\setN}{\|(\textup{id}_V)^*A_nv_n\|_{V^*}}\hspace{-0.1em}\leq\hspace{-0.1em} \sup_{n\in\setN}{\|A_nv_n\|_{X_n^*}}\hspace{-0.1em}<\hspace{-0.1em} \infty}$\footnote{Here, for every $n\in \mathbb{N}$, $(\textup{id}_V)^*:X_n^*\to V^*$ denote the adjoint operator to $\textup{id}_V:V\to X_n$.\vspace{-12mm}}, 
	   so that, by the reflexivity of $V^*$, there exist a not relabeled subsequence~and~some~${h\in V^*}$~such~that
	   \begin{align}
	       (\textup{id}_V)^*A_{m_n}\!v_{m_n}\weakto h\quad\textrm{ in }V^*\quad(n\to \infty)\,.\label{thm:non-conform_lim.5}
	   \end{align}
	   Let $z\in D$ be arbitrary. Then, (\hyperlink{NC.1}{NC.1}) yields a sequence $z_n\in V_n$, $n\in \mathbb{N}$, such that ${\|z_n-z\|_{X_n}\!\to\! 0}$ $(n\!\to\! \infty)$. In addition, (\hyperlink{NC.2}{NC.2}) yields that
	   $z_n\!\weakto\! z$~in~$Y$~${(n\!\to\! \infty)}$,~so~that, resorting to (\hyperlink{BN.2}{BN.2}), we deduce that
	   \begin{align}
	       \langle f_{m_n},z_{m_n}\rangle_{X_{m_n}}\to \langle f,z\rangle_V\quad(n\to \infty)\,.\label{thm:non-conform_lim.6}
	   \end{align}
	   On the other hand, taking into account (ii) and \eqref{thm:non-conform_lim.5}, we deduce that
	   \begin{align}
	   \begin{aligned}
	       \vert \langle A_{m_n}\!v_{m_n},z_{m_n}\rangle_{X_{m_n}} -\langle h,z\rangle_V\vert 
	       &\leq  \| A_{m_n}\!v_{m_n}\|_{X_{m_n}^*}\|z_{m_n}-z\|_{X_{m_n}} \\&\quad +\vert \langle(\textup{id}_V)^*A_{m_n}\!v_{m_n}- h,z\rangle_V\vert\to 0\quad(n\to\infty)\,.
	       \end{aligned}\label{thm:non-conform_lim.7}
	   \end{align}
	   Since $\langle A_{m_n}\!v_{m_n},z_{m_n}\rangle_{X_{m_n}}=\langle f_{m_n},z_{m_n}\rangle_{X_{m_n}}$ for all $n\in \setN$ (cf.~(i)) and $z\in D$ was chosen arbitrary, combining \eqref{thm:non-conform_lim.6} and \eqref{thm:non-conform_lim.7}, we find that~${\langle f,z\rangle_V=\langle h,z\rangle_V}$~for~all~${z\in D}$, i.e., due to the density of $D$ in $V$, it holds $f =h$ in $V^*$. Thus, using (\hyperlink{BN.2}{BN.2})~and~\eqref{thm:non-conform_lim.5},~we~get
	   \begin{align*}
	       \limsup_{n\to \infty}{\langle A_{m_n}\!v_{m_n},v_{m_n}\!-v\rangle_{X_{m_n}}}&= \limsup_{n\to \infty}{\big[\langle A_{m_n}\!v_{m_n},v_{m_n}\rangle_{X_{m_n}}\!-\langle (\textup{id}_V)^*A_{m_n}v_{m_n},v\rangle_V\big]}
	       \\[-0.5mm]&=
	       \lim_{n\to \infty}{\langle f_{m_n}v_{m_n},v_{m_n}\rangle_{X_{m_n}}}-\langle f,v\rangle_V=0\,,
	   \end{align*}
	   so that the non-conforming pseudo-monotonicity of $A_n:X_n\to X_n^*$, $n\in \setN$, (\hyperlink{BN.2}{BN.2}),~(ii) and \eqref{thm:non-conform_lim.5}, for every $z\in V$, imply that
	   \begin{align}
	       \label{thm:non-conform_lim.8}
	       \langle Av,v-z\rangle_V&\leq \liminf_{n\to \infty}{\langle A_{m_n}\!v_{m_n},v_{m_n}-z\rangle_{X_{m_n}}}
	       \\[-0.5mm]&= \lim_{n\to \infty}{\big[\langle f_{m_n},v_{m_n}\rangle_{X_{m_n}}}-\lim_{n\to \infty}{\langle (\textup{id}_V)^*A_{m_n}\!v_{m_n},z\rangle_V\big]}
	       \leq \langle f,v-z\rangle_V\,.\notag
	   \end{align}
	   Eventually, choosing $z=v\pm\tilde{z}\in V$ for arbitrary $\tilde{z}\in V$ in \eqref{thm:non-conform_lim.8}, we conclude that $\langle Av-f,\tilde{z}\rangle= 0$ for all $\tilde{z}\in V$, i.e., $Av=f$ in $V^*$.
	\end{proof}
	
\section{Well-posedness, \hspace{-0.1mm}stability \hspace{-0.1mm}and \hspace{-0.1mm}weak \hspace{-0.1mm}convergence}\label{sec:application}

\!\!In \hspace{-0.1mm}this~\hspace{-0.1mm}section,~\hspace{-0.1mm}we~\hspace{-0.1mm}prove the existence of a solution of Problem
(Q$_h$) (cf.~\eqref{eq:primal0.1}, \eqref{eq:primal0.2}), Problem
(P$_h$) (cf.~\eqref{eq:primal}), and the operator equation 
\eqref{eq:op} (i.e., their well-posedness), resp., their stability
(i.e., a priori estimates), and the weak convergence of the discrete
solutions to a solution of Problem (Q) (cf.~\eqref{eq:q1}, \eqref{eq:q2}) and
Problem (P) (cf.~\eqref{eq:p}), resp. To this end, we want to apply the abstract
framework presented in Section \ref{sec:non-conform}~to~the~shifted~problem~\eqref{eq:op} and, thus,  we must first show that
the discretely divergence-free
spaces~$(V_{h_n}^k\hspace{-0.1em}(0))_{n\in \setN}$, where
$(h_n)_{n\in \setN}\hspace{-0.1em}\subseteq\hspace{-0.1em} \mathbb{R}_{>0}$
satisfies $h_n\hspace{-0.1em}\to\hspace{-0.1em} 0$
$(n\hspace{-0.1em}\to\hspace{-0.1em} \infty)$, form a non-conforming
approximation~of $\Vo(0)$  with respect to $(\WDGn)_{n\in \setN}$ and
$L^p(\Omega)$, i.e., $\Vo(0)$ takes to role of $V$, $(\WDGn)_{n\in
  \setN}$ of $(X_n)_{n\in \setN}$, $L^p(\Omega)$ of $Y$, and
$(V_{h_n}^k\hspace{-0.1em}(0))_{n\in \setN}$ of $(V_n)_{n\in \setN}$
in Definition~\ref{def:non-conform_approx}.

\begin{lemma}\label{lem:nonconform}
    Let $(h_n)_{n\in \setN}\hspace{-0.1em}\subseteq\hspace{-0.1em} \mathbb{R}_{>0}$ be such that $h_n\hspace{-0.1em}\to\hspace{-0.1em} 0$ $(n\hspace{-0.1em}\to\hspace{-0.1em} \infty)$~and~let~${p\hspace{-0.1em}\in\hspace{-0.1em} (1,\infty)}$. Then, the sequence 
    $(V_{h_n}^k\hspace{-0.1em}(0))_{n\in \setN}$ is a non-conforming approximation of $\Vo(0)$ with respect to $(\WDGn)_{n\in \setN}$ and $L^p(\Omega)$. 
\end{lemma}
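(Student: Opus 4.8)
The plan is to verify the two defining conditions of Definition~\ref{def:non-conform_approx}; the structural hypotheses are immediate, since $\Vo(0)\subseteq W^{1,p}(\Omega)\subseteq\WDGn$ and $\Vhnk(0)\subseteq\WDGn$ for all $n\in\setN$, the embedding $\WDGn\hookrightarrow L^p(\Omega)$ holds by the footnote estimate $\norm{\bfw_h}_p\le c\,\norm{\bfw_h}_{\nabla,p,h_n}$, and for $\bfv\in\Vo$ one has $\Ghnk\bfv=\nabla\bfv$ and $\jump{\bfv\otimes\bfn}=\mathbf 0$, so $\norm{\bfv}_{\nabla,p,h_n}=\norm{\nabla\bfv}_p$, i.e.~the norms of $\Vo(0)$ and of $\WDGn$ agree on $\Vo(0)$.

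\emph{Condition (\hyperlink{NC.1}{NC.1}).} I would take $D\coloneqq\{\bfv\in C^\infty_0(\Omega)^d\mid\divo\bfv=0\}$, which is dense in $\Vo(0)$ by the classical density of smooth, compactly supported, solenoidal fields (cf.~\cite{john}), and, for $\bfv\in D$, set $\bfv_n\coloneqq\PiDGn\bfv\in\Vhnk$. Since $\bfv\in W^{1,p}_0(\Omega)$ with $\divo\bfv=0$, identity~\eqref{eq:div-dg} gives $\bighskp{\Divhnk\PiDGn\bfv}{z_h}=(\divo\bfv,z_h)=0$ for all $z_h\in\Qhnkc$, hence $\bfv_n\in\Vhnk(0)$. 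The convergence $\norm{\bfv_n-\bfv}_{\nabla,p,h_n}\to 0$ then follows from the standard elementwise approximation and trace estimates for the local $L^2$-projection (cf.~\cite{dkrt-ldg}): as $\PiDGn$ reproduces affine functions on each simplex ($k\in\setN$), $\norm{\nabla_{h_n}(\PiDGn\bfv-\bfv)}_p\le c\,h_n\norm{\bfv}_{2,p}$ and, using $\jump{\bfv\otimes\bfn}=\mathbf 0$, also $h_n^{1/p}\big\|h_n^{-1}\jump{(\PiDGn\bfv-\bfv)\otimes\bfn}\big\|_{p,\Gamma_{h_n}}\le c\,h_n\norm{\bfv}_{2,p}$, so $\norm{\bfv_n-\bfv}_{\nabla,p,h_n}\le c\,h_n\norm{\bfv}_{2,p}\to0$.

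\emph{Condition (\hyperlink{NC.2}{NC.2}).} Let $(\bfv_n)$ be a sequence as in the definition; suppressing the subsequence index, write $\bfv_n\in\Vhnk(0)$ with $M\coloneqq\sup_n\norm{\bfv_n}_{\nabla,p,h_n}<\infty$ and $h_n\to 0$. From $\norm{\bfw_h}_p\le c\,\norm{\bfw_h}_{\nabla,p,h_n}$ we get $\sup_n\norm{\bfv_n}_p\le cM$, and from \eqref{eq:eqiv0} $\sup_n\norm{\Ghnk\bfv_n}_p\le cM$ together with $\norm{\jump{\bfv_n\otimes\bfn}}_{p,\Gamma_{h_n}}\le c\,h_n^{1-1/p}M\to0$. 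As $1<p<\infty$, a (not relabeled) subsequence yields $\bfv\in L^p(\Omega)$ and $\bfG^*\in L^p(\Omega)^{d\times d}$ with $\bfv_n\weakto\bfv$ and $\Ghnk\bfv_n\weakto\bfG^*$ in $L^p(\Omega)$, hence also $\Divhnk\bfv_n=\textup{tr}(\Ghnk\bfv_n)\weakto\textup{tr}(\bfG^*)$ in $L^p(\Omega)$. The crucial step is a consistency identity: for $\varphi\in C^\infty(\overline{\Omega})^{d\times d}$, testing \eqref{eq:DGnablaR1} with $\bfX_h=\PiDGn\varphi$ --- legitimate since $\Ghnk\bfv_n\in\Xhnk$ and $\PiDGn$ is the self-adjoint $L^2$-projection onto $\Xhnk$ --- and performing elementwise integration by parts, the jump terms $\bigskp{\jump{\bfv_n\otimes\bfn}}{\avg{\PiDGn\varphi}}_{\Gamma_{h_n}}$ cancel and $\jump{\varphi\bfn}=\mathbf 0$ on interior faces, so that
\begin{align*}
  \bighskp{\Ghnk\bfv_n}{\varphi}=-\sum_{K\in\mathcal{T}_{h_n}}\int_K\bfv_n\cdot\divo(\PiDGn\varphi)\,\textup{d}x+\bigskp{\avg{\bfv_n}}{\jump{(\PiDGn\varphi-\varphi)\bfn}}_{\Gamma_{h_n}^{i}}\,.
\end{align*}

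I would then pass to the limit in this identity. Since $\nabla_{h_n}\PiDGn\varphi\to\nabla\varphi$ strongly in $L^{p'}(\Omega)$ (standard properties of $\PiDGn$, $k\in\setN$), the volume term converges to $-\hskp{\bfv}{\divo\varphi}$, while the interior-face term is $O(h_n)$ by a discrete trace inequality together with the elementwise bound $\norm{\PiDGn\varphi-\varphi}_{\infty,K}\le c\,h_K^2\norm{D^2\varphi}_\infty$ and $\sum_{K\in\mathcal{T}_{h_n}}h_K^d\le c\,\abs{\Omega}$. Hence $\hskp{\bfG^*}{\varphi}=-\hskp{\bfv}{\divo\varphi}$ for all $\varphi\in C^\infty(\overline{\Omega})^{d\times d}$: taking $\varphi\in C^\infty_0(\Omega)^{d\times d}$ identifies $\bfv\in W^{1,p}(\Omega)^d$ with $\nabla\bfv=\bfG^*$, and then the $W^{1,p}$-trace formula applied to the displayed identity for general $\varphi$ forces $\skp{\bfv}{\varphi\bfn}_{\partial\Omega}=0$, i.e.~$\bfv\in\Vo$. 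Finally, $\Divhnk\bfv_n\weakto\textup{tr}(\nabla\bfv)=\divo\bfv$ in $L^p(\Omega)$, while $\bfv_n\in\Vhnk(0)$ means $\bighskp{\Divhnk\bfv_n}{z_h}=0$ for all $z_h\in\Qhnkc$; choosing $z_h$ to be the Lagrange interpolant of an arbitrary $\psi\in C^\infty(\overline{\Omega})$ (so $z_h\to\psi$ in $L^{p'}(\Omega)$) gives $\hskp{\divo\bfv}{\psi}=0$, hence $\divo\bfv=0$ and $\bfv\in\Vo(0)$. Since $\bfv_n\weakto\bfv$ in $Y=L^p(\Omega)$, this proves (\hyperlink{NC.2}{NC.2}). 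I expect the main obstacle to be precisely this consistency identity and its limit passage: one must track the elementwise integration-by-parts remainders and the projection mismatch $\PiDGn\varphi-\varphi$ (only $O(h^2)$ in $L^\infty$ per element, which is exactly what is needed to absorb the $h^{-1/p}$ loss of the discrete trace inequality after pairing elementwise and summing over the simplices), and one must work throughout with $\Ghnk\bfv_n$ rather than $\nabla_{h_n}\bfv_n$, since only the former is known to admit a weak $L^p$-limit.
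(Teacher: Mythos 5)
Your proof is correct, and its skeleton coincides with the paper's: the same dense set $D$ of smooth solenoidal fields with $\bfv_n=\PiDGn\bfv$ and the $O(h_n)$ estimate for (\hyperlink{NC.1}{NC.1}), and weak $L^p$-compactness plus identification of the limit for (\hyperlink{NC.2}{NC.2}). The one genuine difference is in (\hyperlink{NC.2}{NC.2}): the paper outsources the key step -- that a DG-norm-bounded sequence has a subsequence converging weakly in $L^p(\Omega)$ to a limit in $\Vo$, with $\Ghnk\bfv_n\weakto\nabla\bfv$ -- to the cited compactness result \cite[Lemma A.37]{kr-phi-ldg}, whereas you re-derive it from scratch via the consistency identity obtained by testing \eqref{eq:DGnablaR1} with $\PiDGn\varphi$ for smooth tensor fields $\varphi$, elementwise integration by parts, and the local $O(h_K^2)$ projection error that compensates the $h_K^{-1/p}$ discrete trace loss; this simultaneously identifies $\bfG^*=\nabla\bfv$ and the vanishing boundary trace. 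For the divergence constraint you also argue slightly differently: you pass $\Divhnk\bfv_n=\textup{tr}(\Ghnk\bfv_n)\weakto\divo\bfv$ weakly in $L^p(\Omega)$ and pair with interpolants converging only in $L^{p'}(\Omega)$, while the paper rewrites $(\Divhnk\bfv_n,z_n)=-(\bfv_n,\nabla z_n)$ via \eqref{eq:div-dg.0} and uses $W^{1,2}$-convergence of the nodal interpolants. Your route is longer but self-contained (it essentially reproves the cited lemma), and your local treatment of the face term is consistent with mere shape regularity; the paper's route is shorter at the price of relying on the external reference.
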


\begin{proof}
    Clearly, $(\Vo(0),\|\nabla\cdot\|_p)$, $(L^p(\Omega),\|\cdot\|_p)$ and $(\WDGn,\|\cdot\|_{\nabla,p,h})$, $n\in \setN$, are Banach spaces with $\|\cdot\|_{\nabla,p,h_n}=\|\nabla\cdot\|_p$ on $\Vo(0)$ and $\Vo(0)\subseteq \WDGn\subseteq L^p(\Omega)$ for all $n\in \setN$. In particular, $\Vo(0)$ and $V_{h_n}^k(0)$, $n\in \setN$, are reflexive Banach spaces.
    
	So, let us verify  (\hyperlink{NC.1}{NC.1}) 
	and (\hyperlink{NC.2}{NC.2}):
	
    \textit{(\hyperlink{NC.1}{NC.1}).} For $\bfv \in D\coloneqq \{\bfw\hspace{-0.1em}\in\hspace{-0.1em} C^\infty_0(\Omega)\mid \divo\bfw\hspace{-0.1em}=\hspace{-0.1em}0\}$, we set   
	${\bfv_n\coloneqq \smash{\Uppi_{h_n}^k}\!\bfv\in
        \smash{V_{h_n}^k(0)}}$, ${n\in \setN}$ (cf.~\eqref{eq:div-dg}). 
        In addition, by the approximation properties~of~$\PiDG$ 
	(cf.~\cite[Corollary~A.8, Corollary A.19]{kr-phi-ldg}), it holds $\|\bfv-\bfv_n\|_{\nabla,p,h_n}
	\leq 
	c\, h_n\|\bfv\|_{W^{2,p}(\Omega)}\to 0$ $(n\to\infty)$. 
	
        \textit{(\hyperlink{NC.2}{NC.2}).} Let
        $\bfv_n\in \smash{V_{h_{m_n}}^k\!(0)}$, $n\in \setN$, where
        $(m_n)_{n\in\setN}\subseteq
        \setN$~with~${m_n\to \infty}$~${(n\to \infty)}$, be such that
        $\sup_{n\in
          \setN}{\|\bfv_n\|_{\nabla,p,h_{m_n}}}\!\!<\!\infty$. Then,~appealing~to~\cite[Lemma~A.37]{kr-phi-ldg},~there~exist
        a subsequence $(\bfv_{n_\ell})_{\ell \in \setN}$ of
        $(\bfv_n)_{n\in \setN}$ and an element $\bfv\in \Vo$ such that
	\begin{align*}
	         \bfv_{n_\ell}\weakto       \bfv\quad\textrm{ in }L^p(\Omega)\quad(\ell\to \infty)\,.
	\end{align*}
	Next, let us verify that $\bfv\in \Vo(0)$. To this end, we
    take an arbitrary $z\in C_0^\infty(\Omega)$~and~set
    $\smash{z_n\coloneqq \mathcal{I}_{h_{m_n}}^kz
    \in
      Q_{h_{m_n},c}^k}$,
    $n\in
    \setN$,~where~$\smash{\mathcal{I}_{h_{m_n}}^k\colon C^0(\overline{\Omega})\to
      Q_{h_{m_n},c}^k}$~denotes~the nodal
    interpolation operator. Due to \cite[(11.17)]{EG21}, it
    holds $z_n\to z$~in~$W^{1,2}(\Omega)$~${(n\to \infty)}$. Since
    $(\bfv_{n_\ell},\nabla z_{n_\ell})\!=\!-(\smash{\Divhnk}\bfv_{n_\ell}, z_{n_\ell})=0$ for all
    $\ell\!\in\! \setN$, cf. \eqref{eq:div-dg.0}, we~conclude, by passing for $\ell\!\to\! \infty$, that
    $(\divo\bfv, z)=-(\bfv,\nabla z)=0$ for all
    $z\in C_0^\infty(\Omega)$,~i.e.,~it~holds~${\bfv\in \Vo(0)}$.
\end{proof}

Next, we show that the operators satisfy the assumptions of Theorem \ref{thm:non-conform_lim}.

\begin{lemma}\label{lem:ldg_stress}
    Let $(h_n)_{n\in \setN}\subseteq \mathbb{R}_{>0}$ be such that $h_n\to 0$ $(n\to \infty)$~and~let~${p\in (1,\infty)}$.
    Then, the sequence 
    $\smash{S_{h_n}^k}\colon\! \WDGn\!\to\! (\WDGn)^*$, $n\!\in\!
    \setN$, defined in \eqref{eq:bS}, satisfies
    (\hyperlink{AN.1}{AN.1})--(\hyperlink{AN.3}{AN.3})~and is
    non-conforming pseudo-monotone with respect to $(V_{h_n}^k(0))_{n
      \in\mathbb{N}} $ and ${S\colon\!\Vo(0)\!\to\!\Vo(0)^*}$ defined via
    $\langle S\bfu,\bfz\rangle_{\smash{\Vo(0)}}\!\coloneqq \!\hskp{\SSS
      (\bfD\bfu\!+\!\bfD\bfv^*\hspace{-0.5mm})}{\hspace{-0.2mm}\bfD\bfz}$  for every ${\bfu,\bfz\!\in\! \Vo(0)}$.
\end{lemma}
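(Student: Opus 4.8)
The plan is to split $S_{h_n}^k=S_{h_n}^{k,\mathrm v}+S_{h_n}^{k,\mathrm j}$ into the volume part $\langle S_{h_n}^{k,\mathrm v}\bfw,\bfz\rangle\coloneqq\hskp{\SSS(\Dhnk\bfw+[\bfH_{h_n}^k\bfv^*]^{\textup{sym}})}{\Dhnk\bfz}$ and the interior penalty part $S_{h_n}^{k,\mathrm j}$, and to treat the two summands separately. For (\hyperlink{AN.1}{AN.1}), i.e., pseudo-monotonicity of $S_{h_n}^k\colon\WDGn\to(\WDGn)^*$ for a \emph{fixed} $n$: the summand $S_{h_n}^{k,\mathrm v}$ is the composition of the bounded linear map $\Dhnk\colon\WDGn\to L^p(\Omega)$, the Nemytski\u{\i} operator of $\SSS$ translated by the fixed function $[\bfH_{h_n}^k\bfv^*]^{\textup{sym}}$, and $(\Dhnk)^*$; it is therefore monotone by \eqref{eq:ass_S}$_1$, bounded by \eqref{eq:ass_S}$_2$, and demicontinuous (continuity of $\SSS$ together with a Vitali argument), hence pseudo-monotone (cf.~\cite{zei-IIB}). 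The summand $S_{h_n}^{k,\mathrm j}$ is in fact strongly continuous: for fixed $n$ the trace operator $\WDGn\to L^p(\Gamma_{h_n})$ is compact, since elementwise $W^{1,p}(K)\hookrightarrow L^p(\partial K)$ is compact for $p>1$, so $\bfw_j\weakto\bfw$ in $\WDGn$ implies $\jump{(\bfw_j+\bfh_{h_n}^k\bfv^*)\otimes\bfn}\to\jump{(\bfw+\bfh_{h_n}^k\bfv^*)\otimes\bfn}$ in $L^p(\Gamma_{h_n})$, while the shift $\avg{\abs{\Pian(\Dhnk\bfw_j+[\bfH_{h_n}^k\bfv^*]^{\textup{sym}})}}$ converges because $\Pian\Dhnk$ has finite-dimensional range; the joint continuity and growth of $(a,\bfM)\mapsto\SSS_a(\bfM)$ then yield $S_{h_n}^{k,\mathrm j}\bfw_j\to S_{h_n}^{k,\mathrm j}\bfw$ in $(\WDGn)^*$, and strongly continuous operators are pseudo-monotone. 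Since sums of pseudo-monotone operators are pseudo-monotone, (\hyperlink{AN.1}{AN.1}) follows.

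For (\hyperlink{AN.2}{AN.2}) and (\hyperlink{AN.3}{AN.3}) I would test $S_{h_n}^k$ with $\bfz_n\in V_{h_n}^k(0)$ and essentially reproduce the coercivity and boundedness estimates of \cite{dkrt-ldg}. Writing $\bfW_n\coloneqq\Dhnk\bfz_n+[\bfH_{h_n}^k\bfv^*]^{\textup{sym}}$, $\bfB_n\coloneqq h_n^{-1}\jump{(\bfz_n+\bfh_{h_n}^k\bfv^*)\otimes\bfn}$ and $a_n\coloneqq\avg{\abs{\Pian\bfW_n}}$, the volume term is $\ge c\,\hskp{\phi_{\abs{\bfW_n}}(\abs{\bfW_n})}{1}-\lvert\text{lift}\rvert$ by \eqref{eq:ass_S}$_1$ (vanishing second argument, $\SSS(\mathbf 0)=\mathbf 0$), and the penalty term equals $\alpha h_n\skp{\SSS_{a_n}(\bfB_n)}{\bfB_n}_{\Gamma_{h_n}}-\alpha\skp{\SSS_{a_n}(\bfB_n)}{\jump{\bfh_{h_n}^k\bfv^*\otimes\bfn}}_{\Gamma_{h_n}}$ with nonnegative first summand $\ge c\alpha h_n\skp{\phi_{a_n}(\abs{\bfB_n})}{1}_{\Gamma_{h_n}}$. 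That the shift $a_n$ is precisely the elementwise-constant face average of $\abs{\bfW_n}$ is what lets the sum of these two nonnegative modular contributions control — after $\varepsilon$-Young absorption (see \eqref{ineq:young}) of the lift/projection-error terms, which are bounded uniformly in $n$ by the approximation properties of $\PiDGn$ — a weakly coercive function of $\|\bfz_n\|_{\nabla,p,h_n}$, upon invoking Proposition~\ref{korn} and Proposition~\ref{equivalences}; this gives (\hyperlink{AN.2}{AN.2}). For (\hyperlink{AN.3}{AN.3}), the growth bound \eqref{eq:ass_S}$_2$, the $L^p$-stability of $\Pian$, trace inequalities, and the $h_n$-scaling of $\|\cdot\|_{\nabla,p,h_n}$ yield $\|S_{h_n}^k\bfz_n\|_{(\WDGn)^*}\le\mathscr B(\|\bfz_n\|_{\nabla,p,h_n})$ with $\mathscr B$ non-decreasing and independent of $n$.

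For the non-conforming pseudo-monotonicity, let $\bfv_n\in V_{h_{m_n}}^k(0)$ satisfy \eqref{def:non-conform_pseudo.1}–\eqref{def:non-conform_pseudo.2}; by Lemma~\ref{lem:nonconform} we have $\bfv\in\Vo(0)$, and by the compactness properties of DG-norm-bounded sequences used there (cf.~\cite{kr-phi-ldg}) there is a subsequence (not relabelled; the mesh parameter being $h_{m_n}$) along which $\Ghnk\bfv_n\weakto\nabla\bfv$, hence $\Dhnk\bfv_n\weakto\bfD\bfv$, in $L^p(\Omega)$, while $[\bfH_{h_n}^k\bfv^*]^{\textup{sym}}\to\bfD\bfv^*$ and $\bfh_{h_n}^k\bfv^*\to\mathbf 0$ in $L^p(\Omega)$ with $\|\bfh_{h_n}^k\bfv^*\|_{\nabla,p,h_{m_n}}\to0$. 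Since every $\bfz\in\Vo(0)\subseteq(W^{1,p}_0(\Omega))^d$ (and likewise $\bfv$) has vanishing jumps, so that $\Dhnk\bfz=\bfD\bfz$, we split $\langle S_{h_{m_n}}^k\bfv_n,\bfv_n-\bfv\rangle=I_n+II_n$ with $I_n\coloneqq\hskp{\SSS(\bfW_n)}{\Dhnk\bfv_n-\bfD\bfv}$, $\bfW_n\coloneqq\Dhnk\bfv_n+[\bfH_{h_n}^k\bfv^*]^{\textup{sym}}$, and $II_n$ the penalty term, which equals $\alpha h_{m_n}\skp{\SSS_{a_n}(\bfB_n)}{\bfB_n}_{\Gamma_{h_{m_n}}}-\alpha\skp{\SSS_{a_n}(\bfB_n)}{\jump{\bfh_{h_n}^k\bfv^*\otimes\bfn}}_{\Gamma_{h_{m_n}}}\ge-o(1)$, the first summand being nonnegative and the second tending to $0$ by the $h$-scaling of the norms and $\|\bfh_{h_n}^k\bfv^*\|_{\nabla,p,h_{m_n}}\to0$. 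Hence $\limsup_nI_n\le0$. Setting $\bfW\coloneqq\bfD\bfv+\bfD\bfv^*$, one has $\bfW_n\weakto\bfW$ in $L^p(\Omega)$ and, by \eqref{eq:ass_S}$_2$, $\bfG_n\coloneqq\SSS(\bfW_n)$ is bounded in $L^{p'}(\Omega)$, so $\bfG_n\weakto\bfG$ along a further subsequence; from $\limsup_n\hskp{\bfG_n}{\bfW_n-\bfW}\le0$ (using the strong convergence of the lift) and the monotonicity \eqref{eq:ass_S}$_1$ one gets $\hskp{\SSS(\bfW_n)-\SSS(\bfW)}{\bfW_n-\bfW}\to0$, whence, by \eqref{eq:ass_S}$_1$ and the almost-uniform convexity of the $(p,\delta)$-structure (cf.~\cite{dr-nafsa}), $\bfW_n\to\bfW$, i.e., $\Dhnk\bfv_n\to\bfD\bfv$, in $L^p(\Omega)$, and therefore $\bfG_n\to\SSS(\bfD\bfv+\bfD\bfv^*)$ in $L^{p'}(\Omega)$. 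Consequently, for arbitrary $\bfz\in\Vo(0)$ the volume part $\hskp{\bfG_n}{\Dhnk\bfv_n-\bfD\bfz}$ of $\langle S_{h_{m_n}}^k\bfv_n,\bfv_n-\bfz\rangle$ converges (strong times weak) to $\hskp{\SSS(\bfD\bfv+\bfD\bfv^*)}{\bfD(\bfv-\bfz)}=\langle S\bfv,\bfv-\bfz\rangle_{\Vo(0)}$, while its penalty part is $\ge-o(1)$ exactly as for $II_n$; hence $\liminf_n\langle S_{h_{m_n}}^k\bfv_n,\bfv_n-\bfz\rangle\ge\langle S\bfv,\bfv-\bfz\rangle_{\Vo(0)}$, first along this subsequence and then, by the standard subsequence principle, along the original sequence.

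The main obstacle is the interplay between the solution-dependent shift $a_n=\avg{\abs{\Pian\bfW_n}}$ and the $h_n$-scaling: establishing the combined coercivity (\hyperlink{AN.2}{AN.2}) over the full range $p\in(1,\infty)$ — which is exactly what the particular shifted flux \eqref{def:flux-S} is designed to make possible, the volume and penalty modulars being individually insufficient for $p<2$ — and checking that $\skp{\SSS_{a_n}(\bfB_n)}{\jump{\bfh_{h_n}^k\bfv^*\otimes\bfn}}_{\Gamma_{h_n}}\to0$, where the powers of $h_n$ produced by $\|\SSS_{a_n}(\bfB_n)\|_{p',\Gamma_{h_n}}$ must be matched against $\|\bfh_{h_n}^k\bfv^*\|_{\nabla,p,h_n}\to0$. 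The remaining nontrivial point is the passage from the modular convergence $\hskp{\phi_{\abs{\bfW_n}}(\abs{\bfW_n-\bfW})}{1}\to0$ to the strong $L^p(\Omega)$-convergence of $\bfW_n$ (equivalently of $\Dhnk\bfv_n$).
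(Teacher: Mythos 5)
Your proposal is correct, and in the key step it takes a genuinely different route from the paper. For (\hyperlink{AN.2}{AN.2})--(\hyperlink{AN.3}{AN.3}) you do essentially what the paper does: both defer the combined volume-plus-penalty coercivity with the solution-dependent shift to the estimates of \cite{dkrt-ldg} resp.\ \cite[Lemma 4.1]{kr-phi-ldg}, followed by the norm equivalence \eqref{eq:equi2} and the discrete Korn inequality \eqref{eq:equi1}; for (\hyperlink{AN.1}{AN.1}) your splitting into a monotone, hemicontinuous volume part plus a strongly continuous penalty part (compact element-wise traces, finite-dimensional range of $\Pian$, joint continuity of $(a,\bfA)\mapsto \SSS_a(\bfA)$) is a finer justification than the paper's one-line appeal to continuity and monotonicity of the whole operator, and both yield pseudo-monotonicity for fixed $n$. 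The genuine difference is the non-conforming pseudo-monotonicity: the paper runs a Minty--Browder argument, testing with the convex combinations $\bfz_s=(1-s)\bfu+s\bfz$, using only the monotonicity of $\SSS$ and $\SSS_a$, the vanishing of $\jump{\bfu\otimes\bfn}$ and $\jump{\bfz\otimes\bfn}$, and the cross-term limit \eqref{eq:ldg.6}, and then divides by $s$ and lets $s\to0$ by hemicontinuity, so that no strong convergence of discrete gradients is ever claimed inside the lemma. You instead use \eqref{eq:ass_S}$_1$ quantitatively: from $\limsup_{n\to\infty}\hskp{\SSS(\bfL_{h_n}^{\textup{sym}})}{\bfL_{h_n}^{\textup{sym}}-(\bfD\bfu+\bfD\bfv^*)}\le 0$ and the weak convergence you get $\hskp{\SSS(\bfL_{h_n}^{\textup{sym}})-\SSS(\bfD\bfu+\bfD\bfv^*)}{\bfL_{h_n}^{\textup{sym}}-(\bfD\bfu+\bfD\bfv^*)}\to 0$, upgrade this modular information to strong $L^p(\Omega)$-convergence of $\bfL_{h_n}^{\textup{sym}}$ (immediate for $p\ge 2$; for $p<2$ via the shift--H\"older estimate $t^p\le c\,\phi_a(t)^{p/2}(\delta+a+t)^{(2-p)p/2}$ together with the uniform $L^p$-bound, which is exactly the nontrivial point you flag), and then pass to the limit strong-times-weak, the penalty being handled by nonnegativity plus the same cross-term argument. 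Your route buys, already at the level of the lemma and solely from \eqref{def:non-conform_pseudo.1}--\eqref{def:non-conform_pseudo.2}, the strong convergence of the symmetric DG gradients, which the paper only recovers later and with the help of the limit equation (Theorem~\ref{thm:convergence} for $p>2$; for $p\le 2$ Theorem~\ref{thm:convergence1} even settles for a.e.\ convergence plus \cite[Lemma 1.19]{GGZ}); the price is the modular-to-norm upgrade and the subsequence/sub-subsequence bookkeeping for the liminf (which is legitimate), both of which the paper's hemicontinuity trick avoids, and which also keeps the paper's argument uniform over the whole range $p\in(1,\infty)$ without case distinction.
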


\begin{proof}
  \textit{(\hyperlink{AN.1}{AN.1}).} Using, for fixed
  $n\hspace{-0.13em}\in\hspace{-0.13em} \setN$, the continuity of
  ${\Ghnk\hspace{-0.13em}\colon\hspace{-0.13em}\WDGn\hspace{-0.13em}\to\hspace{-0.13
  em}
    L^p(\Omega)}$ and
  $(\bfu_{h_n}\mapsto \jump{\bfu_{h_n}\otimes \bfn})\colon\WDGn\to
  L^p(\Gamma_{h_n})$ (cf.~\eqref{eq:eqiv0}), from the standard theory
  of Nemytski\u{\i} operators, we deduce 
  the well-definedness,~continuity, monotonicity,~and, thus,
  pseudo-monotonicity of
  $S_{h_n}^k\colon\WDGn\to
  (\WDGn)^*$. \\[-3mm]
		
	\textit{(\hyperlink{AN.2}{AN.2}).} For every $n\in \setN$ and $\bfu_{h_n}\in \smash{V_{h_n}^k(0)}$, following the ansatz \eqref{ansatz}, i.e., $\bfv_{h_n}\coloneqq \bfu_{h_n}+\PiDGn\!\bfv^*\in \Vhnk$, and 
    abbreviating $\smash{\bfL_{h_n}^{\textup{sym}}}\coloneqq \smash{\Dhnk\!\bfv_{h_n}}+\smash{\Rhnk\bfv^*}\in L^p(\Omega)$ in the shifts to shorten the notation, we have that
{	\begin{align}\label{lem:ldg_stress.1}
        \begin{aligned}
	   &\big\langle S_{h_n}^k\!\bfu_{h_n},\bfu_{h_n}\big\rangle_{\WDGn}
	   \\
	    &=\bighskp{\SSS(\Dhnk\!\bfu_{h_n}+[\bfH_{h_n}^k\!\bfv^*]^{\textup{sym}})}{\Dhnk\!\bfu_{h_n}}	  
	    \\
	    &\quad+\alpha \big\langle\SSS_{\sssn}(h_n^{-1}\jump{(\bfu_{h_n}+\bfh_{h_n}^k\!\bfv^*)\otimes \bfn}),\jump{\bfu_{h_n}\otimes \bfn}\big\rangle_{\Gamma_{h_n}}
        \\
        &=\bighskp{\SSS(\Dhnk\!\bfv_{h_n}+\Rhnk\bfv^*)}{\Dhnk\!(\bfv_{h_n}-\PiDGn\!\bfv^*)}
        \\
        &\quad+\alpha
	        \big\langle\SSS_{\sssn}(h_n^{-1}\jump{(\bfv_{h_n}-\bfv^*)\otimes \bfn}),\jump{(\bfv_{h_n}-\PiDGn\!\bfv^*)\otimes \bfn}\big\rangle_{\Gamma_{h_n}}\,.
         \end{aligned}
	\end{align}
Next, we can follow the argumentation\footnote{Note that in \cite[Lemma 4.1]{kr-phi-ldg} the case of a full gradient is treated. However, the same argumentation works also for symmetric gradients.} of the proof of \cite[Lemma 4.1]{kr-phi-ldg}, to deduce the existence of a constant $c>0$, depending only on the characteristics of $\SSS$ and the chunkiness $\omega_0>0$, and of a constant $c_{\alpha}>0$, additionally depending on $\max\{1,\alpha\}$,~such~that
    \begin{align}
          &\bighskp{\SSS(\Dhnk\!\bfv_{h_n}+\Rhnk\bfv^*)}{\Dhnk\!(\bfv_{h_n}-\PiDGn\!\bfv^*)} \notag
          \\
          &\quad+\alpha
          \big\langle\SSS_{\sssn}(h_n^{-1}\jump{(\bfv_{h_n}-\bfv^*)\otimes  \bfn})
          ,\jump{(\bfv_{h_n}-\PiDGn\!\bfv^*)\otimes \bfn}\big\rangle_{\Gamma_{h_n}}\label{lem:ldg_stress.3}
          \\
          &\ge c\, \min \{1,\alpha\} \Big (\int_\Omega \varphi(\vert
          \Dhnk(\bfv_{h_n}-\bfv^*)\vert )\, \textup{d}x
          +h_n\int_ {\Gamma_{h_n}}\hspace*{-2mm}\varphi(h_n^{-1}\vert \jump{(\bfv_{h_n}-\bfv^*)\otimes  \bfn} \vert
          )\, \textup{d} s \Big )\notag
          \\
          &\quad -c_\alpha \, \int_\Omega \varphi(\vert \nabla \bfv^*\vert )\, \textup{d}x \,.\notag
    \end{align}
    Using, again, the ansatz \eqref{ansatz}, the approximation properties of $\PiDGn$ (cf. \cite[(A.17)]{dkrt-ldg}), that $\varphi(t)\ge c_0t^p-c_1\delta^p$ for all $t\ge 0$, $h_n\mathscr{H}^{d-1}(\Gamma_{h_n})\leq c \vert \Omega\vert$, and that $\varphi(t)\leq c(t^p+\delta^p)$ for all $t\ge 0$, we find that
	\begin{align}
        &\min \{1,\alpha\} \Big (\int_\Omega \varphi(\vert
          \Dhnk(\bfv_{h_n}-\bfv^*)\vert )\, \textup{d}x
          +h_n\int_ {\Gamma_{h_n}}\hspace*{-2mm}\varphi(h_n^{-1}\vert \jump{(\bfv_{h_n}-\bfv^*)\otimes  \bfn} \vert
          )\, \textup{d} s \Big ) \notag 
          \\
          &\ge c\, \min \{1,\alpha\} \Big (\int_\Omega \varphi(\vert
          \Dhnk \bfu_{h_n})\vert )\, \textup{d}x
          +h_n\int_ {\Gamma_{h_n}}\hspace*{-2mm}\varphi(h_n^{-1}\vert \jump{\bfu_{h_n}\otimes  \bfn} \vert
          )\, \textup{d} s \Big ) \label{lem:ldg_stress.4}
          \\
          &\quad 
          -c\, \min \{1,\alpha\} \Big (\int_\Omega \varphi(\vert
          \Dhnk(\PiDGn\!\bfv^*-\bfv^*)\vert )\, \textup{d}x
          +h_n\int_ {\Gamma_{h_n}}\hspace*{-2mm}\varphi(\vert h_n^{-1}(\PiDGn\!\bfv^*-\bfv^*)\vert
          )\, \textup{d} s \Big ) \notag 
          \\
          &\ge c\, \min \{1,\alpha\} \Big (\norm{\bfu_{h_n}}_{\bfD,p,h_n}^p-c\,\delta^p\,\vert \Omega\vert -c\,\delta^p\,h_n\,\mathscr{H}^{d-1}(\Gamma_{h_n})-c \,
          \int_\Omega \varphi(\vert \nabla \bfv^*\vert )\, \textup{d}x\Big ) \notag 
          \\
          &\ge c\, \min \{1,\alpha\} \Big (\norm{\bfu_{h_n}}_{\bfD,p,h_n}^p-c\,\delta^p\,\vert \Omega\vert -c_\alpha \,
         \|\nabla \bfv^*\|_p^p \Big )\,. \notag 
        \end{align}
      }%
        Eventually, using in \eqref{lem:ldg_stress.1} the estimates \eqref{lem:ldg_stress.3},
        \eqref{lem:ldg_stress.4}, the norm equivalence
        \eqref{eq:equi2}, and the discrete Korn inequality
        \eqref{eq:equi1}, we conclude that
    \begin{align}\label{lem:ldg_stress.4.1}
        \smash{\big\langle S_{h_n}^k\!\bfu_{h_n},\bfu_{h_n}\big\rangle_{\smash{\WDGn}}
         \ge c\, \min \{1,\alpha\}\,\|\bfu_{h_n}\|_{\nabla,p,h_n}^p-c_\alpha\,\delta^p\,\vert \Omega\vert-c_\alpha\, \|\nabla \bfv^*\|_p^p\,.}
    \end{align}

    \textit{(\hyperlink{AN.3}{AN.3}).} For every $n\!\in\! \setN$, $\bfu_{h_n}\!\in\! \smash{V_{h_n}^k\!(0)}$ and $\bfw_{h_n}\!\in\! \WDGn$,~again,~abbreviating $\smash{\bfL_{h_n}^{\textup{sym}}\coloneqq \!\Dhnk\!\bfu_{h_n}+[\bfH_{h_n}^k\!\bfv^*]^{\textup{sym}}\!\in\! L^p(\Omega)}$ in the shifts to shorten the notation,~we~find~that
    \begin{align}\label{lem:ldg_stress.5}
       &\langle S_{h_n}^k\!\bfu_{h_n},\bfw_{h_n}\rangle_{\WDGn}
	       \\& \quad\leq \big\|\SSS(\Dhnk\!\bfu_{h_n}+[\bfH_{h_n}^k\!\bfv^*]^{\textup{sym}})\big\|_{p'}\big\|\Dhnk\bfw_{h_n}\big\|_p\notag
	        \\&\quad\quad+\alpha \,\smash{h_n^{\frac{1}{p'}}}\big\|\SSS_{\smash{\sssn}}(h_n^{-1}\jump{(\bfu_{h_n}+\bfh_{h_n}^k\!\bfv^*)\otimes \bfn})\big\|_{p',\Gamma_{h_n}}\!\smash{h_n^{\frac{1}{p}}}\big\|h_n^{-1}\jump{\bfw_{h_n}\otimes \bfn}\big\|_{p,\Gamma_{h_n}}\notag
	        \\&\quad=\vcentcolon I_1\,\big\|\Dhnk\bfw_{h_n}\big\|_p+I_2 \,\smash{h_n^{\frac{1}{p}}}\,\big\|h_n^{-1}\jump{\bfw_{h_n}\!\otimes\! \bfn}\big\|_{p,\Gamma_{h_n}}\,.\notag
    \end{align}
    Note that appealing to \cite[Corollary A.8, Corollary A.19]{kr-phi-ldg},~for~every~${n\in \setN}$,~it~holds
	\begin{align}\label{lem:ldg_stress.2}
          h_n^{\frac{1}{p}}\smash{\big\|h_n^{-1}\jump{\bfh_{h_n}^k\!\bfv^*\otimes \bfn}\big\|_{p,\Gamma_{h_n}}+\big\|\bfH_{h_n}^k\!\bfv^*\big\|_p\leq c\,\|\nabla\bfv^*\|_p\,.}
	\end{align}
    Using this and that $\vert\SSS(\bfA)\vert \leq
    c\,(\delta^{p-1}+\vert\bfA\vert^{p-1})$ for all ${\bfA\in
      \mathbb{R}^{d\times d}}$ (cf.~Assumption~\ref{assum:extra_stress},~\eqref{eq:shift} and Remark \ref{rem:phi_a}) 
        ~we~deduce~that
    \begin{align}
    \label{lem:ldg_stress.6}
    \begin{aligned}
        I_1^{p'}&
        \leq c\,\big\|\Dhnk\!\bfu_{h_n}+[\bfH_{h_n}^k\!\bfv^*]^{\textup{sym}}\big\|_p^p+c\,\delta^p\,\vert \Omega\vert
        \\&
        \leq c\,\big\|\Ghnk\bfu_{h_n}\big\|_p^p+c\,\delta^p\,\vert \Omega\vert+c\,\|\nabla \bfv^*\|_p^p\,.
    \end{aligned}
    \end{align}
    Similarly, using that
    $\vert\SSS_a(\bfA)\vert \leq
    c\,(\delta^{p-1}+a^{p-1}+\vert\bfA\vert^{p-1})$ for all
    $\bfA\in \mathbb{R}^{d\times d}$~and~${a\ge 0}$ (cf.~Assumption
    \ref{assum:extra_stress}, \eqref{eq:shift} and Remark \ref{rem:phi_a}),
    $h_n\mathscr{H}^{d-1}(\Gamma_{h_n})\leq c\,\vert\Omega\vert$,
    \eqref{lem:ldg_stress.2},~the~\mbox{discrete} trace inequality
    (cf.~\cite[Corollary~A.19]{kr-phi-ldg}) and the
    $L^p$-stability~of~$\Pian\!$~(cf.~\cite[Corollary~A.8]{kr-phi-ldg}), we obtain
    \begin{align}
    \label{lem:ldg_stress.7}
    \begin{aligned}
        I_2^{p'}&\leq c\,h_n\big\|h_n^{-1}\jump{(\bfu_{h_n}+\bfh_{h_n}^k\!\bfv^*)\otimes \bfn})\big\|_{p,\Gamma_{h_n}}^p
        \\&\quad
        +c\,\delta^p\,h_n\,\mathscr{H}^{d-1}(\Gamma_{h_n})+c\,h_n\big\|\sssn\big\|_{p,\Gamma_{h_n}}^p
        \\&\leq c\,h_n\big\|h_n^{-1}\jump{\bfu_{h_n}\otimes \bfn}\big\|_{p,\Gamma_{h_n}}^p+c\,h_n\big\|h_n^{-1}\jump{\bfh_{h_n}^k\!\bfv^*\otimes \bfn}\big\|_{p,\Gamma_{h_n}}^p
        \\&\quad
        +c\,\delta^p\,\vert \Omega\vert+c\,\|\bfL_{h_n}\|_p^p
        \\&\leq c\,h_n\big\|h_n^{-1}\jump{\bfu_{h_n}\otimes \bfn}\big\|_{p,\Gamma_{h_n}}^p+c\,\delta^p\,\vert \Omega\vert+c\,\big\|\Ghnk\bfu_{h_n}\big\|_p^p+c\,\|\nabla \bfv^*\|_p^p\,.
        \end{aligned}
    \end{align}
    Finally, using \eqref{lem:ldg_stress.6}, \eqref{lem:ldg_stress.7} and the norm equivalence \eqref{eq:eqiv0} in \eqref{lem:ldg_stress.5}, we~conclude~that
	\begin{align}\label{lem:ldg_stress.7.1}
	    \big\|S_{h_n}^k\!\bfu_{h_n}\big\|_{\smash{(\WDGn)^*}}^{p'}\leq c\,\|\bfu_{h_n}\|_{\nabla,p,h_n}^p+c\,\|\nabla \bfv^*\|_p^p+c\,\delta^p\,\vert \Omega\vert\,.
	\end{align}
	
	\textit{Non-conforming pseudo-monotonicity.} 
    Let $\bfu_{h_{m_n}}\!\in\!  \smash{V_{h_{m_n}}^k(0)}$, $n\in \setN$,~where~$(m_n)_{n\in \setN} $ $\subseteq \setN$ with $m_n\to \infty $ $(n\to\infty)$, 
	be a sequence such that
	\begin{gather}
	\sup_{n\in \setN}{\|\bfu_{h_{m_n}}\|_{\nabla,p,h_{m_n}}}<\infty\,,
	\qquad\bfu_{h_{m_n}}\weakto\bfu\quad\text{ in }L^p(\Omega)\quad(n\to\infty)\,,\label{eq:ldg.3}
	\\[-1mm]
	\limsup_{n\to\infty}{\big\langle S_{h_{m_n}}^k\!\!\bfu_{h_{m_n}}
		,\bfu_{h_{m_n}}-\bfu\big\rangle_{W^{1,p}(\mathcal{T}_{h_{m_n}})}}\leq 0\,.\label{eq:ldg.4}
	\end{gather}
	We assume that $m_n\hspace{-0.1em}=\hspace{-0.1em}n$ for every
        $n\hspace{-0.1em}\in\hspace{-0.1em} \setN$. Recall that
        (\hyperlink{NC.2}{NC.2}) and \eqref{eq:ldg.3} imply ${\bfu\hspace{-0.1em}\in\hspace{-0.1em} \Vo(0)}$. In addition, using \cite[Lemma A.37]{kr-phi-ldg} and \eqref{eq:ldg.3}, we find that
        \begin{align}
          \Ghnk\bfu_{h_n}\rightharpoonup\nabla\bfu\quad\text{ in }L^p(\Omega)\quad(n\to \infty)\,.\label{eq:Dconv}
        \end{align}
        We abbreviate
        $\smash{\bfL_{h_n}^{\textup{sym}}\hspace{-0.1em}\coloneqq \hspace{-0.1em}
          \Dhnk\!\bfu_{h_n}\hspace{-0.1em}+\hspace{-0.1em}[\bfH_{h_n}^k\!\bfv^*]^{\textup{sym}}}$
        for all $n\in \setN$. For~$\bfz\hspace{-0.1em}\in\hspace{-0.1em} \Vo(0)$~and~${s\hspace{-0.1em}\in\hspace{-0.1em} \left(0,1\right)}$, we set
        $\bfz_s\coloneqq (1-s)\bfu+s\bfz\in \Vo(0)$.  Using the
        monotonicity~of~$\SSS$ (cf.~Assumption~\ref{assum:extra_stress}) and $\Dhnk\hspace{-0.1em}\bfz\hspace{-0.1em}=\hspace{-0.1em}\bfD\bfz$,
        $\Dhnk\hspace{-0.1em}\bfu\hspace{-0.1em}=\hspace{-0.1em}\bfD\bfu$~in~$L^p(\Omega)$ in the volume terms, that
        ${\jump{\bfu\hspace{-0.12em}\otimes\hspace{-0.12em} \bfn}\hspace{-0.12em}=\hspace{-0.12em}\jump{\bfz\hspace{-0.12em}\otimes\hspace{-0.12em} \bfn}\hspace{-0.12em}=\hspace{-0.12em}\bfzero}$~on~$\Gamma_h$,  adding and subtracting $\bfh_{h_n}^k\bfv^*$, and,
        again, the monotonicity~of~$\SSS$ for the~terms~on~$\Gamma_h$,
        we deduce that
		\begin{align}
		&(1-s)\,\big\langle S_{h_n}^k\bfu_{h_n},\bfu_{h_n}-\bfu\big\rangle_{\smash{\WDGn}}+s\,\big\langle S_{h_n}^k\!\bfu_{h_n},\bfu_{h_n}-\bfz\big\rangle_{\smash{\WDGn}}\notag
		\\&\quad\ge (1-s)\,\big(\SSS (\bfD\bfz_s+[\bfH_{h_n}^k\!\bfv^*]^{\textup{sym}}),
		\Dhnk\!\bfu_{h_n}-	\bfD\bfu\big)\notag\\&\quad\quad
		+s\,\big(\SSS (\bfD\bfz_s+[\bfH_{h_n}^k\!\bfv^*]^{\textup{sym}}),
		\Dhnk\!\bfu_{h_n}-	\bfD\bfz\big)\notag
		\\&\quad\quad+\alpha\,\big\langle \SSS_{\smash{\sssn}}(h^{-1}\jump{(\bfu_{h_n}\!+\bfh_{h_n}^k\!\bfv^*)\otimes \bfn}),
		\jump{(\bfu_{h_n}\!\pm\bfh_{h_n}^k\!\bfv^*)\otimes \bfn}\big\rangle_{\smash{\Gamma_{h_n}}}\label{eq:ldg.5}
		\\&\quad\ge (1-s)\,\big(\SSS (\bfD\bfz_s+[\bfH_{h_n}^k\!\bfv^*]^{\textup{sym}}),
		\Dhnk\!\bfu_{h_n}-	\bfD\bfu\big)\notag\\&\quad\quad
		+s\,\big(\SSS (\bfD\bfz_s+[\bfH_{h_n}^k\!\bfv^*]^{\textup{sym}}),
		\Dhnk\!\bfu_{h_n}-	\bfD\bfz\big)\notag
		\\&\quad\quad-\alpha\,\big\langle\SSS_{\smash{\sssn}}(h^{-1}\jump{(\bfu_{h_n}+\bfh_{h_n}^k\!\bfv^*)\otimes \bfn}),
		\jump{\bfh_{h_n}^k\!\bfv^*\otimes \bfn}\big\rangle_{\smash{\Gamma_{h_n}}}\,.\notag
		\end{align}
		Then, proceeding as in \eqref{lem:ldg_stress.7}, we find that
		\begin{align*}
		    \sup_{n\in \setN}{h_n^{\frac{1}{p'}}\smash{\big\|\SSS_{\smash{\sssn}}(h^{-1}\jump{(\bfu_{h_n}+\bfh_{h_n}^k\!\bfv^*)\otimes \bfn})\big\|_{p',\Gamma_{h_n}}<\infty}\,.}
		\end{align*}
		Thus, using that $\smash{h_n^{\frac{1}{p}}}\|h_n^{-1}\bfh_{h_n}^k\!\bfv^*\|_{p,\Gamma_{h_n}}\to 0$ $(n\to \infty)$ (cf.~\cite[Lemma A.27]{kr-phi-ldg}),~we~obtain
		\begin{align}
		    \big\langle\SSS_{\smash{\sssn}}(h_n^{-1}\jump{(\bfu_{h_n}+\bfh_{h_n}^k\!\bfv^*)\otimes \bfn}),
		\jump{\bfh_{h_n}^k\!\bfv^*\otimes \bfn}\big\rangle_{\smash{\Gamma_{h_n}}}\to 0\quad(n\to \infty)\,.\label{eq:ldg.6}
		\end{align}
	    Finally, 
		dividing by $s>0$ and taking the limes inferior in \eqref{eq:ldg.5} 
		with respect~to~${n\to \infty}$, using \eqref{eq:ldg.4}--\eqref{eq:ldg.6} and $[\bfH_{h_n}^k\!\bfv^*]^{\textup{sym}}\!\to \bfD\bfv^*$ in $L^p(\Omega)$ $(n\to \infty)$ (cf.~\cite[Lemma~A.27]{kr-phi-ldg}) we conclude, for every $\bfz\in \Vo(0)$,  that
		\begin{align*}
		\liminf_{n\to \infty}{\big\langle S_{h_n}^k\!\bfu_{h_n},
			\bfu_{h_n}-\bfz\big\rangle_{\smash{\WDGn}}}\ge\langle S\bfz_s,
		\bfu-\bfz\rangle_{\Vo(0)}\to\langle S\bfu,
		\bfu-\bfz\rangle_{\Vo(0)}\quad(s\to 0)\,.
		\end{align*}
\end{proof}

\begin{lemma}\label{lem:ldg_conv}
  Let $(h_n)_{n\in \setN}\subseteq \mathbb{R}_{>0}$ be such that
  $h_n\!\to\! 0$ $(n\!\to\!
  \infty)$~and~let~${p\in (\frac{3d}{d+2},\infty)}$.
  Then,~the~sequence
  $\smash{B_{h_n}^k\colon\! \WDGn\!\to\!  (\WDGn)^*}$,
  $n\!\in\! \setN$, defined in \eqref{eq:bB}, satisfies
  (\hyperlink{AN.1}{AN.1}) and (\hyperlink{AN.3}{AN.3}), and is
  non-conforming pseudo-monotone with respect to
  $(V_{h_n}^k\!(0))_{n\in \setN }$ and
  ${B
    \hspace{-0.22em}\colon\hspace{-0.22em}\Vo(0)\hspace{-0.22em}\to\hspace{-0.22em}\Vo(0)^*\hspace{-0.2em}}$
  defined via
  $\skp{B\bfu}{\bfz}_{\smash{\Vo(0)}}\coloneqq \hskp{[\nabla(\bfu+\bfv^*)](\bfu+\bfv^*)}{\bfz}$
  for every $\bfu,\bfz\in \Vo(0)$.
\end{lemma}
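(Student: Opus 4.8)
The plan is to verify (\hyperlink{AN.1}{AN.1}), (\hyperlink{AN.3}{AN.3}) and non-conforming pseudo-monotonicity separately, the last being the essential point. The hypothesis $p>\tfrac{3d}{d+2}$ will enter only through the equivalent inequality $2p'<\tfrac{dp}{d-p}$ (with $\tfrac{dp}{d-p}\coloneqq\infty$ for $p\ge d$), which supplies the compact Sobolev embedding $W^{1,p}(K)\hookrightarrow\hookrightarrow L^{2p'}(K)$ on each simplex $K$ and, crucially, a uniform-in-$h$ discrete Sobolev--Poincar\'e inequality $\|\bfw_h\|_{2p'}\le c\,\|\bfw_h\|_{\nabla,p,h}$ on $\WDG$. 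I will also use throughout the $L^q$-stability and $L^q$-approximation of $\PiDGn$ (cf.~\cite[Corollary~A.8, Corollary~A.19]{kr-phi-ldg}), the norm equivalence \eqref{eq:eqiv0}, the bound $\|\bfH_{h_n}^k\bfv^*\|_p\le c\,\|\nabla\bfv^*\|_p$, and the convergences $\PiDGn\bfv^*\to\bfv^*$, $\bfH_{h_n}^k\bfv^*\to\nabla\bfv^*$ in $L^p(\Omega)$ together with $h_n^{1/p}\|h_n^{-1}\bfh_{h_n}^k\bfv^*\|_{p,\Gamma_{h_n}}\to0$ (cf.~\cite[Lemma~A.27]{kr-phi-ldg}).

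For (\hyperlink{AN.3}{AN.3}), writing $\bfv_{h_n}\coloneqq\bfu_{h_n}+\PiDGn\bfv^*$ and applying H\"older's inequality with exponents $(2p',2p',p)$ to the two trilinear forms in \eqref{eq:bB}, one bounds $|\langle B_{h_n}^k\bfu_{h_n},\bfz_{h_n}\rangle_{\WDGn}|$ by a constant times $\|\bfv_{h_n}\|_{2p'}\big(\|\bfv_{h_n}\|_{2p'}+\|\PiDGn\bfz_{h_n}\|_{2p'}\big)\big(\|\Dhnk\bfz_{h_n}\|_p+\|\Ghnk\bfu_{h_n}\|_p+\|\bfH_{h_n}^k\bfv^*\|_p+\|g\|_p\big)$; the uniform discrete Sobolev--Poincar\'e inequality, the stability of $\PiDGn$, \eqref{eq:eqiv0} and $W^{1,p}(\Omega)\hookrightarrow L^{2p'}(\Omega)$ then give $\|B_{h_n}^k\bfu_{h_n}\|_{(\WDGn)^*}\le\mathscr B(\|\bfu_{h_n}\|_{\nabla,p,h_n})$ with $\mathscr B(r)\coloneqq c\,(r+\|\bfv^*\|_{1,p}+\|g\|_p)^2$, non-decreasing and independent of $n$. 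For (\hyperlink{AN.1}{AN.1}), I fix $n$, where all projections have finite-dimensional range and element-wise inverse estimates are available, and show that $B_{h_n}^k\colon\WDGn\to(\WDGn)^*$ is completely continuous: if $\bfw_j\weakto\bfw$ in $\WDGn$ then $\bfw_j\to\bfw$ in $L^{2p'}(\Omega)$ (compact embedding), $\Ghnk\bfw_j\weakto\Ghnk\bfw$ in $L^p(\Omega)$, and $\{\PiDGn\bfz_h:\|\bfz_h\|_{\nabla,p,h_n}\le1\}$ is relatively compact in every $L^q(\Omega)$; splitting each term of \eqref{eq:bB} into a weak-times-strong and a bounded-times-strong piece yields $B_{h_n}^k\bfw_j\to B_{h_n}^k\bfw$ in $(\WDGn)^*$. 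A bounded, completely continuous operator being pseudo-monotone, this proves (\hyperlink{AN.1}{AN.1}).

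For the non-conforming pseudo-monotonicity, let $\bfu_{h_n}\in V_{h_n}^k(0)$ (we may take $m_n=n$) satisfy \eqref{def:non-conform_pseudo.1}, so $\bfu\in\Vo(0)$ by (\hyperlink{NC.2}{NC.2}) and $\Ghnk\bfu_{h_n}\weakto\nabla\bfu$ in $L^p(\Omega)$ by \cite[Lemma~A.37]{kr-phi-ldg}. The decisive step is to upgrade the weak $L^p$-convergence to \emph{strong} $L^{2p'}$-convergence: since $2p'<\tfrac{dp}{d-p}$, a sequence bounded in $\|\cdot\|_{\nabla,p,h_n}$ that converges weakly in $L^p(\Omega)$ converges strongly in $L^{2p'}(\Omega)$ (discrete Rellich--Kondrachov, along a subsequence, hence by uniqueness of the weak limit along the whole sequence). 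Consequently $\bfv_{h_n}=\bfu_{h_n}+\PiDGn\bfv^*\to\bfu+\bfv^*$ in $L^{2p'}(\Omega)$ and, for fixed $\bfz\in\Vo(0)$, $\PiDGn(\bfu_{h_n}-\bfz)=\bfu_{h_n}-\PiDGn\bfz\to\bfu-\bfz$ in $L^{2p'}(\Omega)$, where $\PiDGn\bfz\to\bfz$ in $L^{2p'}(\Omega)$ because $\bfz\in W^{1,p}_0(\Omega)\hookrightarrow L^{2p'}(\Omega)$. Testing $B_{h_n}^k\bfu_{h_n}$ with $\bfu_{h_n}-\bfz$ and noting that $\bfz$ has vanishing jumps, so that $\Dhnk(\bfu_{h_n}-\bfz)=\Dhnk\bfu_{h_n}-\bfD\bfz\weakto\bfD(\bfu-\bfz)$ in $L^p(\Omega)$ while $\Ghnk\bfu_{h_n}+\bfH_{h_n}^k\bfv^*-g\mathbf{I}_d\weakto\nabla(\bfu+\bfv^*)-g\mathbf{I}_d$ in $L^p(\Omega)$, each trilinear term in \eqref{eq:bB} is the pairing of a strongly convergent $L^{p'}$-factor with a weakly convergent $L^p$-factor, so that
\[
\lim_{n\to\infty}\langle B_{h_n}^k\bfu_{h_n},\bfu_{h_n}-\bfz\rangle_{\WDGn}
=-\tfrac12\big((\bfu+\bfv^*)\otimes(\bfu+\bfv^*),\bfD(\bfu-\bfz)\big)
+\tfrac12\big((\bfu-\bfz)\otimes(\bfu+\bfv^*),\nabla(\bfu+\bfv^*)-g\,\mathbf{I}_d\big).
\]
Finally, $\divo\bfu=0$ gives $\divo(\bfu+\bfv^*)=g$, and the elementary identity $[\nabla\bfw]\bfw=\tfrac12\divo(\bfw\otimes\bfw)+\tfrac12(\nabla\bfw-\divo\bfw\,\mathbf{I}_d)\bfw$ with $\bfw=\bfu+\bfv^*$, integrated by parts against $\bfu-\bfz\in\Vo(0)$ (using the symmetry of $\bfw\otimes\bfw$ to turn $\nabla(\bfu-\bfz)$ into $\bfD(\bfu-\bfz)$), identifies the right-hand side with $([\nabla(\bfu+\bfv^*)](\bfu+\bfv^*),\bfu-\bfz)=\langle B\bfu,\bfu-\bfz\rangle_{\Vo(0)}$. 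Thus $\langle B_{h_n}^k\bfu_{h_n},\bfu_{h_n}-\bfz\rangle_{\WDGn}\to\langle B\bfu,\bfu-\bfz\rangle_{\Vo(0)}$ for every $\bfz\in\Vo(0)$, which is stronger than the inequality required in Definition~\ref{def:non-conform_pseudo} (in particular, hypothesis \eqref{def:non-conform_pseudo.2} is not needed for $B$).

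The main obstacle is precisely the discrete compactness invoked above --- passing from a uniform $\|\cdot\|_{\nabla,p,h_n}$-bound plus weak $L^p$-convergence to strong $L^{2p'}$-convergence --- which is where the strict inequality $p>\tfrac{3d}{d+2}$ is indispensable: at the borderline $2p'=\tfrac{dp}{d-p}$ only continuity (not complete continuity) of $B_{h_n}^k$ survives. The remaining ingredients are routine: H\"older bookkeeping, the norm equivalences \eqref{eq:eqiv0}, and the already-established stability and approximation properties of $\PiDGn$, $\bfH_{h_n}^k$ and $\bfh_{h_n}^k$, whose jump contributions on $\Gamma_{h_n}$ vanish as $h_n\to0$.
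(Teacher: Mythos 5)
Your proposal is correct and follows essentially the same route as the paper: (AN.3) via Hölder with exponents $(2p',2p',p)$, the discrete Sobolev embedding (using $2p'\le p^*$ for $p>\tfrac{3d}{d+2}$) and the stability of $\PiDGn$; (AN.1) via strong/complete continuity of $B_{h_n}^k$ for fixed $n$; and the non-conforming pseudo-monotonicity via the discrete Rellich--Kondrachov theorem upgrading weak $L^p$-convergence to strong $L^{2p'}$-convergence, pairing strong $L^{p'}$-factors with weakly convergent $L^p$-factors, and an integration by parts to identify the limit with $\langle B\bfu,\bfu-\bfz\rangle_{\Vo(0)}$. Your observation that the full limit is obtained (so \eqref{def:non-conform_pseudo.2} is not needed for $B$) is implicit in the paper's proof as well.
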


\begin{proof}
    \textit{(\hyperlink{AN.1}{AN.1}).}
    Using for fixed $n\hspace{-0.1em}\in\hspace{-0.1em} \setN$, the continuity of ${\Ghnk\hspace{-0.1em}\colon\hspace{-0.1em}\WDGn\hspace{-0.1em}\to\hspace{-0.1em} L^p(\Omega)}$ and
    $\Uppi_{h_n}^k\colon \WDGn\to \Vhk$, the compact embedding $\WDGn\hookrightarrow\hookrightarrow L^q(\Omega)$~for~${q<p^*}$, where $p^*\coloneqq \smash{\frac{dp}{d-p}}$ if $p<d$ and $p^*\coloneqq \infty$ else, which results from \cite[Lemma A.37]{kr-phi-ldg} and an element-wise application of the Rellich--Kondrachov theorem,
    from  the standard theory of Nemytski\u{\i} operators, we deduce that for every $n\in \setN$, the well-definedness,~strong~continuity, and, thus, pseudo-monotonicity
	of  $B_{h_n}^k\colon\WDGn\to (\WDGn)^*$,~${ n\in \setN }$.

	\textit{(\hyperlink{AN.3}{AN.3}).} For every $n\in \setN$, $\bfu_{h_n}\in V_{h_n}^k(0)$ and $\bfw_{h_n}\in \WDGn$, using Hölder's inequality,
	the norm equivalence \eqref{eq:eqiv0}, the discrete Sobolev
        embedding theorem (cf.~\cite[Theorem 5.2]{ern-book}) and
        $\smash{\|\Uppi_{h_n}^k\!\bfw_{h_n}\|_{\nabla,q,h_n}\!\leq\!
          c\,\|\bfw_{h_n}\|_{\nabla,q,h_n}}$, $q\! \in\! (1,\infty)$ (cf.~\hspace{-0.1em}~\cite[(A.18)]{dkrt-ldg},~we~get
    \begin{align}
       \big\langle B_{h_n}^k\!\bfu_{h_n},\bfw_{h_n}\big\rangle_{\smash{\WDGn}}&
       \leq \big\|\bfu_{h_n}+\Uppi_{h_n}^k\!\bfv^*\big\|_{2p'}^2\big\|\Dhnk\!\bfw_{h_n}\big\|_p\label{lem:ldg_conv.1}
       \\[-0.5mm]&\quad+\big\|\Uppi_{h_n}^k\!\bfw_{h_n}\big\|_{2p'}\big\|\bfu_{h_n}\!+\Uppi_{h_n}^k\!\bfv^*\big\|_{2p'}\big\| \Ghnk\bfu_{h_n}\!+\bfH_{h_n}^k\!\bfv^*\!-g\mathbf{I}_d\big\|_p\notag
       \\[-0.5mm]&\leq c\,\big(\|\bfu_{h_n}\|_{\nabla,p,\Gamma_{h_n}}+\|\bfv^*\|_{1,p}+\|g\|_p\big)^2\|\bfw_{h_n}\|_{\nabla,p,\Gamma_{h_n}}\,,\notag
    \end{align}
    where we also used that $2p' \le p^*$ for $\smash{p\in (\frac{3d}{d+2},\infty)}$. From \eqref{lem:ldg_conv.1}, in~turn,~we~obtain
    \begin{align}\label{lem:ldg_conv.2}
        \smash{\|B_{h_n}^k\!\bfu_{h_n}\|_{\smash{(\WDGn)^*}}\leq c\,\big(\|\bfu_{h_n}\|_{\nabla,p,\Gamma_{h_n}}+\|\bfv^*\|_{1,p}+\|g\|_p\big)^2\,.}
    \end{align}

    \textit{Non-conforming pseudo-monotonicity.}
    Let $\bfu_{h_{m_n}}\!\in \!\smash{V_{h_{m_n}}^k}(0)$, $n\in \setN$,~where~$(m_n)_{n\in \setN}$ $\subseteq \setN$ with $m_n\to \infty $ $(n\to\infty)$, 
	be a sequence such that
	\begin{gather}
	\sup_{n\in \setN}{\|\bfu_{h_{m_n}}\|_{\nabla,p,h_{m_n}}}<\infty\,,\qquad
	\bfu_{h_{m_n}}\weakto\bfu\quad\text{ in }L^p(\Omega)\quad(n\to\infty)\,,\label{eq:conv.3}
	\\[-1mm]
	\limsup_{n\to\infty}{\big\langle B_{h_{m_n}}^k\!\!\bfu_{h_{m_n}}
		,\bfu_{h_{m_n}}-\bfu\big\rangle_{\smash{W^{1,p}(\mathcal{T}_{h_{m_n}})}}}\leq 0\,.\label{eq:conv.4}
	\end{gather}
	We assume that $m_n\hspace{-0.1em}=\hspace{-0.1em}n$ for every
        $n\hspace{-0.1em}\in\hspace{-0.1em} \setN$. Recall that
        (\hyperlink{NC.2}{NC.2}) and \eqref{eq:conv.3} 
		 imply~${\bfu\hspace{-0.1em}\in\hspace{-0.1em} \Vo(0)}$. Resorting to \cite[Lemma A.37]{kr-phi-ldg} and the discrete Rellich--Kondrachov theorem (cf.~\cite[Theorem 5.6]{ern-book}) in \eqref{eq:conv.3},  we find that
		\begin{align}
		\begin{aligned}\label{eq:conv.5}
			\Ghnk\bfu_{h_n}&\rightharpoonup\nabla\bfu&&\quad\text{ in }L^p(\Omega)\quad(n\to \infty)\,,\\
			\bfu_{h_n}&\to\bfu&&\quad\text{ in }L^q(\Omega)\quad(n\to \infty)\quad q<p^*\,,
			\end{aligned}
		\end{align}
		Since  $\smash{\Uppi_{h_n}^k}\!\bfv^*\to \bfv^*$ in
                $L^q(\Omega)$ $(n\to \infty)$ and
                $\smash{\Uppi_{h_n}^k}\!\bfz\to \bfz$ in $L^q(\Omega)$ $(n\to
                \infty)$~for~all~${q<p^*}$
                (cf.~\cite[Corollary~A.8]{kr-phi-ldg} and
                \cite[Theorem 5.6]{ern-book}), 
                $\smash{\bfH_{h_n}^k}\!\bfv^*\to \nabla
                \bfv^*$~in~$L^p(\Omega)$ ${(n\to \infty)}$
                (cf.~\cite[Lemma A.27]{kr-phi-ldg}) and
                \eqref{eq:conv.5}, for every $\bfz\in \Vo(0)$,
                using~that ${\smash{\Dhnk}\bfz=\bfD\bfz}$~in~$L^p(\Omega)$, we conclude that
		\begin{align}
		    \lim_{n\to \infty}\,
    	    \big\langle B_{h_n}^k\!\bfu_{h_n},\bfu_{h_n}-\bfz\big\rangle_{\smash{\WDGn}}
		    & =-\tfrac{1}{2}\bighskp{(\bfu+\bfv^*)\otimes (\bfu+\bfv^*)}{     \bfD\bfu-\bfD\bfz}\label{eq:conv.6}\\&\quad+\tfrac{1}{2}\bighskp{(\bfu-\bfz)\otimes (\bfu+\bfv^*)}{     \nabla\bfu+\nabla\bfv^*-g\mathbf{I}_d} \notag
		    \\&= \langle B\bfu,\bfu-\bfz\rangle_{\Vo(0)}\,,\notag
		\end{align}
		where we used integration-by-parts for the last equality sign. 
\end{proof}

\begin{lemma}\label{lem:ldg}
    Let $(h_n)_{n\in \setN}\subseteq \mathbb{R}_{>0}$ be such that $h_n\to 0$ $(n\to \infty)$~and~let~${p\in (2,\infty)}$. Then,~the~sequence  $A_{h_n}^k\colon\! W^{1,p}(\mathcal{T}_{h_n}\hspace{-0.1em})\!\to\! (W^{1,p}(\mathcal{T}_{h_n}\hspace{-0.1em}))^*$, $n\!\in\! \setN$, 
    satisfies~(\hyperlink{AN.1}{AN.1})--(\hyperlink{AN.3}{AN.3})~and is~non-conforming pseudo-monotone with respect to $\smash{(V_{h_n}^k(0))_{n\in \setN }}$~and~$\smash{A\colon\!\Vo(0)\!\to\!\Vo(0)^*}$,  defined, for every $\bfu\in \Vo(0)$, via $A\bfu\coloneqq S\bfu+B\bfu$ in $\Vo(0)^*$.
\end{lemma}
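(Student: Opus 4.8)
The plan is to decompose $A_{h_n}^k=S_{h_n}^k+B_{h_n}^k$ and to reduce every assertion to Lemma~\ref{lem:ldg_stress}, Lemma~\ref{lem:ldg_conv}, and Lemma~\ref{def:non-conform_sum}. Since $p>2$ implies $p>\tfrac{3d}{d+2}$ for $d\in\{2,3\}$, Lemma~\ref{lem:ldg_conv} is applicable, so that $B_{h_n}^k$, $n\in\setN$, is non-conforming pseudo-monotone with respect to $(V_{h_n}^k(0))_{n\in\setN}$ and $B$, while $S_{h_n}^k$, $n\in\setN$, is non-conforming pseudo-monotone with respect to $(V_{h_n}^k(0))_{n\in\setN}$ and $S$ by Lemma~\ref{lem:ldg_stress}; as $A=S+B$ by definition, Lemma~\ref{def:non-conform_sum} immediately yields that $A_{h_n}^k$, $n\in\setN$, is non-conforming pseudo-monotone with respect to $(V_{h_n}^k(0))_{n\in\setN}$ and $A$. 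Condition (\hyperlink{AN.1}{AN.1}) holds because, for fixed $n\in\setN$, $S_{h_n}^k$ is pseudo-monotone and bounded and $B_{h_n}^k$ is strongly continuous and bounded (both by the cited lemmas), so that $A_{h_n}^k$, being the sum of a pseudo-monotone operator and a strongly continuous one, is pseudo-monotone. Condition (\hyperlink{AN.3}{AN.3}) follows by adding the bounds \eqref{lem:ldg_stress.7.1} and \eqref{lem:ldg_conv.2}, the sum of two non-decreasing majorants being again non-decreasing. The only genuine point is therefore the coercivity (\hyperlink{AN.2}{AN.2}): $S_{h_n}^k$ alone is coercive by \eqref{lem:ldg_stress.4.1}, whereas $B_{h_n}^k$ is merely bounded, so one must show that the convective part cannot offset this coercivity, and this is exactly where $p>2$ enters.

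To prove (\hyperlink{AN.2}{AN.2}), fix $n\in\setN$ and $\bfu_{h_n}\in V_{h_n}^k(0)$ and set $\bfv_{h_n}\coloneqq\bfu_{h_n}+\PiDGn\bfv^*$; in view of \eqref{lem:ldg_stress.4.1} it suffices to bound $\langle B_{h_n}^k\bfu_{h_n},\bfu_{h_n}\rangle_{\WDGn}$ from below. Testing \eqref{eq:bB} with $\bfz_h=\bfu_{h_n}$, using $\PiDGn\bfu_{h_n}=\bfu_{h_n}$ (as $\bfu_{h_n}\in\Vhnk$) and the identity $\Ghnk\bfu_{h_n}+\bfH_{h_n}^k\bfv^*=\Ghnk\bfv_{h_n}+\boldsymbol{\mathcal{R}}_{h_n}^k\bfv^*$ (which follows from the definitions of $\bfH_{h_n}^k\bfv^*$ and of $\Ghnk$ on $W^{1,p}(\Omega)$), the cubic-in-$\bfu_{h_n}$ contribution to $\langle B_{h_n}^k\bfu_{h_n},\bfu_{h_n}\rangle_{\WDGn}$ equals $-\tfrac12(\bfu_{h_n}\otimes\bfu_{h_n},\Dhnk\bfu_{h_n})+\tfrac12(\bfu_{h_n}\otimes\bfu_{h_n},\Ghnk\bfu_{h_n})$, which vanishes since $\bfu_{h_n}\otimes\bfu_{h_n}$ is symmetric and $\Dhnk=[\Ghnk]^{\textup{sym}}$ — this is precisely the discrete skew-symmetry built into the modified (Temam-type) discretization of the convective term. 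What remains is a finite sum of terms that are at most quadratic in $\bfu_{h_n}$, each carrying a factor $\PiDGn\bfv^*$, $\Ghnk\PiDGn\bfv^*$, $\boldsymbol{\mathcal{R}}_{h_n}^k\bfv^*$, or $g\mathbf{I}_d$.

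Each of these remaining terms is controlled by Hölder's inequality, the discrete Sobolev embedding $\WDGn\hookrightarrow L^{2p'}(\Omega)$ (valid because $2p'\le p^*$ for $p>\tfrac{3d}{d+2}$, cf.~the proof of Lemma~\ref{lem:ldg_conv}), the $L^p$- and $W^{1,p}$-stability of $\PiDGn$ and the $L^p$-boundedness of $\Ghnk$ together with \eqref{lem:ldg_stress.2}, and the norm equivalence \eqref{eq:eqiv0}; this yields $\langle B_{h_n}^k\bfu_{h_n},\bfu_{h_n}\rangle_{\WDGn}\ge-c\,(1+\|\bfv^*\|_{1,p}+\|g\|_p)\,(\|\bfu_{h_n}\|_{\nabla,p,h_n}+1)^2$. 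Since $p>2$, Young's inequality with exponents $\tfrac p2$ and $\tfrac{p}{p-2}$ then absorbs the quadratic part: for every $\varepsilon>0$ there is a constant $C_\varepsilon>0$ with $\langle B_{h_n}^k\bfu_{h_n},\bfu_{h_n}\rangle_{\WDGn}\ge-\varepsilon\,\|\bfu_{h_n}\|_{\nabla,p,h_n}^p-C_\varepsilon\,(1+\|\bfv^*\|_{1,p}+\|g\|_p)^{\frac{p}{p-2}}$. Adding this to \eqref{lem:ldg_stress.4.1} and choosing $\varepsilon\coloneqq\tfrac12\,c\,\min\{1,\alpha\}$, with $c>0$ the constant from \eqref{lem:ldg_stress.4.1}, we obtain for all $\bfu_{h_n}\in V_{h_n}^k(0)$ and $n\in\setN$ that $\langle A_{h_n}^k\bfu_{h_n},\bfu_{h_n}\rangle_{\WDGn}\ge\tfrac12\,c\,\min\{1,\alpha\}\,\|\bfu_{h_n}\|_{\nabla,p,h_n}^p-c_\alpha$, i.e., condition (\hyperlink{AN.2}{AN.2}) holds with the weakly coercive mapping $\mathscr{C}(a)\coloneqq\tfrac12\,c\,\min\{1,\alpha\}\,a^{p-1}$. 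I expect this coercivity estimate for the convective term to be the main obstacle; the use of Young's inequality with the positive exponent $p-2$ in the absorbing step is the structural reason for the restriction $p>2$, while the weaker condition $p>\tfrac{3d}{d+2}$ (required only for the discrete Sobolev embedding and for Lemma~\ref{lem:ldg_conv}) is then automatically satisfied for $d\in\{2,3\}$.
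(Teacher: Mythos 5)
Your proposal follows essentially the same route as the paper: the decomposition $A_{h_n}^k=S_{h_n}^k+B_{h_n}^k$ together with Lemma~\ref{def:non-conform_sum}, Lemma~\ref{lem:ldg_stress} and Lemma~\ref{lem:ldg_conv} settles (\hyperlink{AN.1}{AN.1}), (\hyperlink{AN.3}{AN.3}) and the non-conforming pseudo-monotonicity, and your proof of (\hyperlink{AN.2}{AN.2}) is exactly the paper's argument, namely the cancellation of the cubic convective contribution (the Temam-type skew-symmetry), estimation of the remaining quadratic and linear terms via H\"older, the discrete Sobolev embedding $2p'\le p^*$, the stability of $\PiDGn$ and \eqref{lem:ldg_stress.2}, and absorption through Young's inequality with exponents $\frac{p}{2}$, $\frac{p}{p-2}$ (and $p$, $p'$), combined with \eqref{lem:ldg_stress.4.1}. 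One cosmetic inaccuracy: your intermediate lower bound $-c\,(1+\|\bfv^*\|_{1,p}+\|g\|_p)(\|\bfu_{h_n}\|_{\nabla,p,h_n}+1)^2$ does not dominate the term $(\|\bfv^*\|_{1,p}+\|g\|_p)^2\|\bfu_{h_n}\|_{\nabla,p,h_n}$ (the paper keeps both contributions, leading to the constants $(\cdot)^{2p'}$ and $(\cdot)^{\frac{p}{p-2}}$), but since $\bfv^*$ and $g$ are fixed data entering only the additive constant in (\hyperlink{AN.2}{AN.2}), this does not affect the validity of the argument.
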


\begin{proof}
    Note that $2\ge
        \frac{3d}{d+3}$ for $d\in \set{2,3}$. Thus, since (\hyperlink{AN.1}{AN.1}), (\hyperlink{AN.3}{AN.3})~and~\mbox{non-con}-forming pseudo-monotinicity  are stable under summation (cf.~Lemma~\ref{def:non-conform_sum}), appealing to Lemma~\ref{lem:ldg_stress} and Lemma~\ref{lem:ldg_conv}, we only have to show that  ${A_{h_n}^k\colon\hspace{-0.05em} \WDGn\hspace{-0.05em}\to\hspace{-0.05em}  (\WDGn)^*\!}$, ${n\in \setN}$, satisfies~(\hyperlink{AN.2}{AN.2}). For every $n\in \setN$ and $\bfu_{h_n}\in \smash{V_{h_n}^k(0)}$, it holds
    \begin{align*}
	\begin{aligned}
	    \langle B_{h_n}^k\!\bfu_{h_n},\bfu_{h_n}\rangle_{\WDGn}
	    &=\tfrac{1}{2}\bighskp{\bfu_{h_n}\otimes (\bfu_{h_n}+\Uppi_{h_n}^k\!\bfv^*)}{\bfH_{h_n}^k\!\bfv^*-g\mathbf{I}_d}
	    \\&\quad-\tfrac{1}{2}\bighskp{\Uppi_{h_n}^k\!\bfv^*\otimes (\bfu_{h_n}+\Uppi_{h_n}^k\!\bfv^*)}{ \Ghnk\bfu_{h_n}}\,,
	    \end{aligned}
	\end{align*}
	which, using the $\varepsilon$-Young inequality
Sldg        $\eqref{ineq:young}_{2}$ with $\psi\!=\!\vert \cdot\vert^p$ together with $p\!>\!2$, the discrete Sobolev embedding theorem \cite[Theorem 5.3]{ern-book}, the stability~properties~of~$\PiDGn$~and~\eqref{lem:ldg_stress.2},  for every $\varepsilon\hspace{-0.1em}>\hspace{-0.1em}0$, yields a constant $c_\varepsilon\hspace{-0.1em}>\hspace{-0.1em}0$ such that for~every~${\bfu_{h_n}\hspace{-0.1em}\in\hspace{-0.1em} \smash{V_{h_n}^k(0)}}$,~it~holds
	\begin{align}\label{lem:ldg.1}
	\begin{aligned}
	    &\vert \langle
            B_{h_n}^k\!\bfu_{h_n},\bfu_{h_n}\rangle_{\WDGn}\vert
            \\
            &\quad\leq \|\bfu_{h_n}\|_{2p'}\big\|\bfu_{h_n}+\Uppi_{h_n}^k\!\bfv^*\big\|_{2p'}\big\|\bfH_{h_n}^k\!\bfv^*-g\mathbf{I}_d\big\|_p\\&\quad\quad
	    +\big\|\Uppi_{h_n}^k\!\bfv^*\big\|_{2p'}\big\|\bfu_{h_n}+\Uppi_{h_n}^k\!\bfv^*\big\|_{2p'}\big\|\Ghnk\bfu_{h_n}\big\|_p\\
	    &\quad\leq c\,\|\bfu_{h_n}\|_{\nabla,p,h_n}^2\big
            (\|\bfv^*\|_{1,p} +\|g\|_p\big )+\big (\|\bfv^*\|_{1,p}
            +\|g\|_p\big )^2\|\bfu_{h_n}\|_{\nabla,p,h_n}
	    \\[-0.5mm]
	    &\quad\leq c_\varepsilon\,\left( \big
              (\|\bfv^*\|_{1,p}+\|g\|_p\big )^{2p'}+\big(\|\bfv^*\|_{1,p}+\|g\|_p\big)^{\frac{p}{p-2}}\right)+\varepsilon\,\|\bfu_{h_n}\|_{\nabla,p,h_n}^p\,.
	    \end{aligned}
	\end{align}
	Combining \eqref{lem:ldg_stress.4.1} and \eqref{lem:ldg.1} for sufficiently small $\varepsilon>0$, we conclude that~the~sequence~of operators $A_{h_n}^k\colon \WDGn\to (\WDGn)^*$, $n\in \setN$, satisfies (\hyperlink{AN.2}{AN.2}).
\end{proof}

Finally, we show that also the assumptions on the right-hand side in Theorem \ref{thm:non-conform_lim} are fulfilled in our setting.

\begin{lemma}\label{lem:rhs}
    Let $(h_n)_{n\in \setN}\subseteq \mathbb{R}_{>0}$ be such that $h_n\to 0$ $(n\to \infty)$~and~let~${p\in (1,\infty)}$.
     Then, the sequence $f_{h_n}^k\in (\WDGn)^*$, $n\in \setN$, satisfies (\hyperlink{BN.1}{BN.1}) and (\hyperlink{BN.2}{BN.2})~with~respect to $f\in \Vo(0)^*$,  defined, for every $\bfz\in \Vo(0)$, via $\langle f,\bfz\rangle_{\smash{\Vo(0)}}\coloneqq (\bfg,\bfz)+(\bfG,\nabla\bfz)$.
\end{lemma}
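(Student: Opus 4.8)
The plan is to read off both conditions directly from the definition \eqref{eq:bb} of $f_{h_n}^k$, i.e., from $\langle f_{h_n}^k,\bfz_{h_n}\rangle_{\WDGn}=\hskp{\bfg}{\bfz_{h_n}}+\bighskp{\bfG}{\Ghnk\bfz_{h_n}}$ for $\bfz_{h_n}\in\WDGn$; no machinery beyond Hölder's inequality, the discrete Poincaré inequality, and the weak $L^p$-convergence of the DG gradient recorded in \eqref{eq:Dconv} is required.

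For (\hyperlink{BN.1}{BN.1}) I would simply estimate, for $\bfz_{h_n}\in\WDGn$, $\vert\hskp{\bfg}{\bfz_{h_n}}\vert\le\|\bfg\|_{p'}\|\bfz_{h_n}\|_p\le c\,\|\bfg\|_{p'}\|\bfz_{h_n}\|_{\nabla,p,h_n}$ using the discrete Poincaré inequality $\|\bfz_{h_n}\|_p\le c\,\|\bfz_{h_n}\|_{\nabla,p,h_n}$ (cf.~\cite[Lemma A.9]{dkrt-ldg}), and $\vert\bighskp{\bfG}{\Ghnk\bfz_{h_n}}\vert\le\|\bfG\|_{p'}\|\Ghnk\bfz_{h_n}\|_p\le c\,\|\bfG\|_{p'}\|\bfz_{h_n}\|_{\nabla,p,h_n}$ using the norm equivalence \eqref{eq:eqiv0}; thus $\|f_{h_n}^k\|_{(\WDGn)^*}\le c\,(\|\bfg\|_{p'}+\|\bfG\|_{p'})$ with $c>0$ independent of $n$, hence $\sup_{n\in\setN}\|f_{h_n}^k\|_{(\WDGn)^*}<\infty$. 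The same computation with the classical Poincaré inequality in place of its discrete counterpart shows that $f$ indeed defines an element of $\Vo(0)^*$.

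For (\hyperlink{BN.2}{BN.2}) I would take a sequence $\bfv_n\in V_{h_{m_n}}^k(0)$, $n\in\setN$, with $m_n\to\infty$, satisfying \eqref{def:non-conform_pseudo.1}, so that $\sup_{n\in\setN}\|\bfv_n\|_{\nabla,p,h_{m_n}}<\infty$ and $\bfv_n\weakto\bfv$ in $L^p(\Omega)$ with $\bfv\in\Vo(0)$ by (\hyperlink{NC.2}{NC.2}). Writing $\langle f_{h_{m_n}}^k,\bfv_n\rangle_{W^{1,p}(\mathcal{T}_{h_{m_n}})}=\hskp{\bfg}{\bfv_n}+\bighskp{\bfG}{\boldsymbol{\mathcal{G}}_{h_{m_n}}^{k}\bfv_n}$, the first summand converges to $\hskp{\bfg}{\bfv}$ because $\bfg\in L^{p'}(\Omega)$ and $\bfv_n\weakto\bfv$ in $L^p(\Omega)$. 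For the second summand the key fact is the weak convergence of the DG gradient $\boldsymbol{\mathcal{G}}_{h_{m_n}}^{k}\bfv_n\weakto\nabla\bfv$ in $L^p(\Omega)$ as $n\to\infty$, which follows from the uniform DG-norm bound, $\bfv_n\weakto\bfv$ in $L^p(\Omega)$ and $\bfv\in\Vo(0)$, exactly as in \eqref{eq:Dconv} (cf.~\cite[Lemma A.37]{kr-phi-ldg}); testing against $\bfG\in L^{p'}(\Omega)$ gives $\bighskp{\bfG}{\boldsymbol{\mathcal{G}}_{h_{m_n}}^{k}\bfv_n}\to\bighskp{\bfG}{\nabla\bfv}$, and adding the two limits yields $\langle f_{h_{m_n}}^k,\bfv_n\rangle_{W^{1,p}(\mathcal{T}_{h_{m_n}})}\to\hskp{\bfg}{\bfv}+\bighskp{\bfG}{\nabla\bfv}=\langle f,\bfv\rangle_{\Vo(0)}$, which is precisely (\hyperlink{BN.2}{BN.2}).

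I do not anticipate a genuine obstacle: everything reduces to Hölder's inequality, the discrete Poincaré inequality, and the weak $L^p$-convergence of $\Ghnk$, the last being the very fact invoked in \eqref{eq:Dconv}. The only point needing a little care is the bookkeeping along the subsequence $(m_n)_{n\in\setN}$, namely that $h_{m_n}\to 0$ enters the hypotheses of \cite[Lemma A.37]{kr-phi-ldg} correctly --- which it does, since that lemma is stated precisely for mesh sizes tending to zero.
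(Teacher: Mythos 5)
Your proposal is correct and follows essentially the same route as the paper: (\hyperlink{BN.1}{BN.1}) via H\"older, the discrete Poincar\'e inequality from \cite[Lemma A.9]{dkrt-ldg} and the norm equivalence \eqref{eq:eqiv0}, and (\hyperlink{BN.2}{BN.2}) via the weak convergences $\bfu_{h_{m_n}}\weakto\bfu$ and $\boldsymbol{\mathcal{G}}_{h_{m_n}}^k\bfu_{h_{m_n}}\weakto\nabla\bfu$ in $L^p(\Omega)$ supplied by \cite[Lemma A.37]{kr-phi-ldg}. No gaps.
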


\begin{proof}
    Let us first verify (\hyperlink{BN.1}{BN.1}). To this end, let $\bfw_{h_n}\in \WDGn$ be arbitrary. Then, we have that 
    \begin{align}
        \big\langle f_{h_n}^k,\bfw_{h_n}\big\rangle_{\smash{W^{1,p}(\mathcal{T}_{h_n})}}\leq \|\bfg\|_{p'}\|\bfw_{h_n}\|_p+\|\bfG\|_{p'}\big\|\Ghnk\bfw_{h_n}\big\|_p\,.\label{lem:rhs.2}
    \end{align}
    Appealing to \cite[Lemma A.9]{dkrt-ldg}, it holds $\|\bfw_{h_n}\|_p\leq c\,\|\bfw_{h_n}\|_{\nabla,p,h_n}$, so that~from~\eqref{lem:rhs.2}, using the norm equivalence \eqref{eq:eqiv0}, it follows that ${\|f_{h_n}^k\|_{\smash{(\WDGn)^*}}\leq c\,\|\bfg\|_{p'}+c\,\|\bfG\|_{p'}}$.
    
    Next, let $\bfu_{h_{m_n}}\!\in\! V_{h_{m_n}}^k\!(0)$, $n\!\in \!\setN$, where $(m_n)_{n\in\setN}\!\subseteq\!\setN$~with~${m_n\!\to\! \infty}$~${(n\!\to\!\infty)}$,~be~a~se-quence satisfying \eqref{eq:conv.3} for some $\bfu\in \smash{\Vo(0)}$. Then,~using~\mbox{\cite[Lemma~A.37]{kr-phi-ldg}},~we~find~that
    \begin{align*}
		\begin{aligned}
		\boldsymbol{\mathcal{G}}_{h_{m_n}}^k\!\!\bfu_{h_{m_n}}&\weakto\nabla\bfu&&\quad\text{ in }L^p(\Omega)\quad(n\to \infty)\,,\\
			\bfu_{h_{m_n}}&\weakto\bfu&&\quad\text{ in }L^p(\Omega)\quad(n\to \infty)\,,
		\end{aligned}
	\end{align*}
	which, in view of \eqref{eq:bb}, immediately implies that (\hyperlink{BN.2}{BN.2}) holds.
\end{proof}

Now we have prepared everything to prove the well-posedness, stability, and (weak) convergence of our schemes.

\begin{proposition}[Well-posedness]\label{prop:well_posed}
    Let $(h_n)_{n\in \setN}\subseteq \mathbb{R}_{>0}$ be such that ${h_n\to 0}$ ${(n\to \infty)}$ and let ${p\in (2,\infty)}$.
    Then, for every $n\in \setN$, there exist
    $\smash{\bfv_{h_n}\!\in\! \Vhnk}$~\mbox{solving}~\eqref{eq:primal} and
    $\smash{q_{h_n}\in \Qhnkco}$ such that
    $\smash{(\bfv_{h_n},q_{h_n})^\top\in \Vhnk\times \Qhnkco}$ solves
    \eqref{eq:primal0.1}, \eqref{eq:primal0.2}.
\end{proposition}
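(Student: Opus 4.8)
The plan is to obtain the discrete velocity from the abstract existence statement Theorem~\ref{thm:non-conform_lim}(i), applied to the shifted operator equation~\eqref{eq:op}, to pass back to Problem~(P$_h$) via the ansatz~\eqref{ansatz}, and finally to reconstruct the pressure by a discrete inf-sup (de~Rham) argument.

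First I would fix $n\in\setN$ and collect the hypotheses of Theorem~\ref{thm:non-conform_lim}: Lemma~\ref{lem:nonconform} shows that $(V_{h_m}^k(0))_{m\in\setN}$ is a non-conforming approximation of $\Vo(0)$ with respect to $(\WDGn)_{m\in\setN}$ and $L^p(\Omega)$; Lemma~\ref{lem:ldg} (the step requiring $p\in(2,\infty)$) shows that the operators $A_{h_m}^k$ satisfy (\hyperlink{AN.1}{AN.1})--(\hyperlink{AN.3}{AN.3}) and are non-conforming pseudo-monotone with respect to $(V_{h_m}^k(0))_{m\in\setN}$ and $A\colon\Vo(0)\to\Vo(0)^*$; and Lemma~\ref{lem:rhs} shows that $f_{h_m}^k$ satisfies (\hyperlink{BN.1}{BN.1}), (\hyperlink{BN.2}{BN.2}) with respect to $f\in\Vo(0)^*$. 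Theorem~\ref{thm:non-conform_lim}(i) then yields, for the fixed index $n$, an element $\bfu_{h_n}\in V_{h_n}^k(0)$ with $(\textup{id}_{V_{h_n}^k(0)})^*A_{h_n}^k\bfu_{h_n}=f_{h_n}^k$ in $(V_{h_n}^k(0))^*$, i.e., $\bfu_{h_n}$ solves~\eqref{eq:op}. (For a single fixed $n$ the spaces involved are finite-dimensional, so one could instead invoke only (\hyperlink{AN.1}{AN.1})--(\hyperlink{AN.3}{AN.3}), (\hyperlink{BN.1}{BN.1}) and the classical main theorem on pseudo-monotone operators.) Putting $\bfv_{h_n}\coloneqq\bfu_{h_n}+\PiDGn\bfv^*\in\Vhnk$ as in~\eqref{ansatz} and using the equivalence of~\eqref{eq:op} with Problem~(P$_h$) established in Section~\ref{sec:ldg}, $\bfv_{h_n}$ solves~\eqref{eq:primal}; in particular $\bfv_{h_n}\in V_{h_n}^k(g-\textup{tr}(\boldsymbol{\mathcal{R}}_{h_n}^k\bfv^*))$.

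It remains to construct $q_{h_n}\in\Qhnkco$. Equation~\eqref{eq:primal0.2} holds automatically, since the membership $\bfv_{h_n}\in V_{h_n}^k(g-\textup{tr}(\boldsymbol{\mathcal{R}}_{h_n}^k\bfv^*))$ means exactly that $(\Divhnk\bfv_{h_n},z_h)=(g-\textup{tr}(\boldsymbol{\mathcal{R}}_{h_n}^k\bfv^*),z_h)$ for all $z_h\in\Qhnkc$, in particular for all $z_h\in\Qhnkco$. For~\eqref{eq:primal0.1} I would introduce the residual $\ell_n\in(\Vhnk)^*$ given by letting $\ell_n(\bfz_h)$ be the left-hand side minus the right-hand side of~\eqref{eq:primal} with $\bfv_h$ replaced by $\bfv_{h_n}$; by the preceding step, $\ell_n$ vanishes on $V_{h_n}^k(0)$. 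Now consider $T\colon\Vhnk\to(\Qhnkco)^*$, $\langle T\bfz_h,q_h\rangle\coloneqq(\Divhnk\bfz_h,q_h)$; since $(\Divhnk\bfz_h,1)=0$ for every $\bfz_h$, one has $\ker T=V_{h_n}^k(0)$, while the transpose $T^*\colon\Qhnkco\to(\Vhnk)^*$ is injective: for $q_h\in\Qhnkco$, which is a piecewise polynomial of vanishing mean and hence belongs to $L_0^p(\Omega)$, choosing $\bfz_h\coloneqq\PiDGn\mathcal{B}q_h\in\Vhnk$ with $\mathcal{B}$ the Bogovski\u{\i} operator (so that $\divo\mathcal{B}q_h=q_h$) gives, by~\eqref{eq:div-dg}, $\langle T^*q_h,\bfz_h\rangle=(\Divhnk\PiDGn\mathcal{B}q_h,q_h)=(\divo\mathcal{B}q_h,q_h)=\|q_h\|_2^2$. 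In this finite-dimensional setting the annihilator $V_{h_n}^k(0)^\circ$ coincides with the range of $T^*$, so there exists $q_{h_n}\in\Qhnkco$ with $\ell_n=T^*q_{h_n}$, i.e., $\ell_n(\bfz_h)=(\Divhnk\bfz_h,q_{h_n})=(q_{h_n}\mathbf{I}_d,\Dhnk\bfz_h)$ for all $\bfz_h\in\Vhnk$ (using $\Divhnk\bfz_h=\textup{tr}(\Dhnk\bfz_h)$). Subtracting $(q_{h_n}\mathbf{I}_d,\Dhnk\bfz_h)$ from both sides of~\eqref{eq:primal} then turns it into~\eqref{eq:primal0.1}, so $(\bfv_{h_n},q_{h_n})^\top$ solves~\eqref{eq:primal0.1},~\eqref{eq:primal0.2}.

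The bookkeeping with the equivalences from Section~\ref{sec:ldg} and the final rearrangement are routine. The main obstacle will be the pressure reconstruction, i.e., showing that $\ell_n$ lies in the range of the discrete gradient; this is a discrete inf-sup / de~Rham property of the pair $(\Vhnk,\Qhnkco)$, whose crucial ingredient is the compatibility identity~\eqref{eq:div-dg}, by which the composition of the Bogovski\u{\i} operator with the $L^2$-projection $\PiDGn$ serves as a right inverse of the discrete divergence when tested against continuous pressures.
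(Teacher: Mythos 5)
Your proposal is correct and follows the paper's overall strategy: existence of $\bfu_{h_n}\in V_{h_n}^k(0)$ for \eqref{eq:op} via Lemma~\ref{lem:nonconform}, Lemma~\ref{lem:ldg}, Lemma~\ref{lem:rhs} and Theorem~\ref{thm:non-conform_lim}~(i), the ansatz \eqref{ansatz} to pass to \eqref{eq:primal}, and then a discrete de~Rham step for the pressure. The only genuine difference lies in how the pressure is reconstructed. The paper invokes the quantitative discrete inf-sup (LBB) condition \eqref{eq:lbb} (citing \cite[Lemma 6.10]{ern-book}) and deduces from it that the discretely weak gradient $\smash{\widetilde{\nabla}_{h_n}^k}\colon \Qhnkco\to(\Vhnk(0))^\circ$ is bijective, so that the residual $(\textup{id}_{\Vhnk})^*(A_{h_n}^k\bfu_{h_n}-f_{h_n}^k)$, which annihilates $\Vhnk(0)$, is a discrete gradient. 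You instead argue purely algebraically in finite dimensions: with $T\bfz_h\coloneqq(\Divhnk\bfz_h,\cdot)$ you identify $\ker T=V_{h_n}^k(0)$ (your observation $(\Divhnk\bfz_h,1)=0$ is the right justification for passing between $\Qhnkc$ and $\Qhnkco$), prove injectivity of the transpose via the Bogovski\u{\i} operator combined with \eqref{eq:div-dg} — exactly the compatibility the paper itself uses to see $V_h^k(f)\neq\emptyset$ — and conclude $\textup{range}(T^*)=(\ker T)^\circ$ by rank counting; your sign convention $\ell_n(\bfz_h)=(q_{h_n},\Divhnk\bfz_h)$ is the one matching \eqref{eq:primal0.1} directly. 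Both routes yield existence of $q_{h_n}$; what the paper's LBB-based argument buys in addition is the quantitative bound $\beta\|q_{h_n}\|_{p'}\leq\|\widetilde{\nabla}_{h_n}^k q_{h_n}\|_{\smash{(\Vhnk)^*}}$, which is reused in the stability estimate for the pressure in Proposition~\ref{prop:stab}, whereas your rank argument is purely qualitative (and, as you note, your injectivity step is strictly speaking only needed for uniqueness). Your side remark that for fixed $n$ the classical main theorem on pseudo-monotone operators suffices is also consistent with how Theorem~\ref{thm:non-conform_lim}~(i) is proved.
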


\begin{proof}
    Appealing to Lemma \ref{lem:nonconform}, Lemma \ref{lem:ldg} and Lemma \ref{lem:rhs}, for the~existence~of $\bfu_{h_n}\in V_{h_n}^k(0)$ solving \eqref{eq:op}, we can just resort to Theorem \ref{thm:non-conform_lim} (i).
    As a consequence, $\smash{\bfv_{h_n}\hspace{-0.1em}\coloneqq \hspace{-0.1em}\bfu_{h_n}\hspace{-0.1em}+\hspace{-0.1em}\PiDGn\bfv^*\hspace{-0.2em}\in\hspace{-0.1em} \Vhnk}$ solves \eqref{eq:primal}.
    Apart~from~that,~resorting~to~\cite[Lemma~6.10]{ern-book}, we deduce the existence of a constant $\beta>0$ such that~for~every~${z_{h_n}\in \Qhnkco}$, it holds the LBB condition 
    \begin{align}
    \label{eq:lbb}
        \beta\|z_{h_n}\|_{p'}\leq \sup_{\bfz_{h_n}\in \Vhnk\setminus\{\mathbf{0}\}}{\frac{(\nabla z_{h_n},\bfz_{h_n})}{\|\bfz_{h_n}\|_{\nabla,p,h_n}}}
        =\vcentcolon\big\|\smash{\widetilde{\nabla}_{h_n}^k} z_{h_n}\big\|_{\smash{(\Vhnk)^*}}\,.
    \end{align}
    A direct consequence of the LBB condition \eqref{eq:lbb} is the
    surjectivity~and,~thus,~bijectivi-ty of the injective (discretely
    weak) gradient operator $\smash{\widetilde{\nabla}_{h_n}^k}\!
    \colon\! \Qhnkco\!\to\!(\Vhnk(0))^\circ\! $,~\mbox{defined} by
    $\langle\smash{\widetilde{\nabla}_{h_n}^k}
    z_{h_n},\bfz_{h_n}\rangle_{\Vhnk}\hspace{-0.1em}\coloneqq \hspace{-0.1em}(\nabla
    z_{h_n},\bfz_{h_n})\hspace{-0.1em}=\hspace{-0.1em}-(z_{h_n},\Divhnk\bfz_{h_n})
    $ for all~${\bfz_{h_n}\hspace{-0.1em}\in\hspace{-0.1em}
      \Vhnk}$~and~${z_{h_h}\hspace{-0.1em}\in\hspace{-0.1em} \Qhnkco}$
    (cf.~\eqref{eq:div-dg}), where $\smash{(\Vhnk(0))^\circ\hspace{-0.1em}\coloneqq \hspace{-0.1em}\{f_{h_n}\hspace{-0.1em}\in\hspace{-0.1em} (\Vhnk)^*\mid\langle f_{h_n},\bfz_{h_n}\rangle_{\Vhnk}\hspace{-0.1em}=\hspace{-0.1em}0\textup{ for all }\bfz_{h_n}\hspace{-0.1em}\in\hspace{-0.1em} \Vhnk(0)\}} $ denotes the annihilator of $\Vhnk(0)$. Thus, since ${(\textup{id}_{\smash{\Vhnk(0)}})^*(A_{h_n}^k\!\bfu_{h_n}\!-\!f_{h_n}^k)\!\in\! (\Vhnk(0))^\circ\!}$ (cf.~\!\eqref{eq:op}), we conclude the existence of $q_{h_n}\in \Qhnkco$ such that
    \begin{align}
     \label{eq:well_posed}
        \smash{\widetilde{\nabla}_{h_n}^k q_{h_n}=(\textup{id}_{\smash{\Vhnk}})^*\big(A_{h_n}^k\!\bfu_{h_n}-f_{h_n}^k\big)\quad\text{ in }(\Vhnk)^*}\,,
    \end{align}
    i.e., $\smash{(\bfv_{h_n},q_{h_n})^\top\in \Vhnk\times \Qhnkco}$
    solves \eqref{eq:primal0.1}, \eqref{eq:primal0.2}.
\end{proof}

\begin{proposition}[Stability]\label{prop:stab}
    Let $(h_n)_{n\in \setN}\subseteq \mathbb{R}_{>0}$ be such that~${h_n\to
      0}$ ${(n\to \infty)}$ and let ${p\in (2,\infty)}$.
    For every $n\in \setN$, let $\smash{(\bfv_{h_n},q_{h_n})^\top\in
      \Vhnk\times \Qhnkco}$ be a solution of
    \mbox{\eqref{eq:primal0.1}, \eqref{eq:primal0.2}}.  Then, we have that\vspace{-1mm}
    \begin{align*}
        \|\bfv_{h_n}\|_{\nabla,p,h_n}^p&\leq c\,\delta^p\,\vert \Omega\vert+c\,\|\bfg\|_{p'}^{p'}+
        c\,\|\bfG\|_{p'}^{p'}+c(\bfv^*)\,,\\
        \|q_{h_n}\|_{p'}^{p'}&\leq c\,\|\bfv_{h_n}\|_{\nabla,p,h_n}^{2p'}+
        c\,\delta^p\,\vert \Omega\vert+c\,\|\bfg\|_{p'}^{p'}+c\,\|\bfG\|_{p'}^{p'}+c(\bfv^*)\,,
    \end{align*}
    where $c(\bfv^*)\vcentcolon =\smash{\| \bfv^*\|_{1,p}^p+\big (\|\bfv^*\|_{1,p}+\|g\|_p\big
      )^{\frac{p}{p-2}}+\big (\|\bfv^*\|_{1,p}+\|g\|_p\big )^{2p'}}$,
    with a constant $c>0$ depending~only on $\alpha>0$, the characteristics of $\SSS$, and the chunkiness $\omega_0>0$.
\end{proposition}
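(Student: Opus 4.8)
The plan is to deduce the velocity bound by testing the operator equation~\eqref{eq:op} with the discrete solution itself, and to deduce the pressure bound from the discrete inf--sup condition~\eqref{eq:lbb} together with the representation~\eqref{eq:well_posed}. In both cases it suffices to assemble the coercivity and boundedness estimates already established in Lemmas~\ref{lem:ldg_stress}, \ref{lem:ldg_conv} and~\ref{lem:rhs}.

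\textit{Step 1 (velocity estimate).} Given a solution $(\bfv_{h_n},q_{h_n})^\top$ of~\eqref{eq:primal0.1}, \eqref{eq:primal0.2}, put $\bfu_{h_n}\coloneqq \bfv_{h_n}-\PiDGn\bfv^*\in \Vhnk(0)$ as in~\eqref{ansatz}; then $\bfu_{h_n}$ solves~\eqref{eq:op}. I would choose $\bfz_h=\bfu_{h_n}$ in~\eqref{eq:op} and use $A_{h_n}^k=S_{h_n}^k+B_{h_n}^k$ to write $\langle S_{h_n}^k\bfu_{h_n},\bfu_{h_n}\rangle_{\WDGn}=\langle f_{h_n}^k,\bfu_{h_n}\rangle_{\WDGn}-\langle B_{h_n}^k\bfu_{h_n},\bfu_{h_n}\rangle_{\WDGn}$. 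The left-hand side is bounded below by the coercivity estimate~\eqref{lem:ldg_stress.4.1}; the convective contribution on the right is estimated by~\eqref{lem:ldg.1}, i.e.\ for every $\varepsilon>0$ it is controlled by $\varepsilon\,\|\bfu_{h_n}\|_{\nabla,p,h_n}^p$ plus a term which, up to a constant, is $c(\bfv^*)$; and the linear functional $\langle f_{h_n}^k,\bfu_{h_n}\rangle_{\WDGn}=(\bfg,\bfu_{h_n})+(\bfG,\Ghnk\bfu_{h_n})$ (cf.~\eqref{eq:bb}) is treated by Hölder's inequality, the norm equivalence~\eqref{eq:eqiv0}, the estimate $\|\bfu_{h_n}\|_p\leq c\,\|\bfu_{h_n}\|_{\nabla,p,h_n}$, and the $\varepsilon$-Young inequality~\eqref{ineq:young} with $\psi=\vert\cdot\vert^p$, giving a bound $\varepsilon\,\|\bfu_{h_n}\|_{\nabla,p,h_n}^p+c_\varepsilon(\|\bfg\|_{p'}^{p'}+\|\bfG\|_{p'}^{p'})$. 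Choosing $\varepsilon$ small relative to $c\min\{1,\alpha\}$ absorbs the $\varepsilon\,\|\bfu_{h_n}\|_{\nabla,p,h_n}^p$ terms on the left and yields $\|\bfu_{h_n}\|_{\nabla,p,h_n}^p\leq c\,\delta^p\vert\Omega\vert+c\,\|\bfg\|_{p'}^{p'}+c\,\|\bfG\|_{p'}^{p'}+c(\bfv^*)$. The first asserted bound then follows from $\bfv_{h_n}=\bfu_{h_n}+\PiDGn\bfv^*$, the triangle inequality, the $W^{1,p}$-stability of $\PiDGn$ (so that $\|\PiDGn\bfv^*\|_{\nabla,p,h_n}\leq c\,\|\bfv^*\|_{1,p}$) and $\|\bfv^*\|_{1,p}^p\leq c(\bfv^*)$.

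\textit{Step 2 (pressure estimate).} Here I would start from the LBB inequality~\eqref{eq:lbb}, the identity~\eqref{eq:well_posed}, and the observation that the restriction operator $(\textup{id}_{\Vhnk})^*\colon (\WDGn)^*\to (\Vhnk)^*$ has norm at most one (the norms of $\Vhnk$ and $\WDGn$ agree), so that $\beta\,\|q_{h_n}\|_{p'}\leq \|S_{h_n}^k\bfu_{h_n}\|_{(\WDGn)^*}+\|B_{h_n}^k\bfu_{h_n}\|_{(\WDGn)^*}+\|f_{h_n}^k\|_{(\WDGn)^*}$. Inserting the boundedness estimate~\eqref{lem:ldg_stress.7.1} for $S_{h_n}^k$, the boundedness estimate~\eqref{lem:ldg_conv.2} for $B_{h_n}^k$, and $\|f_{h_n}^k\|_{(\WDGn)^*}\leq c\,(\|\bfg\|_{p'}+\|\bfG\|_{p'})$ (from the proof of Lemma~\ref{lem:rhs}), then raising to the power $p'$ and using $\bfv_{h_n}=\bfu_{h_n}+\PiDGn\bfv^*$ to replace $\|\bfu_{h_n}\|_{\nabla,p,h_n}$ by $\|\bfv_{h_n}\|_{\nabla,p,h_n}$, produces a bound of the form $\|q_{h_n}\|_{p'}^{p'}\leq c\,\|\bfv_{h_n}\|_{\nabla,p,h_n}^p+c\,\|\bfv_{h_n}\|_{\nabla,p,h_n}^{2p'}+c\,\delta^p\vert\Omega\vert+c\,\|\bfg\|_{p'}^{p'}+c\,\|\bfG\|_{p'}^{p'}+c(\bfv^*)$. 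Finally, absorbing the term $c\,\|\bfv_{h_n}\|_{\nabla,p,h_n}^p$ by means of the velocity bound from Step~1 gives the second asserted inequality.

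\textit{Main difficulty.} I do not anticipate a genuine obstacle; the proof is essentially bookkeeping, and the sole delicate point is the absorption of the convective term in Step~1. This works because for $p>2$ the convective term is subcritical: the discrete Sobolev embedding gives $\|\bfu_{h_n}\|_{2p'}\leq c\,\|\bfu_{h_n}\|_{\nabla,p,h_n}$ since $2p'<p^*$, and the $\varepsilon$-Young inequality with exponent $p$ then lets the resulting quadratic term be absorbed into the coercive $S_{h_n}^k$-contribution. Beyond that, one must keep careful track of the dependence of all constants on $\alpha>0$, the characteristics of $\SSS$ and the chunkiness $\omega_0>0$, and, in Step~2, of the part played by the inf--sup constant $\beta$ and by the norm of the restriction operator.
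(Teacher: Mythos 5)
Your proposal is correct and follows essentially the same route as the paper: the velocity bound is obtained by testing \eqref{eq:op} with $\bfu_{h_n}=\bfv_{h_n}-\PiDGn\bfv^*$ and combining the coercivity estimate \eqref{lem:ldg_stress.4.1}, the convective bound \eqref{lem:ldg.1}, the right-hand side estimate \eqref{lem:rhs.2} and an $\varepsilon$-Young absorption, followed by the stability $\|\PiDGn\bfv^*\|_{\nabla,p,h_n}\leq c\,\|\bfv^*\|_{1,p}$; the pressure bound comes from the LBB condition \eqref{eq:lbb} together with \eqref{eq:well_posed} and the boundedness estimates \eqref{lem:ldg_stress.7.1} and \eqref{lem:ldg_conv.2}, exactly as in the paper.
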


\begin{proof}
    Since $\bfu_{h_n}\vcentcolon =\bfv_{h_n}-\PiDGn \bfv^*\in V_{h_n}^k(0)$ solves \eqref{eq:op}, 
    combining \eqref{lem:ldg_stress.4.1}, \eqref{lem:ldg.1} and
    \eqref{lem:rhs.2} in Lemma \ref{lem:rhs}, and \eqref{eq:eqiv0},
    we find that for every $\varepsilon>0$, there exists~a~constant $c_\varepsilon>0$ such that
    \begin{align}
            \|\bfu_{h_n}\|_{\nabla,p,h_n}^p-c(\bfv^*)-c\,\delta^p\,\vert \Omega\vert
            &\leq \bigskp{A_{h_n}^k\!\bfu_{h_n}}{\bfu_{h_n}}_{\smash{\WDGn}}
            =     \bigskp{f_{h_n}^k}{\bfu_{h_n}}_{\smash{\WDGn}}\notag
            \\&\leq c_\varepsilon\,\|\bfg\|_{p'}^{p'}+c_\varepsilon\,\|\bfG\|_{p'}^{p'}+c\,\varepsilon\,\|\bfu_{h_n}\|_{\nabla,p,h_n}^p\,.\label{lem:stab.1}
    \end{align}
    Consequently, due to $\|\PiDGn\bfv^*\|_{\nabla,p,h_n}\leq c\,\|\bfv^*\|_{1,p}$ (cf.~\cite[(A.19)]{dkrt-ldg}), for~sufficiently~small $\varepsilon>0$ in \eqref{lem:stab.1}, we conclude that
    \begin{align}
        \begin{aligned}\label{lem:stab.2}
            \|\bfv_{h_n}\|_{\nabla,p,h_n}^p\leq c\,\delta^p\,\vert     \Omega\vert+
            c\,\|\bfg\|_{p'}^{p'}+c\,\|\bfG\|_{p'}^{p'}+c(\bfv^*)\,.
        \end{aligned}
    \end{align}
    Apart from that, appealing to \eqref{eq:lbb} and \eqref{eq:well_posed}, we have that
    \begin{align}
            \beta \|q_{h_n}\|_{p'}&\leq \big\|\widetilde{\nabla}^k_{h_n} q_{h_n} \big\|_{\smash{(\Vhnk)^*}}
            =\big\|(\textup{id}_{\Vhnk})^*(A^k_{h_n}\! \bfu_{h_n}-f_{h_n}^k)\big\|_{\smash{(\Vhnk)^*}}\label{lem:stab.3}\\&\leq \big\|S^k_{h_n}\! \bfu_{h_n}\big\|_{\smash{(\WDGn)^*}}\!+\big\|B^k_{h_n}\! \bfu_{h_n}\big\|_{\smash{(\WDGn)^*}}\! +\big\|f^k_{h_n}\big\|_{\smash{(\WDGn)^*}}\,. \notag 
    \end{align}
    Eventually, combining \eqref{lem:ldg_stress.7.1}, \eqref{lem:ldg_conv.2} and \eqref{lem:stab.1}, we conclude that
    \begin{align*}
        \|q_{h_n}\|_{p'}^{p'}&\leq c\,\|\bfu_{h_n}\|_{\nabla,p,h_n}^p+c\,\delta^p\,\vert  \Omega\vert+ c\,\|\bfg\|_{p'}^{p'}+c\,\|\bfG\|_{p'}^{p'}+ c(\bfv^*) + c\,\|\bfu_{h_n}\|_{\nabla,p,h_n}^{2p'}\,.
    \end{align*}
\end{proof}

\begin{theorem}[Convergence]\label{thm:convergence}
  Let $(h_n)_{n\in \setN}\subseteq \mathbb{R}_{>0}$ be such
  that~${h_n\to 0}$ ${(n\to \infty)}$ and let ${p\in (2,\infty)}$.
  For every $n\in \setN$, let
  $\smash{(\bfv_{h_n},q_{h_n})^\top\in \Vhnk\times \Qhnkco}$
  be~a~solution~of \mbox{\eqref{eq:primal0.1}, \eqref{eq:primal0.2}}.
  Then, there exists a subsequence
  $(h_{m_n})_{n\in \setN}\subseteq \mathbb{R}_{>0}$, where
  $(m_n)_{n\in \setN}\subseteq \setN$ with $m_n\to \infty$
  $(n\to \infty)$, and $(\bfv,q)^\top\in V(g)\times \Qo$~such~that
    \begin{align}\label{thm:convergence.0}
        \begin{aligned}
            \boldsymbol{\mathcal{G}}_{h_{m_n}}^k\!\!\bfv_{h_{m_n}}&\weakto \nabla \bfv&&\quad\text{ in }L^p(\Omega)&&\quad(n\to \infty)\,,\\
             \boldsymbol{\mathcal{D}}_{h_{m_n}}^k\!\!\bfv_{h_{m_n}}&\to \bfD \bfv&&\quad\text{ in }L^p(\Omega)&&\quad(n\to \infty)\,,\\
            \bfv_{h_{m_n}}&\to \bfv&&\quad\text{ in }L^q(\Omega)&&\quad(n\to \infty)\quad q<p^*\,,\\
            q_{h_{m_n}}&\weakto q&&\quad\text{ in }L^{p'}(\Omega)&&\quad(n\to \infty)\,,
        \end{aligned}
    \end{align}
    where $p^*\coloneqq \!\frac{dp}{d-p}$ if $p\!<\!d$ and
    $p^*\coloneqq \!\infty$ if $p\!\ge\! d$, and $\bfv\!\in\! V(g)$
    solves~Problem~(P),~cf.~\eqref{eq:p}, while
    $\smash{(\bfv,q)^\top\in V(g)\times \Qo}$ solves Problem (Q),
    cf.~\eqref{eq:q1}, \eqref{eq:q2}.
\end{theorem}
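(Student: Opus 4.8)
The plan is to apply the abstract convergence machinery of Theorem~\ref{thm:non-conform_lim} to the shifted operator equation \eqref{eq:op} and then recover the pressure. First I would invoke the preparatory lemmas: by Lemma~\ref{lem:nonconform} the spaces $(V_{h_n}^k(0))_{n\in\setN}$ form a non-conforming approximation of $\Vo(0)$ with respect to $(\WDGn)_{n\in\setN}$ and $L^p(\Omega)$; by Lemma~\ref{lem:ldg} the operators $A_{h_n}^k$ satisfy (\hyperlink{AN.1}{AN.1})--(\hyperlink{AN.3}{AN.3}) and are non-conforming pseudo-monotone with respect to $(V_{h_n}^k(0))_{n\in\setN}$ and $A\colon\Vo(0)\to\Vo(0)^*$; and by Lemma~\ref{lem:rhs} the functionals $f_{h_n}^k$ satisfy (\hyperlink{BN.1}{BN.1}) and (\hyperlink{BN.2}{BN.2}) with respect to $f$. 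Theorem~\ref{thm:non-conform_lim} then yields, for the solutions $\bfu_{h_n}\coloneqq\bfv_{h_n}-\PiDGn\bfv^*\in V_{h_n}^k(0)$ of \eqref{eq:op} (which exist by Proposition~\ref{prop:well_posed}), a subsequence (indexed by $m_n\to\infty$) and an element $\bfu\in\Vo(0)$ with $\bfu_{h_{m_n}}\weakto\bfu$ in $L^p(\Omega)$ and $A\bfu=f$ in $\Vo(0)^*$. Setting $\bfv\coloneqq\bfu+\bfv^*\in V(g)$, this last identity is precisely the weak formulation \eqref{eq:p} of Problem~(P), since by definition $\langle A\bfu,\bfz\rangle_{\Vo(0)}=(\SSS(\bfD\bfv),\bfD\bfz)+([\nabla\bfv]\bfv,\bfz)$ and $\langle f,\bfz\rangle_{\Vo(0)}=(\bfg,\bfz)+(\bfG,\nabla\bfz)$ for all $\bfz\in\Vo(0)$.

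Next I would upgrade the convergences. The weak convergence $\boldsymbol{\mathcal{G}}_{h_{m_n}}^k\bfv_{h_{m_n}}\weakto\nabla\bfv$ in $L^p(\Omega)$ follows from $\Ghnk\bfu_{h_n}\weakto\nabla\bfu$ (cf.~\cite[Lemma~A.37]{kr-phi-ldg}, as in \eqref{eq:Dconv}) together with $\boldsymbol{\mathcal{G}}_{h_{m_n}}^k\PiDGn\bfv^*\to\nabla\bfv^*$; the $L^q$-convergence $\bfv_{h_{m_n}}\to\bfv$ for $q<p^*$ comes from the discrete Rellich--Kondrachov theorem \cite[Theorem~5.6]{ern-book} applied to the $\nabla,p,h$-bounded sequence $\bfu_{h_n}$, plus $\PiDGn\bfv^*\to\bfv^*$ in $L^q(\Omega)$. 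The strong convergence $\boldsymbol{\mathcal{D}}_{h_{m_n}}^k\bfv_{h_{m_n}}\to\bfD\bfv$ in $L^p(\Omega)$ is the delicate point: I would test \eqref{eq:primal} (equivalently \eqref{eq:op}) with $\bfz_{h_n}=\bfu_{h_n}$ and pass to the limit, using (\hyperlink{BN.2}{BN.2}) and the established convergences of the $B_{h_n}^k$-term (cf.~\eqref{eq:conv.6}) to show $\limsup_n\langle S_{h_n}^k\bfu_{h_n},\bfu_{h_n}-\bfu\rangle_{\WDGn}\le 0$; combined with the monotonicity structure \eqref{eq:ass_S} of $\SSS$ — specifically the lower bound involving $\phi_{|\bfA^{\textup{sym}}|}(|\bfA^{\textup{sym}}-\bfB^{\textup{sym}}|)$ together with the coercivity estimate \eqref{lem:ldg_stress.4.1} — this forces both $\int_\Omega\varphi(|\Dhnk\bfu_{h_n}-\bfD\bfu|)\,\textup{d}x\to 0$ and the jump contribution $h_n\int_{\Gamma_{h_n}}\varphi(h_n^{-1}|\jump{\bfu_{h_n}\otimes\bfn}|)\,\textup{d}s\to 0$, whence $\|\Dhnk\bfu_{h_n}-\bfD\bfu\|_p\to 0$ by the $\Delta_2$-properties of $\varphi$ and shifting \eqref{eq:shift}; adding $[\bfH_{h_n}^k\bfv^*]^{\textup{sym}}\to\bfD\bfv^*$ (cf.~\cite[Lemma~A.27]{kr-phi-ldg}) gives the claim.

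Finally I would treat the pressure. From Proposition~\ref{prop:stab} the sequence $\|q_{h_n}\|_{p'}$ is bounded, so along a further subsequence $q_{h_{m_n}}\weakto q$ in $L^{p'}(\Omega)$ for some $q\in\Qo$ (the zero-mean constraint passes to the limit). To identify $q$, I would test \eqref{eq:primal0.1} with an arbitrary $\bfz_{h_n}=\PiDGn\bfz$ for fixed $\bfz\in C_0^\infty(\Omega)$, use \eqref{eq:div-dg} to rewrite the pressure term as $(q_{h_n},\divo\bfz)$, and pass to the limit using the strong convergence of $\Dhnk\bfv_{h_n}$, the already-identified momentum balance, and the convergences of the convective and flux terms; since $\bfz$ ranges over a dense set and the limiting velocity $\bfv$ satisfies \eqref{eq:p}, the negative-norm / DeRham argument identifies $q$ as the pressure, i.e.~$(\bfv,q)$ solves \eqref{eq:q1}, \eqref{eq:q2}. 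The main obstacle is the strong convergence of the symmetric DG gradient: one must carefully handle the stabilization jump term and exploit the precise $(p,\delta)$-structure \eqref{eq:ass_S} rather than mere monotonicity, mirroring the argument of \cite[Lemma~4.1]{kr-phi-ldg} but now in the limit $n\to\infty$.
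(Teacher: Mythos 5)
Your architecture matches the paper's proof: Lemmas \ref{lem:nonconform}, \ref{lem:ldg} and \ref{lem:rhs} feed Theorem \ref{thm:non-conform_lim}, giving $\bfu\in \Vo(0)$ with $A\bfu=f$ and hence Problem (P) for $\bfv=\bfu+\bfv^*$; Proposition \ref{prop:stab} gives the weak $L^{p'}$-limit $q$; and Problem (Q) is identified by testing \eqref{eq:well_posed} with $\PiDGn\bfz$ and passing to the limit (your appeal to a DeRham/negative-norm argument is superfluous: the limit identity \eqref{eq:q1} is obtained directly, and \eqref{eq:q2} holds because $\bfv\in V(g)$).

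The one step that does not close as written is your derivation of $\limsup_{n}\langle S^k_{h_n}\bfu_{h_n},\bfu_{h_n}-\bfu\rangle_{\smash{\WDGn}}\le 0$. Testing \eqref{eq:op} with $\bfz_{h_n}=\bfu_{h_n}$ yields, since $\Dhnk\bfu=\bfD\bfu$ and $\jump{\bfu\otimes\bfn}=\bfzero$, that $\langle S^k_{h_n}\bfu_{h_n},\bfu_{h_n}-\bfu\rangle_{\smash{\WDGn}}=\langle f^k_{h_n},\bfu_{h_n}\rangle_{\smash{\WDGn}}-\langle B^k_{h_n}\bfu_{h_n},\bfu_{h_n}\rangle_{\smash{\WDGn}}-(\SSS(\bfL^{\textup{sym}}_{h_n}),\bfD\bfu)$; (\hyperlink{BN.2}{BN.2}) and the argument of \eqref{eq:conv.6} handle the first two terms, but neither controls the third: you would need $\liminf_n(\SSS(\bfL^{\textup{sym}}_{h_n}),\bfD\bfu)\ge(\SSS(\bfD\bfv),\bfD\bfu)$, which is precisely the identification of the weak $L^{p'}$-limit of the discrete stresses that the whole step is meant to prove, and (\hyperlink{BN.2}{BN.2}) does not apply to the fixed, non-discrete element $\bfu$. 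The paper closes exactly this gap by a recovery-sequence device: it tests \eqref{eq:op} with $\bfu_{h_n}-\PiDGn\bfu\in V^k_{h_n}(0)$ and compares $\SSS(\bfL^{\textup{sym}}_{h_n})$ with $\SSS(\Dhnk\PiDGn\bfu+[\bfH^k_{h_n}\bfv^*]^{\textup{sym}})$, which converges strongly in $L^{p'}(\Omega)$ to $\SSS(\bfD\bfv)$ by \cite[Lemma A.27]{kr-phi-ldg}; then all right-hand side terms in \eqref{thm:convergence.6} vanish in the limit (using the approximation properties of $\PiDGn$, the boundedness of the stabilization term, and $\jump{\PiDGn\bfu\otimes\bfn}=\jump{(\PiDGn\bfu-\bfu)\otimes\bfn}$ together with $\jump{\bfh_{h_n}^k\bfv^*\otimes\bfn}$ tending to zero), and \eqref{eq:ass_S} with $p>2$ then gives $\bfL^{\textup{sym}}_{h_n}\to\bfD\bfv$ in $L^p(\Omega)$, i.e.\ \eqref{thm:convergence.0}$_2$. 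With this substitute (or by re-deriving inside your proof the weak-$*$ identification $(\textup{id}_{\Vo(0)})^*A^k_{h_n}\bfu_{h_n}\weakto f$, which is used in the proof of Theorem \ref{thm:non-conform_lim} but is not part of its statement), your argument coincides with the paper's; your additional claim that the jump contribution tends to zero is true but not needed for the assertion.
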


\begin{proof}
    Appealing to Lemma \ref{lem:nonconform}, Lemma \ref{lem:ldg} and Lemma \ref{lem:rhs},  Theorem~\ref{thm:non-conform_lim}~(iii) and the discrete Rellich--Kondrachov theorem (cf.~\cite[Theorem 5.6]{ern-book}) guarantee the existence of a subsequence $(h_{m_n})_{n\in \setN}\subseteq \mathbb{R}_{>0}$, where $(m_n)_{n\in \setN}\subseteq \setN$~with~${m_n\to \infty}$~${(n\to \infty)}$, and $\bfu\in \Vo(0)$ such that
    \begin{align}\label{thm:convergence.1}
        \begin{aligned}
            \boldsymbol{\mathcal{G}}_{h_{m_n}}^k\!\bfu_{h_{m_n}}&\weakto \nabla \bfu&&\quad\text{ in }L^p(\Omega)&&\quad(n\to \infty)\,,\\[-0.5mm]
            \bfu_{h_{m_n}}&\to \bfu&&\quad\text{ in }L^q(\Omega)&&\quad(n\to \infty)\quad q<p^*\,,
        \end{aligned}
    \end{align}
    and ${A \bfu=f}$~in~$\Vo(0)^*$. Setting $\bfv_{h_{m_n}}\coloneqq \bfu_{h_{m_n}}-\PiDGn\bfv^*\in \Vhnk(0)$ for   every $n\in \setN$, and $\bfv\coloneqq \bfu+\bfv^*\in V(g)$, we see
that \eqref{thm:convergence.1} and    \cite[Lemma A.27]{kr-phi-ldg}
imply that
    \begin{align}\label{thm:convergence.2}
        \begin{aligned}
            \boldsymbol{\mathcal{G}}_{h_n}^k\!\bfv_{h_n}&\weakto \nabla \bfv&&\quad\text{ in }L^p(\Omega)&&\quad(n\to \infty)\,,\\
            \bfv_{h_n}&\to \bfv&&\quad\text{ in }L^q(\Omega)&&\quad(n\to \infty)\quad q<p^*\,,
        \end{aligned}
    \end{align}
    and that $\bfv\in V(g)$ solves Problem (P), cf.~\eqref{eq:p}.

    It remains to deal with Problem (Q) and
    \eqref{thm:convergence.1}$_2$.  For simplicity, we assume that
    $m_n=n$ for all $n \in \setN$. Since~the~sequence ${(q_{h_n})_{n\in
        \setN}\subseteq \Qo}$ is bounded (cf.~Proposition~\ref{prop:stab}), there exists a not
    relabeled~subsequence~and~${q\in \Qo}$ such that
    \begin{align}\label{thm:convergence.3}
        q_{h_n}\weakto q\quad\textup{ in }L^{p'}(\Omega)\quad(n\to \infty)\,.
    \end{align}
    Next, let us verify that $(\bfv,q)^\top\!\in\! V\times\Qo$ solves
    Problem (Q), cf.~\eqref{eq:q1}, \eqref{eq:q2}.~To~this~end, take
    $\bfz\!\in\! \Vo$ and set
    $\bfz_{h_n}\coloneqq \!\PiDGn\!\bfz_n\!\in\!\Vhnk$ for all
    $n\!\in\!\setN$.~Due~to~\eqref{eq:well_posed},~for~all~${n\!\in\! \setN}$,~it~holds
    \begin{align}\label{thm:convergence.4}
            \bighskp{q_{h_n}}{\Divhnk \bfz_{h_n}}=
          \bigskp{A_{h_n}^k\!\bfu_{h_n}-f_{h_n}^k}{\bfz_{h_n}}_{W^{1,p}(\mathcal{T}_{h_n})}\,.
    \end{align}
    The approximation properties of $\PiDGn$ (cf.~\cite[Lemma A.27]{kr-phi-ldg}),  \eqref{thm:convergence.1} and \eqref{thm:convergence.3} yield
    \begin{align}\label{thm:convergence.5}
        \begin{aligned}
            \bighskp{q_{h_n}}{\Divhnk \bfz_{h_n}}&\to  \hskp{q}{\divo \bfz}&&\quad(n\to \infty)\,,\\
            \bigskp{f_{h_n}^k-B_{h_n}^k\!\bfu_{h_n}}{\bfz_{h_n}}_{W^{1,p}(\mathcal{T}_{h_n})}&\to (\bfg,\bfz)+(\bfG,\nabla\bfz)
            -([\nabla\bfv]\bfv,\bfz)&&\quad(n\to \infty) \,.
        \end{aligned}
    \end{align}
    Hence, it is only left to determine that $\smash{\langle
      S_{h_n}^k\!\bfu_{h_n},\bfz_{h_n}\rangle_{\smash{W^{1,p}(\mathcal{T}_{h_n})}}\!\to\!
      (\SSS(\bfD\bfv),\bfD\bfz)}$~${(n\!\to\! \infty)}$. To see this,
    again, abbreviating
    $\smash{\bfL_{h_n}^{\textup{sym}}\coloneqq \Dhnk\!\bfu_{h_n}+[\bfH_{h_n}^k\!\bfv^*]^{\textup{sym}}\in
      L^p(\Omega)}$ for all $n\in \setN$ to shorten the notation, we first find, using \eqref{eq:op} and the monotonicity of $\SSS$, that
    \begin{align} 
        \begin{aligned}\label{thm:convergence.6}
            &\bighskp{\SSS(\bfL_{h_n}^{\textup{sym}})-\SSS(\Dhnk\!\PiDGn\!\bfu+[\bfH_{h_n}^k\!\bfv^*]^{\textup{sym}})}{\Dhnk\!\bfu_{h_n}-\Dhnk\!\PiDGn\!\bfu}
            \\&\quad=\bigskp{f_{h_n}^k-B_{h_n}^k\!\bfu_{h_n}}{\bfu_{h_n}-\PiDGn\!\bfu}_{W^{1,p}(\mathcal{T}_{h_n})}\\&\quad\quad+
            \bighskp{\SSS(\Dhnk\!\PiDGn\!\bfu+[\bfH_{h_n}^k\!\bfv^*]^{\textup{sym}})}{\Dhnk\!\PiDGn\!\bfu-\Dhnk\!\bfu_{h_n}}
            \\&\quad\quad-\alpha \,\big\langle\SSS_{   \sssn}(h_n^{-1}\jump{(\bfu_{h_n}+\bfh_{h_n}^k\!\bfv^*)\otimes\bfn}),\jump{(\bfu_{h_n}-\PiDGn\!\bfu  )\otimes\bfn}\big\rangle_{\Gamma_{h_n}}
            \\&\quad\leq\bigskp{f_{h_n}^k-B_{h_n}^k\!\bfu_{h_n}}{\bfu_{h_n}-\PiDGn\!\bfu}_{W^{1,p}(\mathcal{T}_{h_n})}\\&\quad\quad+
            \bighskp{\SSS(\Dhnk\!\PiDGn\!\bfu+[\bfH_{h_n}^k\!\bfv^*]^{\textup{sym}})}{\Dhnk\!\PiDGn\!\bfu-\Dhnk\!\bfu_{h_n}}
            \\&\quad\quad-\alpha \,\big\langle\SSS_{   \sssn}(h_n^{-1}\jump{(\bfu_{h_n}+\bfh_{h_n}^k\!\bfv^*)\otimes\bfn}),\jump{(\bfh_{h_n}^k\!\bfv^*-\PiDGn\!\bfu  )\otimes\bfn}\big\rangle_{\Gamma_{h_n}}\,.
            \end{aligned}
    \end{align}
    To show that the limit superior with respect to $n\to \infty$ in
    \eqref{thm:convergence.6} is non-positive, we show that all terms
    on the right-hand side of \eqref{thm:convergence.6} converge to
    zero. For the first term, we use Lemma \ref{lem:ldg}, Lemma
    \ref{lem:rhs} and
    $\smash{\|\bfu -\PiDGn\!\bfu\|_{\nabla,p,h_n}}\to 0$
    $(n\to \infty)$~(cf.~\mbox{\cite[Lemma~A.27]{kr-phi-ldg}}). For the
    second term, we use again \cite[Lemma~A.27]{kr-phi-ldg}, which
    yields
    \begin{align}
     \smash{\Dhnk\!\PiDGn\!\bfu+[\bfH_{h_n}^k\!\bfv^*]^{\textup{sym}}\to
      \bfD\bfu+\bfD\bfv^*=\bfD\bfv \quad \textrm { in } L^p(\Omega)\quad 
      (n\to\infty)\,.}\label{eq:strong}
    \end{align}
    Due to Assumption \ref{assum:extra_stress}, the extra stress tensor
    $\SSS$ is a Nemytski\u{\i} operator and, therefore, we obtain
    from \eqref{eq:strong} that 
    \begin{align}
     \smash{\SSS(\Dhnk\!\PiDGn\!\bfu+[\bfH_{h_n}^k\!\bfv^*]^{\textup{sym}})\to
      \SSS(\bfD\bfv) \quad \textrm { in } L^{p'}(\Omega)\quad 
      (n\to\infty)\,.}\label{eq:strong.1}
    \end{align}
    The convergences \eqref{thm:convergence.1}, \eqref{eq:strong}
    and \eqref{eq:strong.1} show that the second term~in~\eqref{thm:convergence.6}~converges to zero. To handle the third
    term, we use H\"older's inequality,  \eqref{lem:ldg_stress.7},
    \eqref{thm:convergence.1},~and \cite[Lemma~A.27]{kr-phi-ldg},
    to obtain  ${h_n\|h_n^{-1}\bfh_{h_n}^k\!\bfv^*\|_{\smash{p,\Gamma_{h_n}}}^{p}}+\smash{h_n\|h_n^{-1}\jump{\bfu_{h_n}\! \!\otimes \!\bfn}\|_{\smash{p,\Gamma_{h_n}}}^{p}\to\!
      0}$ ${(n\!\to\! \infty)}$. Thus, we showed, also using
    ${\Dhnk\!\bfu_{h_n}\!\!-\!\Dhnk\!\PiDGn\!\bfu\!=\!\bfL_{h_h}^{\textup{sym}} \!-\!(\Dhnk\!\PiDGn\!\bfu\!+\![\bfH_{h_n}^k\!\bfv^*]^{\textup{sym}})}$,~that 
    \begin{align*}
        \smash{\limsup_{n\to
      \infty}{\bighskp{\SSS(\bfL_{h_n}^{\textup{sym}})\!-\!\SSS(\Dhnk\!\PiDGn\!\bfu+
      [\bfH_{h_n}^k\!\bfv^*]^{\textup{sym}})}{\bfL_{h_h}^{\textup{sym}} \!-\!(\Dhnk\!\PiDGn\!\bfu+[\bfH_{h_n}^k\!\bfv^*]^{\textup{sym}})}}\leq
      0\,. }
    \end{align*}
    This,~in~turn,~implies, using \eqref{eq:strong} and
    \eqref{eq:strong.1}, that 
    \begin{align}\label{thm:convergence.8}
         \smash{\limsup_{n\to \infty}{\bighskp{\SSS(\bfL_{h_n}^{\textup{sym}})-\SSS(\bfD\bfv)}{\bfL_{h_n}^{\textup{sym}}-\bfD\bfv}}\leq 0\,.}
    \end{align}
    Appealing to Assumption \ref{assum:extra_stress} and Remark \ref{rem:phi_a}, since $p>2$, for every $n\in \mathbb{N}$,~we~have~that
    \begin{align}\label{thm:convergence.8.1}
        \|\bfL_{h_n}^{\textup{sym}}-\bfD\bfv\|_p^p\leq  c\bighskp{\SSS(\bfL_{h_n}^{\textup{sym}})-\SSS(\bfD\bfv)}{\bfL_{h_n}^{\textup{sym}}-\bfD\bfv}\,.
    \end{align}
    Combining \eqref{thm:convergence.8}, \eqref{thm:convergence.8.1} and $\smash{[\bfH_{h_n}^k\!\bfv^*]^{\textup{sym}}\!\to\! \bfD \bfv^*}$ in $L^p(\Omega)$ $(n\!\to\! \infty)$,~we~conclude~that
    \begin{align}\label{thm:convergence.8.2}
        \begin{aligned}
            \bfL_{h_n}^{\textup{sym}}&\to \bfD\bfv&&\quad\textrm{ in }L^p(\Omega)\quad(n\to \infty)\,,\\
            \boldsymbol{\mathcal{D}}_{h_n}^k\!\bfv_{h_n}&\to \bfD\bfv&&\quad\textrm{ in }L^p(\Omega)\quad(n\to \infty)\,,
        \end{aligned}
    \end{align}
    which proves \eqref{thm:convergence.0}$_2$.
    Since $\SSS$ is a Nemytski\u{\i} operator, we, in turn, deduce from
    \eqref{thm:convergence.8.2} that
   $\SSS(\bfL_{h_n}^{\textup{sym}})\to \SSS(\bfD\bfv)$ in
    $\smash{L^{p'}(\Omega)}$ $(n\to\infty)$~and,~thus,
     \begin{align}\label{thm:convergence.9}
         \bigskp{S_{h_n}^k\!\bfu_{h_n}}{\bfz_{h_n}}\to (\SSS(\bfD\bfv),\bfD\bfz)\quad(n\to \infty)\,.
     \end{align}
    As a result, by passing for $n\to \infty$ in \eqref{thm:convergence.4}, using \eqref{thm:convergence.3},  \eqref{thm:convergence.5} and \eqref{thm:convergence.9}~in~doing~so, for every $\bfz\in \Vo$, we conclude that
    \begin{align}
        \begin{aligned}\label{thm:convergence.10}
            (q,\divo \bfz)=(\SSS(\bfD\bfv),\bfD\bfz)+([\nabla\bfv] \bfv,\bfz)-(\bfg,\bfz)-(\bfG,\bfD\bfz)\,,
        \end{aligned}
    \end{align}
    i.e., $(\bfv,q)^\top\in V\times\Qo$ solves Problem (Q).
\end{proof}
\begin{remark}
  {\rm To the best of the authors knowledge, this is the first
    convergence results of a DG method for the $p$-Navier-Stokes
    problem \eqref{eq:p-navier-stokes}. A similar result for the Navier-Stokes problem ($p=2$)
    can be found in \cite{ern,ern-book}, {where a different
    discretization of the convective term is treated.} We would like to emphasize that~we~treat the fully non-homogeneous problem, which is the
    reason that we restrict ourselves to the case $p>2$. In the
    homogeneous case, i.e., $\bfv_0=\bfzero$ and $g=0$, the same methods
    would work for $p>\frac {3d}{d+2}$.  Even for the Navier-Stokes
    problem ($p=2$) most previous results (except \cite{ern,ern-book})
    prove convergence of the method through appropriate convergence
    rates under additional regularity assumptions on the solution of
    the limiting problem. 
    In~the~second part of this paper, we also show convergence rates
    for our method in the homogeneous case  and for $\bfG=\bfzero$.
  }
\end{remark}

This method of proof of course also works for the $p$-Stokes
problem, i.e., we neglect the \hspace{-0.2mm}convective \hspace{-0.2mm}term \hspace{-0.2mm}in
\hspace{-0.2mm}\eqref{eq:p-navier-stokes}, \hspace{-0.2mm}Problem \hspace{-0.2mm}(P\hspace{-0.2mm}), \hspace{-0.2mm}Problem \hspace{-0.2mm}(P$_h$\hspace{-0.2mm}), \hspace{-0.2mm}Problem \hspace{-0.2mm}(\hspace{-0.2mm}Q),
\hspace{-0.2mm}and~\hspace{-0.2mm}\mbox{Problem~\hspace{-0.2mm}(\hspace{-0.2mm}Q$_h$\hspace{-0.2mm})}. For every $p\in (1,\infty)$, the existence of solutions for the corresponding
discrete problems follows  from Lemma
\ref{lem:ldg_stress}, Lemma \ref{lem:rhs} and Theorem
\ref{thm:non-conform_lim} (i) in the same way as in the proof of
Proposition \ref{prop:well_posed}. Also the assertions of Proposition
\ref{prop:stab} hold for the $p$-Stokes problem analogously for every $p
\in (1,\infty)$, but now with the constant $c(\bfv^*)\coloneqq \|\bfv^*\|_{1,p}^p$.
Concerning the convergence of the discrete solutions, we have the
following result:\enlargethispage{5mm}
\begin{theorem}[Convergence]\label{thm:convergence1}
  Let $(h_n)_{n\in \setN}\subseteq \mathbb{R}_{>0}$ be such
  that~${h_n\to 0}$ ${(n\to \infty)}$ and let ${p\in (1,\infty)}$.
  For every $n\in \setN$, let
  $\smash{(\bfv_{h_n},q_{h_n})^\top\in \Vhnk\times \Qhnkco}$
  be~a~solution~of \mbox{\eqref{eq:primal0.1}, \eqref{eq:primal0.2}}
  without the terms coming from the convective term.
  Then, there exists a subsequence
  $(h_{m_n})_{n\in \setN}\subseteq \mathbb{R}_{>0}$, where
  $(m_n)_{n\in \setN}\subseteq \setN$ with $m_n\to \infty$
  $(n\to \infty)$, and $(\bfv,q)^\top\in V(g)\times \Qo$~such~that\vspace{-1mm}
    \begin{align}\label{thm:convergence.00}
        \begin{aligned}
            \boldsymbol{\mathcal{G}}_{h_{m_n}}^k\!\!\bfv_{h_{m_n}}&\weakto \nabla \bfv&&\quad\text{ in }L^p(\Omega)&&\quad(n\to \infty)\,,\\[-0.5mm]
            \bfv_{h_{m_n}}&\to \bfv&&\quad\text{ in }L^q(\Omega)&&\quad(n\to \infty)\quad q<p^*\,,\\[-0.5mm]
            q_{h_{m_n}}&\weakto q&&\quad\text{ in }L^{p'}(\Omega)&&\quad(n\to \infty)\,,
        \end{aligned}
    \end{align}
    where $p^*\coloneqq \frac{dp}{d-p}$ if $p<d$ and
    $p^*\coloneqq \infty$ if $p\ge d$, and $\bfv\in V(g)$
    solves~Problem~(P) without convective term,~cf.~\eqref{eq:p}, while
    $\smash{(\bfv,q)^\top\in V(g)\times \Qo}$ solves Problem (Q)
    without convective term,
    cf.~\eqref{eq:q1}, \eqref{eq:q2}.
\end{theorem}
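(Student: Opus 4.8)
The plan is to reproduce the proof of Theorem~\ref{thm:convergence} almost verbatim, simply deleting every contribution of the convective term; this simplification is exactly what lets us cover the whole range $p\in(1,\infty)$. First I would invoke Lemma~\ref{lem:nonconform}: for $h_n\to 0$ the spaces $(V_{h_n}^k(0))_{n\in\setN}$ form a non-conforming approximation of $\Vo(0)$ with respect to $(\WDGn)_{n\in\setN}$ and $L^p(\Omega)$. By Lemma~\ref{lem:ldg_stress} the operators $S_{h_n}^k\colon\WDGn\to(\WDGn)^*$ \emph{alone} already satisfy (\hyperlink{AN.1}{AN.1})--(\hyperlink{AN.3}{AN.3}) --- in particular (\hyperlink{AN.2}{AN.2}) is precisely the coercivity estimate \eqref{lem:ldg_stress.4.1} --- and are non-conforming pseudo-monotone with respect to $(V_{h_n}^k(0))_{n\in\setN}$ and $S\colon\Vo(0)\to\Vo(0)^*$, for \emph{every} $p\in(1,\infty)$. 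Combined with Lemma~\ref{lem:rhs} (which verifies (\hyperlink{BN.1}{BN.1}), (\hyperlink{BN.2}{BN.2}) for $f_{h_n}^k$ and $f$), Theorem~\ref{thm:non-conform_lim} applies with $A_n=S_{h_n}^k$ and $A=S$. Part (i) produces, for each $n$, a $\bfu_{h_n}\in V_{h_n}^k(0)$ solving the convective-term-free version of \eqref{eq:op}; part (ii) gives $\sup_n\|\bfu_{h_n}\|_{\nabla,p,h_n}<\infty$; and part (iii) a subsequence with $\bfu_{h_{m_n}}\weakto\bfu$ in $L^p(\Omega)$ and $S\bfu=f$ in $\Vo(0)^*$. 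Setting $\bfv_{h_n}\coloneqq\bfu_{h_n}+\PiDGn\bfv^*$ and $\bfv\coloneqq\bfu+\bfv^*\in V(g)$, the approximation and weak-compactness results \cite[Lemma~A.37, Lemma~A.27]{kr-phi-ldg} together with the discrete Rellich--Kondrachov theorem \cite[Theorem~5.6]{ern-book} yield \eqref{thm:convergence.00}$_{1,2}$ and show that $\bfv$ solves Problem~(P) without convective term.

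Next I would recover the pressure exactly as in Proposition~\ref{prop:well_posed}: the discrete inf--sup condition \eqref{eq:lbb} (\cite[Lemma~6.10]{ern-book}), valid for the pair (discontinuous velocity, continuous pressure), makes $\widetilde{\nabla}_{h_n}^k\colon\Qhnkco\to(\Vhnk(0))^\circ$ bijective, so since $(\textup{id}_{\Vhnk(0)})^*(S_{h_n}^k\bfu_{h_n}-f_{h_n}^k)$ annihilates $\Vhnk(0)$ there is a unique $q_{h_n}\in\Qhnkco$ with $\widetilde{\nabla}_{h_n}^k q_{h_n}=(\textup{id}_{\Vhnk})^*(S_{h_n}^k\bfu_{h_n}-f_{h_n}^k)$. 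The $p$-Stokes analogue of Proposition~\ref{prop:stab} (with $c(\bfv^*)=\|\bfv^*\|_{1,p}^p$), obtained by combining \eqref{lem:ldg_stress.4.1}, the bound \eqref{lem:rhs.2} and \eqref{eq:eqiv0}, shows $(q_{h_n})$ is bounded in $L^{p'}(\Omega)$; hence, up to a further (not relabelled) subsequence, $q_{h_n}\weakto q$ in $L^{p'}(\Omega)$, which is \eqref{thm:convergence.00}$_3$, and $\mean{q}_\Omega=0$, so $q\in\Qo$.

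It remains to identify $(\bfv,q)$ as a solution of Problem~(Q) without convective term. For $\bfz\in\Vo$ put $\bfz_{h_n}\coloneqq\PiDGn\bfz\in\Vhnk$; by the definition of $q_{h_n}$ one has $\bighskp{q_{h_n}}{\Divhnk\bfz_{h_n}}=\bigskp{S_{h_n}^k\bfu_{h_n}-f_{h_n}^k}{\bfz_{h_n}}_{\WDGn}$. Passing to the limit, $\bighskp{q_{h_n}}{\Divhnk\bfz_{h_n}}\to\hskp{q}{\divo\bfz}$ by \eqref{eq:div-dg.0}, the approximation properties of $\PiDGn$ and \eqref{thm:convergence.00}$_3$, while $\bigskp{f_{h_n}^k}{\bfz_{h_n}}_{\WDGn}\to(\bfg,\bfz)+(\bfG,\nabla\bfz)$ by Lemma~\ref{lem:rhs} and \eqref{eq:bb}. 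The only delicate point is $\bigskp{S_{h_n}^k\bfu_{h_n}}{\bfz_{h_n}}_{\WDGn}\to(\SSS(\bfD\bfv),\bfD\bfz)$, which I would establish by the Minty-type argument of \eqref{thm:convergence.6}--\eqref{thm:convergence.9}: testing the convective-term-free operator equation with $\bfu_{h_n}-\PiDGn\bfu$, using the monotonicity of $\SSS$, the boundary-jump estimates (as in \eqref{lem:ldg_stress.7}), $\|\bfu-\PiDGn\bfu\|_{\nabla,p,h_n}\to 0$, $[\bfH_{h_n}^k\bfv^*]^{\textup{sym}}\to\bfD\bfv^*$ and $h_n\|h_n^{-1}\jump{\bfu_{h_n}\otimes\bfn}\|_{p,\Gamma_{h_n}}^p\to 0$, one arrives at $\limsup_{n\to\infty}\bighskp{\SSS(\bfL_{h_n}^{\textup{sym}})-\SSS(\bfD\bfv)}{\bfL_{h_n}^{\textup{sym}}-\bfD\bfv}\le 0$ with $\bfL_{h_n}^{\textup{sym}}\coloneqq\Dhnk\bfu_{h_n}+[\bfH_{h_n}^k\bfv^*]^{\textup{sym}}$. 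By the monotonicity inequality in Assumption~\ref{assum:extra_stress} the integrand is $\ge C_0\,\phi_{\abs{\bfL_{h_n}^{\textup{sym}}}}(\abs{\bfL_{h_n}^{\textup{sym}}-\bfD\bfv})\ge 0$, so this quantity tends to $0$; by the standard theory of $(p,\delta)$-structures (cf.~\cite{die-ett,dr-nafsa}) this forces $\bfL_{h_n}^{\textup{sym}}\to\bfD\bfv$ almost everywhere along a subsequence, hence $\SSS(\bfL_{h_n}^{\textup{sym}})\to\SSS(\bfD\bfv)$ a.e., and together with the uniform $L^{p'}(\Omega)$-bound on $\SSS(\bfL_{h_n}^{\textup{sym}})$ (from \eqref{eq:ass_S} and Remark~\ref{rem:phi_a}) this gives $\SSS(\bfL_{h_n}^{\textup{sym}})\weakto\SSS(\bfD\bfv)$ in $L^{p'}(\Omega)$; pairing this with $\Dhnk\bfz_{h_n}\to\bfD\bfz$ in $L^p(\Omega)$ and sending the stabilisation boundary term to $0$ yields the claim. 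Collecting the three limits gives the convective-term-free version of \eqref{thm:convergence.10}, i.e.\ $(\bfv,q)^\top\in V(g)\times\Qo$ solves Problem~(Q) without convective term.

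The main obstacle --- essentially the \emph{only} place where the $p$-Stokes case genuinely differs from Theorem~\ref{thm:convergence} --- is the subrange $1<p<2$: there the estimate \eqref{thm:convergence.8.1} (which for $p\ge 2$ would upgrade $\bighskp{\SSS(\bfL_{h_n}^{\textup{sym}})-\SSS(\bfD\bfv)}{\bfL_{h_n}^{\textup{sym}}-\bfD\bfv}\to 0$ to strong $L^p$-convergence of $\Dhnk\bfv_{h_n}$) is unavailable, which is precisely why Theorem~\ref{thm:convergence1} asserts only \emph{weak} convergence of the DG gradient. One must therefore settle for the weaker consequence $\int_\Omega\phi_{\abs{\bfL_{h_n}^{\textup{sym}}}}(\abs{\bfL_{h_n}^{\textup{sym}}-\bfD\bfv})\,\textup{d}x\to 0$ and route the passage to the limit in the stress term through almost-everywhere convergence plus the uniform $L^{p'}$-bound (equi-integrability) rather than through a quantitative coercivity estimate; apart from this point the proof is a word-for-word transcription of the $p$-Navier--Stokes argument with all terms stemming from $B_{h_n}^k$ and $B$ simply deleted.
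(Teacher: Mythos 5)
Your proposal is correct and follows essentially the same route as the paper: repeat the proof of Theorem~\ref{thm:convergence} with all convective contributions deleted (so every step works for all $p\in(1,\infty)$), and for $p\le 2$ replace the quantitative estimate \eqref{thm:convergence.8.1} by the chain ``monotonicity pairing $\to 0$ $\Rightarrow$ a.e.\ convergence of $\bfL_{h_n}^{\textup{sym}}$ to $\bfD\bfv$ $\Rightarrow$ $\SSS(\bfL_{h_n}^{\textup{sym}})\weakto\SSS(\bfD\bfv)$ in $L^{p'}(\Omega)$ via the uniform $L^{p'}$-bound'', which is exactly what the paper does by invoking \cite[Lemma 18]{r-cetraro} and \cite[Lemma 1.19]{GGZ}. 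The only cosmetic difference is that you work with the shift $\abs{\bfL_{h_n}^{\textup{sym}}}$ while the paper uses the fixed shift $\abs{\bfD\bfv}$, which is immaterial by the shift-change inequality \eqref{eq:shift}.
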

\begin{proof}
  We proceed exactly as in the proof of Theorem \ref{thm:convergence}
  until \eqref{thm:convergence.8}, noting that, due to the absence of
  the convective term, all arguments work for  $p \in
  (1,\infty)$. For $p>2$, we can finish the proof as before, 
  showing $\boldsymbol{\mathcal{D}}_{h_n}^k\!\bfv_{h_n}\to \bfD\bfv$
  in $L^p(\Omega)$ $(n\to \infty)$. For $p\le 2$, we proceed slightly differently.
  Appealing to Assumption~\ref{assum:extra_stress} and Remark~\ref{rem:phi_a},~it~holds
  {  \begin{align*}
      {  \int_\Omega
      \phi_{\abs{\bfD\bfv}}(\bfL_{h_n}^{\textup{sym}}-\bfD\bfv)\,
      \textup{d}x 
      \leq
      c\bighskp{\SSS(\bfL_{h_n}^{\textup{sym}})-\SSS(\bfD\bfv)}{\bfL_{h_n}^{\textup{sym}}-\bfD\bfv}\to
      0 \quad (n \to \infty)\,.}
    \end{align*}
  }%
  This implies, as in \cite[Lemma 18]{r-cetraro}, that
    $\bfL_{h_n}^{\textup{sym}} \to \bfD\bfv$ a.e.~in $\Omega$, which 
    together with \eqref{eq:ass_S}, \eqref{thm:convergence.1} and the
    classical result in \cite[Lemma 1.19]{GGZ} yields $\SSS(\bfL_{h_n}^{\textup{sym}})\weakto \SSS(\bfD\bfv)$ in
    $\smash{L^{p'}(\Omega)}$ $(n\to\infty)$, so that we conclude as in the
    proof of Theorem \ref{thm:convergence}.
\end{proof}
\section{Numerical experiments}\label{sec:experiments}

In this section, we apply the LDG scheme \eqref{eq:DG} (or \eqref{eq:primal0.1}--\eqref{eq:primal0.2} and  \eqref{eq:primal}) to solve
numerically the system~\eqref{eq:p-navier-stokes}~with $\SSS\colon\mathbb{R}^{d\times d}\to\mathbb{R}^{d\times d}$, for every $\bfA\in\mathbb{R}^{d\times d}$ defined by  ${\SSS(\bfA) \coloneqq (\delta+\vert \bfA^{\textup{sym}}\vert)^{p-2}\bfA^{\textup{sym}}}$,
where $\delta\coloneqq 1\textrm{e}{-}4$ and $ p>2$.
We approximate the discrete solution~${\bfv_h\in V^k_h}$ of the
non-linear problem~\eqref{eq:DG}~deploying the Newton solver from
\mbox{\textsf{PETSc}} (version 3.17.3), cf.~\cite{LW10}, with an
absolute tolerance~of $\tau_{abs}\!=\! 1\textrm{e}{-}8$ and a relative
tolerance of $\tau_{rel}\!=\!1\textrm{e}{-}10$. The linear system
emerging in each Newton step is solved using a sparse direct solver
from \textsf{MUMPS} (version~5.5.0),~cf.~\cite{mumps}. For the
numerical flux \eqref{def:flux-S}, we choose the fixed parameter
$\alpha=2.5$. This choice is in accordance with the choice in
\mbox{\cite[Table~1]{dkrt-ldg}}. In the implementation, the uniqueness
of the pressure is enforced via a zero mean condition. 

All experiments were carried out using the finite element software package~\mbox{\textsf{FEniCS}} (version 2019.1.0), cf.~\cite{LW10}. 

For \hspace{-0.1mm}our \hspace{-0.1mm}numerical \hspace{-0.1mm}experiments, \hspace{-0.1mm}we \hspace{-0.1mm}choose \hspace{-0.1mm}$\Omega\!=\! (-1,1)^2$ \hspace{-0.1mm}and \hspace{-0.1mm}linear~\hspace{-0.1mm}elements,~\hspace{-0.1mm}i.e.,~\hspace{-0.1mm}${k\!=\! 1}$. We \hspace{-0.1mm}choose \hspace{-0.1mm}$\smash{\bfg\!\in\! L^{p'}\!(\Omega)}$, \hspace{-0.1mm}$\smash{\bfG\!\in\! L^{p'}\!(\Omega)}$ \hspace{-0.1mm}and $\hspace{-0.1mm}\smash{\bfv_0\!\in\! W^{1-\frac{1}{p},p}(\partial\Omega)}$~\hspace{-0.1mm}such~\hspace{-0.1mm}that~\hspace{-0.1mm}${\bfv\!\in\! W^{1,p}(\Omega)}$~\hspace{-0.1mm}and~\hspace{-0.1mm}${q\!\in\! \Qo}$, for every $x\coloneqq (x_1,x_2)^\top\in \Omega$ defined by
\begin{align}
	\smash{\bfv(x)\coloneqq \vert x\vert^\beta(x_2,-x_1)^\top, \qquad q(x)\vcentcolon =\vert x\vert^{\gamma}-\langle\,\vert \!\cdot\!\vert^{\gamma}\,\rangle_\Omega}
\end{align}
are a solution of \eqref{eq:p-navier-stokes}\footnote{Note that $g=0$ in this example.\vspace{-7mm}}. For
$\rho\in \{0.01,0.05,0.1\}$, we choose
$\beta=\smash{\frac{2(\rho-1)}{p}}$,~which~yields that
$\vert \nabla^{\rho}\bfF(\bfD\bfv)\vert \in L^2(\Omega)$, and
$\gamma=\rho-\smash{\frac{2}{p'}}$, which
yields~that~$\vert \nabla^{\rho} q\vert\in L^{p'}(\Omega)$, i.e., we
expect a convergence rate of order
$\rho\smash{\frac{p'}{2}}\in (0,\infty)$. Here, $\bfF(\bfA)\coloneqq (\delta+\vert
\bfA^{\textup{sym}}\vert)^{\smash{\frac{p-2}2}}\bfA^{\textup{sym}}$ for all $\bfA\in \mathbb{R}^{d\times d}$.  This setup is motivated by Part II of
the paper (cf.~\cite{kr-pnse-ldg-2}).

We \hspace{-0.1mm}construct \hspace{-0.1mm}a \hspace{-0.1mm}initial \hspace{-0.1mm}triangulation \hspace{-0.1mm}$\mathcal
T_{h_0}$, \hspace{-0.1mm}where \hspace{-0.1mm}$h_0\hspace{-0.2em}=\hspace{-0.2em}\smash{\frac{1}{\sqrt{2}}}$, \hspace{-0.1mm}by \hspace{-0.1mm}subdividing~\hspace{-0.1mm}a~\hspace{-0.1mm}\mbox{rectangular} cartesian grid~into regular triangles with different orientations.  Finer triangulations~$\mathcal T_{h_i}$, $i=1,\dots,5$, where $h_{i+1}=\frac{h_i}{2}$ for all $i=1,\dots,5$, are 
obtained by
regular subdivision of the previous grid: Each \mbox{triangle} is subdivided
into four equal triangles by connecting the midpoints of the edges, i.e., applying the red-refinement rule, cf. \cite[Definition~4.8~(i)]{Ba16}.

Then, for the resulting series of triangulations $\mathcal T_{h_i}$, $i\!=\!1,\dots,5$, we apply~the~above Newton scheme to compute the corresponding numerical solutions $(\bfv_i, \bfL_i, \bfS_i,\bfK_i)^\top\coloneqq \smash{(\bfv_{h_i},\bfL_{h_i}, \bfS_{h_i},\bfK_{h_i})^\top\in V_{h_i}^k\times X_{h_i}^k\times X_{h_i}^k\times X_{h_i}^k}$, $i=1,\dots,5$, 
and the error quantities
\begin{align*}
	\left.\begin{aligned}
		e_{\bfL,i}&\coloneqq \|\bfF(\bfL_i^{\textup{sym}})-\bfF(\bfL^{\textup{sym}})\|_2\,,\\[0.5mm]
		e_{\bfS,i}&\coloneqq \|\bfF^*(\bfS_i)-\bfF^*(\bfS)\|_2\,,\\[0.5mm]
		e_{\jump{},i}&\coloneqq m_{\phi_{\smash{\avg{\abs{\Pi_{h_i}^0\!\bfL_i^{\textup{sym}}}}}},h_i}(\bfv_i-\bfv)^{\smash{\frac{1}{2}}}\,,\\[0.5mm]
		e_{q,i}&\coloneqq (\|q_i-q\|_{p'}^{\smash{p'}})^{\smash{\frac{1}{2}}}\,,
	\end{aligned}\quad\right\}\quad i=1,\dots,5\,,
\end{align*}
where $\bfF^*(\bfA)\coloneqq (\delta^{p-1}+\vert
\bfA^{\textup{sym}}\vert)^{\smash{\frac{p'-2}{2}}}\bfA^{\textup{sym}}$ for all $\bfA\in \mathbb{R}^{d\times d}$. 
As estimation~of~the~conver-gence rates,  the experimental order of convergence~(EOC)
\begin{align*}
	\texttt{EOC}_i(e_i)\coloneqq \frac{\log(e_i/e_{i-1})}{\log(h_i/h_{i-1})}\,, \quad i=1,\dots,5\,,
\end{align*}
where for any $i\!=\! 1,\dots,5$, we denote by $e_i$
either 
$e_{\bfL,i}$ , $e_{\bfS,i}$,
$e_{\jump{},i}$,~or~$e_{q,i}$,~\mbox{resp.},~is~recorded.  For
 \hspace{-0.1mm}$p\!\in\! \{2.2, 2.5, 3, 3.5\}$, \hspace{-0.1mm}$\rho\!\in\! \{0.01,0.05,0.1\}$ \hspace{-0.1mm}and \hspace{-0.1mm}a \hspace{-0.1mm}series \hspace{-0.1mm}of
\hspace{-0.1mm}triangulations~\hspace{-0.1mm}$\mathcal{T}_{h_i}$,~\hspace{-0.1mm}${i\!=\! 1,\dots,5}$, obtained by
regular, global refinement as described above, the EOC is computed and
presented in Table~\ref{tab1}, Table~\ref{tab2}, Table~\ref{tab3} and
Table~\ref{tab4}, resp. In each case, we observe the expected a
convergence~ratio~of about
$\texttt{EOC}_i(e_i)\approx \rho\smash{\frac{p'}{2}}$, $i=1,\dots, 5$,
indicating the convergence of LDG scheme \eqref{eq:DG} (or
\eqref{eq:primal0.1}, \eqref{eq:primal0.2} and \eqref{eq:primal}). We
have~also~carried~out numerical experiments for $\rho =0$. Even in
this case, the error for all quantities decreases and we see a positive but
decreasing very small convergence rate.~This~indicates that the
algorithm converges even in this case, confirming the assertions~of~Theorem~\ref{thm:convergence}.
\enlargethispage{4mm}
\begin{table}[H]
    \setlength\tabcolsep{2pt}
	\centering
	\begin{tabular}{c |c|c|c|c|c|c|c|c|c|c|c|c|} \cmidrule(){1-13}
	\multicolumn{1}{|c||}{\cellcolor{lightgray}$\rho$}	& \multicolumn{4}{c||}{\cellcolor{lightgray}0.01} & \multicolumn{4}{c||}{\cellcolor{lightgray}0.05}    & \multicolumn{4}{c|}{\cellcolor{lightgray}0.1}\\ 
		\hline 
		   
		   \multicolumn{1}{|c||}{\cellcolor{lightgray}\diagbox[height=1.1\line]{\vspace*{-0.6mm}\footnotesize$i$}{\\[-5.5mm]\footnotesize $p$}}
		   & \cellcolor{lightgray}2.2 & \cellcolor{lightgray}2.5  & \cellcolor{lightgray}3.0 & \multicolumn{1}{c||}{\cellcolor{lightgray}3.5} & \multicolumn{1}{c|}{\cellcolor{lightgray}2.2}   & \cellcolor{lightgray}2.5  & \cellcolor{lightgray}3.0 &   \multicolumn{1}{c||}{\cellcolor{lightgray}3.5}     & \multicolumn{1}{c|}{\cellcolor{lightgray}2.2}    & \cellcolor{lightgray}2.5  & \cellcolor{lightgray}3.0 & \cellcolor{lightgray}3.5 \\ \hline\hline
			\multicolumn{1}{|c||}{\cellcolor{lightgray}$1$}                		& 0.0039 & 0.0032 & 0.0045 & \multicolumn{1}{c||}{0.0063} & \multicolumn{1}{c|}{0.041} & 0.037 & 0.035 & \multicolumn{1}{c||}{0.035} & \multicolumn{1}{c|}{0.087} & 0.079 & 0.073 & 0.070 \\ \hline
			\multicolumn{1}{|c||}{\cellcolor{lightgray}$2$}                  	& 0.0106 & 0.0093 & 0.0079 & \multicolumn{1}{c||}{0.0072} & \multicolumn{1}{c|}{0.048} & 0.043 & 0.038 & \multicolumn{1}{c||}{0.035} & \multicolumn{1}{c|}{0.094} & 0.086 & 0.076 & 0.071 \\ \hline
			\multicolumn{1}{|c||}{\cellcolor{lightgray}$3$}                      & 0.0101 & 0.0088 & 0.0077 & \multicolumn{1}{c||}{0.0071} & \multicolumn{1}{c|}{0.048} & 0.043 & 0.038 & \multicolumn{1}{c||}{0.035} & \multicolumn{1}{c|}{0.094} & 0.085 & 0.076 & 0.070 \\ \hline
			\multicolumn{1}{|c||}{\cellcolor{lightgray}$4$}               		& 0.0097 & 0.0086 & 0.0076 & \multicolumn{1}{c||}{0.0071} & \multicolumn{1}{c|}{0.047} & 0.043 & 0.038 & \multicolumn{1}{c||}{0.035} & \multicolumn{1}{c|}{0.094} & 0.085 & 0.076 & 0.070 \\ \hline
			\multicolumn{1}{|c||}{\cellcolor{lightgray}$5$}               		& 0.0095 & 0.0086 & 0.0076 & \multicolumn{1}{c||}{0.0071} & \multicolumn{1}{c|}{0.047} & 0.043 & 0.038 & \multicolumn{1}{c||}{0.035} & \multicolumn{1}{c|}{0.094} & 0.085 & 0.076 & 0.070 \\ \hline\hline
			\multicolumn{1}{|c||}{\cellcolor{lightgray}$\rho\frac{p'}{2}\!$}     & 0.0092 & 0.0083 & 0.0075 & \multicolumn{1}{c||}{0.0070} & \multicolumn{1}{c|}{0.046} & 0.042 & 0.038 & \multicolumn{1}{c||}{0.035} & \multicolumn{1}{c|}{0.092} & 0.083 & 0.075 & 0.070 \\ \hline
	\end{tabular}\vspace{-2mm}
	\caption{\small Experimental order of convergence: $\texttt{EOC}_i(e_{\bfL,i})$,~${i=1,\dots,5}$.}
\label{tab1}
\end{table}\vspace{-8mm}

\begin{table}[H]
    \setlength\tabcolsep{2pt}
	\centering
	\begin{tabular}{c |c|c|c|c|c|c|c|c|c|c|c|c|} \cmidrule(){1-13}
	\multicolumn{1}{|c||}{\cellcolor{lightgray}$\rho$}	& \multicolumn{4}{c||}{\cellcolor{lightgray}0.01} & \multicolumn{4}{c||}{\cellcolor{lightgray}0.05}    & \multicolumn{4}{c|}{\cellcolor{lightgray}0.1}\\ 
		\hline 
		   
		   \multicolumn{1}{|c||}{\cellcolor{lightgray}\diagbox[height=1.1\line]{\vspace*{-0.6mm}\footnotesize$i$}{\\[-5.5mm]\footnotesize $p$}}
		   & \cellcolor{lightgray}2.2 & \cellcolor{lightgray}2.5  & \cellcolor{lightgray}3.0 & \multicolumn{1}{c||}{\cellcolor{lightgray}3.5} & \multicolumn{1}{c|}{\cellcolor{lightgray}2.2}   & \cellcolor{lightgray}2.5  & \cellcolor{lightgray}3.0 &   \multicolumn{1}{c||}{\cellcolor{lightgray}3.5}     & \multicolumn{1}{c|}{\cellcolor{lightgray}2.2}    & \cellcolor{lightgray}2.5  & \cellcolor{lightgray}3.0 & \cellcolor{lightgray}3.5 \\ \hline\hline
			\multicolumn{1}{|c||}{\cellcolor{lightgray}$1$}                		& 0.0259 & 0.0176 & 0.0088 & \multicolumn{1}{c||}{0.0033} & \multicolumn{1}{c|}{0.063} & 0.051 & 0.039 & \multicolumn{1}{c||}{0.031} & \multicolumn{1}{c|}{0.109} & 0.094 & 0.077 & 0.066 \\ \hline
			\multicolumn{1}{|c||}{\cellcolor{lightgray}$2$}                  	& 0.0149 & 0.0103 & 0.0081 & \multicolumn{1}{c||}{0.0070} & \multicolumn{1}{c|}{0.052} & 0.044 & 0.038 & \multicolumn{1}{c||}{0.035} & \multicolumn{1}{c|}{0.098} & 0.086 & 0.076 & 0.070 \\ \hline
			\multicolumn{1}{|c||}{\cellcolor{lightgray}$3$}                      & 0.0117 & 0.0091 & 0.0077 & \multicolumn{1}{c||}{0.0070} & \multicolumn{1}{c|}{0.049} & 0.043 & 0.038 & \multicolumn{1}{c||}{0.035} & \multicolumn{1}{c|}{0.096} & 0.085 & 0.075 & 0.070 \\ \hline
			\multicolumn{1}{|c||}{\cellcolor{lightgray}$4$}               		& 0.0103 & 0.0087 & 0.0076 & \multicolumn{1}{c||}{0.0070} & \multicolumn{1}{c|}{0.048} & 0.042 & 0.038 & \multicolumn{1}{c||}{0.035} & \multicolumn{1}{c|}{0.094} & 0.085 & 0.075 & 0.070 \\ \hline
			\multicolumn{1}{|c||}{\cellcolor{lightgray}$5$}               		& 0.0098 & 0.0085 & 0.0075 & \multicolumn{1}{c||}{0.0070} & \multicolumn{1}{c|}{0.047} & 0.042 & 0.038 & \multicolumn{1}{c||}{0.035} & \multicolumn{1}{c|}{0.094} & 0.085 & 0.075 & 0.070 \\ \hline\hline
			\multicolumn{1}{|c||}{\cellcolor{lightgray}$\rho\frac{p'}{2}\!$}       & 0.0092 & 0.0083 & 0.0075 & \multicolumn{1}{c||}{0.0070} & \multicolumn{1}{c|}{0.046} & 0.042 & 0.038 & \multicolumn{1}{c||}{0.035} & \multicolumn{1}{c|}{0.092} & 0.083 & 0.075 & 0.070 \\ \hline
	\end{tabular}\vspace{-2mm}
	\caption{\small Experimental order of convergence: $\texttt{EOC}_i(e_{\jump{},i})$,~${i=1,\dots,5}$.} 
	\label{tab2}
\end{table}\vspace{-8mm}

 \begin{table}[H]
     \setlength\tabcolsep{2pt}
 	\centering
 	\begin{tabular}{c |c|c|c|c|c|c|c|c|c|c|c|c|} \cmidrule(){1-13}
	\multicolumn{1}{|c||}{\cellcolor{lightgray}$\rho$}	& \multicolumn{4}{c||}{\cellcolor{lightgray}0.01} & \multicolumn{4}{c||}{\cellcolor{lightgray}0.05}    & \multicolumn{4}{c|}{\cellcolor{lightgray}0.1}\\ 
		\hline 
		   
		   \multicolumn{1}{|c||}{\cellcolor{lightgray}\diagbox[height=1.1\line]{\vspace*{-0.6mm}\footnotesize$i$}{\\[-5.5mm]\footnotesize $p$}}
		   & \cellcolor{lightgray}2.2 & \cellcolor{lightgray}2.5  & \cellcolor{lightgray}3.0 & \multicolumn{1}{c||}{\cellcolor{lightgray}3.5} & \multicolumn{1}{c|}{\cellcolor{lightgray}2.2}   & \cellcolor{lightgray}2.5  & \cellcolor{lightgray}3.0 &   \multicolumn{1}{c||}{\cellcolor{lightgray}3.5}     & \multicolumn{1}{c|}{\cellcolor{lightgray}2.2}    & \cellcolor{lightgray}2.5  & \cellcolor{lightgray}3.0 & \cellcolor{lightgray}3.5 \\ \hline\hline
 			\multicolumn{1}{|c||}{\cellcolor{lightgray}$1$}                		& 0.0041 & 0.0176 & 0.0055 & \multicolumn{1}{c||}{0.0079} & \multicolumn{1}{c|}{0.041} & 0.037 & 0.036 & \multicolumn{1}{c||}{0.036} & \multicolumn{1}{c|}{0.087} & 0.080 & 0.074 & 0.071 \\ \hline
 			\multicolumn{1}{|c||}{\cellcolor{lightgray}$2$}                  	& 0.0106 & 0.0103 & 0.0079 & \multicolumn{1}{c||}{0.0073} & \multicolumn{1}{c|}{0.048} & 0.043 & 0.038 & \multicolumn{1}{c||}{0.035} & \multicolumn{1}{c|}{0.095} & 0.086 & 0.076 & 0.071 \\ \hline
 			\multicolumn{1}{|c||}{\cellcolor{lightgray}$3$}                      & 0.0101 & 0.0091 & 0.0077 & \multicolumn{1}{c||}{0.0071} & \multicolumn{1}{c|}{0.048} & 0.043 & 0.038 & \multicolumn{1}{c||}{0.035} & \multicolumn{1}{c|}{0.094} & 0.085 & 0.076 & 0.070 \\ \hline
 			\multicolumn{1}{|c||}{\cellcolor{lightgray}$4$}               		& 0.0097 & 0.0087 & 0.0076 & \multicolumn{1}{c||}{0.0071} & \multicolumn{1}{c|}{0.047} & 0.043 & 0.038 & \multicolumn{1}{c||}{0.035} & \multicolumn{1}{c|}{0.094} & 0.085 & 0.076 & 0.070 \\ \hline
 			\multicolumn{1}{|c||}{\cellcolor{lightgray}$5$}               		& 0.0095 & 0.0085 & 0.0076 & \multicolumn{1}{c||}{0.0071} & \multicolumn{1}{c|}{0.047} & 0.043 & 0.038 & \multicolumn{1}{c||}{0.035} & \multicolumn{1}{c|}{0.094} & 0.085 & 0.076 & 0.070 \\ \hline\hline
 			\multicolumn{1}{|c||}{\cellcolor{lightgray}$\rho\frac{p'}{2}\!$}       & 0.0092 & 0.0083 & 0.0075 & \multicolumn{1}{c||}{0.0070} & \multicolumn{1}{c|}{0.046} & 0.042 & 0.038 & \multicolumn{1}{c||}{0.035} & \multicolumn{1}{c|}{0.092} & 0.083 & 0.075 & 0.070 \\ \hline
 	\end{tabular}\vspace{-2mm}
 	\caption{\small Experimental order of convergence: $\texttt{EOC}_i(e_{\bfS,i})$,~${i=1,\dots,5}$.} 
 	\label{tab3}
 \end{table}\vspace{-1cm}

\begin{table}[H]
    \setlength\tabcolsep{2pt}
	\centering
	\begin{tabular}{c |c|c|c|c|c|c|c|c|c|c|c|c|} \cmidrule(){1-13}
	\multicolumn{1}{|c||}{\cellcolor{lightgray}$\rho$}	& \multicolumn{4}{c||}{\cellcolor{lightgray}0.01} & \multicolumn{4}{c||}{\cellcolor{lightgray}0.05}    & \multicolumn{4}{c|}{\cellcolor{lightgray}0.1}\\ 
		\hline 
		   
		   \multicolumn{1}{|c||}{\cellcolor{lightgray}\diagbox[height=1.1\line]{\vspace*{-0.6mm}\footnotesize$i$}{\\[-5.5mm]\footnotesize $p$}}
		   & \cellcolor{lightgray}2.2 & \cellcolor{lightgray}2.5  & \cellcolor{lightgray}3.0 & \multicolumn{1}{c||}{\cellcolor{lightgray}3.5} & \multicolumn{1}{c|}{\cellcolor{lightgray}2.2}   & \cellcolor{lightgray}2.5  & \cellcolor{lightgray}3.0 &   \multicolumn{1}{c||}{\cellcolor{lightgray}3.5}     & \multicolumn{1}{c|}{\cellcolor{lightgray}2.2}    & \cellcolor{lightgray}2.5  & \cellcolor{lightgray}3.0 & \cellcolor{lightgray}3.5 \\ \hline\hline
			\multicolumn{1}{|c||}{\cellcolor{lightgray}$1$}                	& 0.0292 & 0.0290 & 0.0344 & \multicolumn{1}{c||}{0.0438} & \multicolumn{1}{c|}{0.066} & 0.062 & 0.064 & \multicolumn{1}{c||}{0.071} & \multicolumn{1}{c|}{0.111} & 0.104 & 0.101 & 0.106 \\ \hline     
			\multicolumn{1}{|c||}{\cellcolor{lightgray}$2$}                  & 0.0159 & 0.0121 & 0.0090 & \multicolumn{1}{c||}{0.0074} & \multicolumn{1}{c|}{0.052} & 0.045 & 0.039 & \multicolumn{1}{c||}{0.036} & \multicolumn{1}{c|}{0.098} & 0.087 & 0.077 & 0.071 \\ \hline   
			\multicolumn{1}{|c||}{\cellcolor{lightgray}$3$}                  & 0.0118 & 0.0098 & 0.0084 & \multicolumn{1}{c||}{0.0079} & \multicolumn{1}{c|}{0.048} & 0.043 & 0.039 & \multicolumn{1}{c||}{0.036} & \multicolumn{1}{c|}{0.094} & 0.085 & 0.076 & 0.071 \\ \hline   
			\multicolumn{1}{|c||}{\cellcolor{lightgray}$4$}               	& 0.0102 & 0.0088 & 0.0079 & \multicolumn{1}{c||}{0.0077} & \multicolumn{1}{c|}{0.047} & 0.042 & 0.038 & \multicolumn{1}{c||}{0.036} & \multicolumn{1}{c|}{0.093} & 0.084 & 0.076 & 0.071 \\ \hline  
			\multicolumn{1}{|c||}{\cellcolor{lightgray}$5$}               	& 0.0096 & 0.0085 & 0.0076 & \multicolumn{1}{c||}{0.0074} & \multicolumn{1}{c|}{0.046} & 0.042 & 0.038 & \multicolumn{1}{c||}{0.035} & \multicolumn{1}{c|}{0.092} & 0.084 & 0.075 & 0.070 \\ \hline\hline
			\multicolumn{1}{|c||}{\cellcolor{lightgray}$\rho\frac{p'}{2}\!$}   & 0.0092 & 0.0083 & 0.0075 & \multicolumn{1}{c||}{0.0070} & \multicolumn{1}{c|}{0.046} & 0.042 & 0.038 & \multicolumn{1}{c||}{0.035} & \multicolumn{1}{c|}{0.092} & 0.083 & 0.075 & 0.070 \\ \hline
	\end{tabular}\vspace{-2mm}
	\caption{\small Experimental order of convergence: $\texttt{EOC}_i(e_{q,i})$,~${i=1,\dots,5}$.}
	\label{tab4}
\end{table}\vspace{-1cm}

\def\cprime{$'$} \def\cprime{$'$} \def\cprime{$'$}

\end{document}